\documentclass[letterpaper,11pt]{amsart}
\usepackage{amsmath,amssymb,amsthm,pinlabel,tikz,hyperref,mathrsfs,color, thmtools}
\usepackage{verbatim}
\usepackage{float}
\usepackage{caption}
\usepackage{subcaption}
\usepackage{enumitem}

\newcommand{\nc}{\newcommand}
\nc{\dmo}{\DeclareMathOperator}
\dmo{\ra}{\rightarrow}
\dmo{\Prob}{\mathbb{P}}
\dmo{\E}{\mathbb{E}}
\dmo{\N}{\mathbb{N}}
\dmo{\Z}{\mathbb{Z}}
\dmo{\Q}{\mathbb{Q}}
\dmo{\R}{\mathbb{R}}
\dmo{\C}{\mathcal{C}}
\dmo{\X}{\mathcal{X}}
\dmo{\U}{\mathcal{U}}
\dmo{\T}{\mathcal{T}}
\dmo{\F}{\mathcal{F}}
\dmo{\AC}{\mathcal{AC}}
\dmo{\w}{\omega}
\dmo{\MIN}{\mathcal{MIN}}
\dmo{\Mod}{Mod}
\dmo{\PMod}{PMod}
\dmo{\PMF}{\mathcal{PMF}}
\dmo{\Mat}{Mat}
\dmo{\supp}{supp}
\dmo{\UE}{\mathcal{UE}}
\dmo{\vol}{vol}
\dmo{\B}{B}
\dmo{\PB}{PB}
\dmo{\PR}{PSL(2,\mathbb{R})}
\dmo{\GL}{GL(k, \mathbb{C})}
\dmo{\SL}{SL(2, \mathbb{Z})}
\dmo{\Isom}{Isom}
\dmo{\RP}{\mathbb{R} \mathrm{P}}
\dmo{\I}{\mathcal{I}}
\dmo{\el}{\ell_{\C}}
\dmo{\NN}{\mathcal{N}}
\dmo{\rk}{rank}
\dmo{\tr}{tr}
\dmo{\llangle}{\langle\langle}
\dmo{\rrangle}{\rangle\rangle}
\dmo{\Unif}{Unif}
\dmo{\Out}{Out}

\usetikzlibrary{decorations.markings, patterns}
\tikzset{->-/.style={decoration={
  markings,
  mark=at position #1 with {\arrow{>}}},postaction={decorate}}}

\nc{\nt}{\newtheorem}

\nt{theorem}{Theorem}

\newtheorem{thm}{{\bf Theorem}}[section]
\newtheorem{lem}[thm]{{\bf Lemma}}
\newtheorem{cor}[thm]{{\bf Corollary}}
\newtheorem{prop}[thm]{{\bf Proposition}}

\newtheorem{property}[thm]{Property}
\newtheorem{claim}[thm]{Claim} 
\newtheorem{remark}[thm]{Remark}

\newtheorem{definition}[thm]{Definition}
\numberwithin{equation}{section}

\title[Translation lengths of random hyperbolic isometries]{Linear growth of translation lengths of random isometries on Gromov hyperbolic spaces and Teichm{\"u}ller spaces}

\date{\today}
\author{Hyungryul Baik}
\address{%
		Department of Mathematical Sciences, KAIST\\
		291 Daehak-ro Yuseong-gu, Daejeon, 34141, South Korea 
}
\email{%
        hrbaik@kaist.ac.kr
}

\author{Inhyeok Choi}

\address{%
		Department of Mathematical Sciences, KAIST\\
		291 Daehak-ro Yuseong-gu, Daejeon, 34141, South Korea
}
\email{%
        inhyeokchoi48@gmail.com
        }

\author{Dongryul M. Kim}

\address{%
	Department of Mathematics, Yale University\\
	219 Prospect Street, New Haven, CT 06511, USA
}
\email{%
	dongryul.kim@yale.edu
}

\begin{document}
\begin{abstract}
	We investigate the translation lengths of group elements that arise in random walks on the isometry groups of Gromov hyperbolic spaces. In particular, without any moment condition, we prove that non-elementary random walks exhibit at least linear growth of translation lengths.  As a corollary, almost every random walk on mapping class groups eventually becomes pseudo-Anosov and almost every random walk on $\mathrm{Out}(F_n)$ eventually becomes fully irreducible. If the underlying measure further has finite first moment, then the growth rate of translation lengths is equal to the drift, the escape rate of the random walk.
	
	We then apply our technique to investigate the random walks induced by the action of mapping class groups on Teichm{\"u}ller spaces. In particular, we prove the spectral theorem under finite first moment condition, generalizing a result of Dahmani and Horbez.

\noindent{\bf Keywords.} Random walk, Gromov hyperbolic space, spectral theorem, Teichm{\"u}ller space, weakly hyperbolic group

\noindent{\bf MSC classes:} 20F67, 30F60, 57M60, 60G50
\end{abstract}

\maketitle

%
%

\section{Introduction} \label{sec:introduction}

Throughout, $X$ denotes either a separable geodesic Gromov hyperbolic space or the Teichm{\"u}ller space of a closed orientable surface of genus at least two, which is equipped with the Teichm\"uller metric. $G$ denotes a countable subgroup of $\Isom(X)$ containing a pair of independent hyperbolic isometries\footnote{A pair of independent hyperbolic isometries mean two hyperbolic isometries (pseudo-Anosovs on the surface when $X$ is the Teichm\"uller space) with disjoint set of fixed points on $\partial X$.}. When $X$ is a Gromov hyperbolic space, following Maher and Tiozzo \cite{maher2018random}, we call $G$ a \emph{weakly hyperbolic group}. $\mu$ denotes a non-elementary probability measure on $G$, i.e., a measure on $G$ such that the semigroup generated by the support of $\mu$ contains independent hyperbolic isometries. $\Prob$ stands for the probability measure for random walks induced by $\mu$. See Section \ref{sec:RW} for details.

Random walks on the isometry group of Gromov hyperbolic spaces or Teichm{\"u}ller spaces have been studied in depth for several decades. For example, Kaimanovich investigated random walks on hyperbolic groups and semi-simple Lie groups in \cite{Kaimanovich2000hyp}, identifying the Poisson boundary with other natural boundaries such as the Gromov boundary or the Furstenberg boundary, under some moment and entropy conditions on the measure. See also \cite{ledrappier2001free} for the behavior of random walks on free groups with general conditions. A generalization to relatively hyperbolic groups was considered by Gautero and Math{\'e}us in \cite{gautero2012tree}. This was further generalized to weakly hyperbolic groups by Maher and Tiozzo in \cite{maher2018random}.

In the course of characterizing the Poisson boundary, Maher and Tiozzo observed the following phenomenon.

\begin{thm}[{\cite[Theorem 1.2, 1.4]{maher2018random}}] \label{thm:maherTiozzo}
Let $\omega$ be the random walk generated by $\mu$. Then there exists a constant $L>0$ such that \[
\liminf_{n \rightarrow \infty} \frac{d_{X}(x_{0}, \w_{n}x_{0})}{n} \ge L
\]
for $\Prob$-almost every (a.e. in short) sample path $(\w_{n})$. Moreover, the translation length $\tau(\w_{n})$ of $\w_{n}$ grows at least linearly, i.e., \[
\Prob(\tau(\w_{n}) \le L'n) \rightarrow 0 \quad \mbox{as} \quad n \rightarrow \infty
\]
for some constant $L'>0$.
\end{thm}

Maher and Tiozzo proved these results for weakly hyperbolic groups, which implies the results for Teichm{\"u}ller space thanks to the coarsely Lipschitz systole map from Teichm{\"u}ller space to the curve complex. One can also refer to the earlier work \cite{ledrappier2001free} of Ledrappier that proves similar results with considerations on harmonic measures in the case of free groups.

If $\mu$ further has finite first moment, Kingman's subadditive ergodic theorem provides a constant $\lambda$ such that $\lim_{n \rightarrow \infty} d_{X}(x_{0}, \w_{n}x_{0})/n = \lambda$ for $\Prob$-a.e. $(\w_{n})$. Here $\lambda$ is called the \emph{escape rate} or the \emph{drift} of the random walk. One consequence of Theorem \ref{thm:maherTiozzo} is that the drift of a non-elementary random walk on weakly hyperbolic groups is strictly positive.

As presented in Theorem \ref{thm:maherTiozzo}, due to the lack of subadditivity, translation length is more difficult to investigate than displacement. Hence, while the growth of displacements is given \emph{almost surely}, the growth of translation lengths is given \emph{in probability}. Maher and Tiozzo also proved in \cite{maher2018random} \emph{almost sure linear growth}, i.e., \[
\liminf_{n\rightarrow \infty} \tau(\w_{n})\ge L'n \quad \mbox{for} \quad \Prob\mbox{-a.e.} (\w_n),
\]
when the support of $\mu$ is bounded. This relies on the exponential decay of shadows explained in \cite{maher2012exp}. 

Amongst weakly hyperbolic groups are mapping class groups and $\Out(F_{n})$ that act on the corresponding curve complexes or the complexes of free factors, respectively. We remark that before the work \cite{maher2018random} of Maher and Tiozzo, Maher proved in \cite{maher2011random} that random element of the mapping class group becomes pseudo-Anosov \emph{in probability}. 

Previously known results about the growth of translation lengths rely on moment conditions that require $\mu$ to have bounded support, finite exponential moment or finite second moment. One of our purposes is to study the growth of translation lengths without any moment condition. With the finite first moment condition (so that the drift is available), one can describe this growth with greater precision. Our first main theorem is as follows.

\begin{restatable}{theorem}{mainthm} \label{thm:A}
Let $G$ be a weakly hyperbolic group acting on a Gromov hyperbolic space $X$, and $\mu$ be a non-elementary probability measure on $G$. Then $\Prob$-almost every sample path shows at least linear growth of translation lengths. More precisely, there exists a constant $\mathcal{L}>0$ such that for $\Prob$-a.e. $(\w_{n})$, $\tau(\w_{n}) \ge \mathcal{L}n$ for sufficiently large $n$.

Moreover, if $\mu$ further has finite first moment, then for $\Prob$-a.e. $(\w_{n})$ we have \[
\lim_{n \rightarrow \infty} \frac{\tau(\w_{n})}{n} = \lambda,
\]
where $\lambda$ is the drift of the random walk.
\end{restatable}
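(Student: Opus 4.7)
\emph{The plan.} My approach is to reduce Theorem~\ref{thm:A} to a single probabilistic estimate on a Gromov product along the sample path. The starting point is the standard inequality in a $\delta$-hyperbolic space: for any isometry $g$ of $X$ and basepoint $x_0$,
\[
\tau(g) \;\ge\; d(x_0, g x_0) \;-\; 2\,(x_0 \mid g^{2} x_0)_{g x_0} \;-\; C\delta,
\]
valid once $d(x_0, g x_0)$ exceeds a multiple of $\delta$. By $g$-equivariance $(x_0 \mid g^{2} x_0)_{g x_0} = (g^{-1} x_0 \mid g x_0)_{x_0}$, and applying the inequality with $g = \omega_n$ reduces the whole theorem to the claim
\[
(\ast) \qquad (\omega_n^{-1} x_0 \mid \omega_n x_0)_{x_0} = o(n) \quad \text{almost surely.}
\]
Granting $(\ast)$: the a.s.\ linear growth statement follows from the lower bound $d(x_0, \omega_n x_0)/n \ge L$ of Theorem~\ref{thm:maherTiozzo}. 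Under finite first moment, the matching upper bound $\tau(\omega_n) \le d(x_0, \omega_n x_0)$ is automatic from subadditivity of $n \mapsto d(x_0, g^{n} x_0)$, so Kingman's subadditive ergodic theorem gives $\limsup \tau(\omega_n)/n \le \lambda$, while $(\ast)$ upgrades the lower bound to $\liminf \tau(\omega_n)/n \ge \lambda$, forcing the limit to equal the drift.

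\emph{Controlling the Gromov product.} I would in fact aim at the stronger statement that $(\omega_n^{-1} x_0 \mid \omega_n x_0)_{x_0}$ is a.s.\ bounded in $n$. By the Maher--Tiozzo convergence theorem, $\omega_n x_0$ converges a.s.\ in the Gromov bordification to a boundary point $\omega_+$; since the reflected measure $\check\mu$ is again non-elementary, an analogous application gives $\omega_n^{-1} x_0 \to \omega_-$ almost surely. Provided $\omega_+ \ne \omega_-$ a.s., near-continuity of the Gromov product on the compactification yields
\[
\limsup_n (\omega_n^{-1} x_0 \mid \omega_n x_0)_{x_0} \;\le\; (\omega_- \mid \omega_+)_{x_0} + O(\delta) < \infty.
\]
To prove $\omega_+ \ne \omega_-$ a.s., I would pass to a bilateral extension $(g_n)_{n \in \mathbb{Z}}$ of the i.i.d.\ driving sequence. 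There, the forward and backward hitting points are measurable with respect to disjoint $\sigma$-algebras and are therefore jointly distributed as $\nu_\mu \otimes \nu_{\check\mu}$; the diagonal of $\partial X \times \partial X$ has zero measure under this product, since both harmonic measures are non-atomic by non-elementarity \cite{maher2018random}. A transfer back to the unilateral setup, using that $\omega_n^{-1}$ has the same law as the $\check\mu$-walk at time $n$, then yields the claim.

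\emph{Main obstacle and the Teichm\"uller case.} The delicate step is precisely the bilateral-to-unilateral transfer of distinctness: the functionals $\omega_+$ and $\omega_-$ on the unilateral sample space are both determined by the same sequence $(g_n)_{n \ge 1}$, so they are not independent, and the argument must be routed through a stationary coupling rather than appealing to independence directly. This is where I expect the bulk of the technical work, especially since no moment condition on $\mu$ is permitted, so exponential decay of shadows is unavailable. A second issue concerns the extension to Teichm\"uller space, which is not Gromov hyperbolic: there the basic inequality must be replaced by an analogue obtained by passing to the curve complex (where hyperbolicity does hold) and then controlling the gap between curve-complex and Teichm\"uller displacements via a thick-part / Rafi-style analysis. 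The probabilistic inputs on $\mathcal{PMF}$, namely convergence of sample paths and non-atomicity of the harmonic measure, are classical, so the remaining work for the spectral theorem promised in the abstract is to stitch them together with the hyperbolic-side argument.
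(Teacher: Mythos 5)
Your reduction of Theorem~\ref{thm:A} to controlling $(\omega_n^{-1}x_0,\omega_n x_0)_{x_0}$ via Corollary~\ref{cor:translationlength}, and the use of Kingman's theorem for the matching upper bound $\limsup\tau(\omega_n)/n\le\lambda$ under finite first moment, are correct and do match the paper's framing. The gap is in the claim that $\omega_n^{-1}x_0$ converges a.s.\ to a boundary point $\omega_-$. It does not. Writing $\omega_n^{-1}=g_n^{-1}g_{n-1}^{-1}\cdots g_1^{-1}$, each new increment is multiplied on the \emph{left}, so the coarse direction of $\omega_n^{-1}x_0$ as seen from $x_0$ is governed by $g_n^{-1}$, a fresh i.i.d.\ sample at every step. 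Concretely, in $F_2$ with $\mu$ uniform on the four generators, the first letter of the reduced word for $\omega_n^{-1}$ is $g_n^{-1}$ whenever it does not cancel against $g_{n-1}^{-1}$, so it changes its value infinitely often; the sequence has no boundary limit. The Maher--Tiozzo convergence theorem applies to the right $\check\mu$-walk $\check\omega_n=g_1^{-1}g_2^{-1}\cdots g_n^{-1}$, a different process. Your remark that $\omega_n^{-1}$ and $\check\omega_n$ have the same law for each fixed $n$ is true but does not give convergence of the sequence $(\omega_n^{-1}x_0)_n$, and in fact your strengthened goal that $(\omega_n^{-1}x_0,\omega_nx_0)_{x_0}$ be a.s.\ bounded is false: in the $F_2$ example one can check that the $\limsup$ of the common-prefix length of $\omega_n^{-1}$ and $\omega_n$ is almost surely infinite (it grows at least logarithmically in $n$). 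The weaker target $(\ast)$, sublinearity of the Gromov product, is the right thing to aim for and is indeed equivalent to the theorem up to the Kingman step, but it cannot be reached by the boundary-convergence route you propose, so the issue you flag as the ``main obstacle'' (transferring distinctness) is downstream of a convergence statement that already fails.

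The paper instead argues combinatorially and never asserts any a.s.\ control on $(\omega_n^{-1}x_0,\omega_nx_0)_{x_0}$ directly. Non-atomicity of the harmonic measure together with the bilateral walk --- the same ingredient behind your $\omega_+\ne\omega_-$ idea, but applied to $\lim_{n\to+\infty}\omega_nx_0$ and $\lim_{n\to-\infty}\omega_nx_0$, which genuinely are independent --- yields that a.e.\ sample path carries linearly many \emph{persistent joints}. Then, given such a path, if $\tau(\omega_n)$ were small, Corollary~\ref{cor:translationlength} would force $(\omega_n^{-1}x_0,\omega_nx_0)_{x_0}$ to be close to $\tfrac12 d(x_0,\omega_nx_0)$; swapping the letter type at a choice of $N\asymp n$ joints (the pivoting of Lemmas~\ref{claim:step1} and~\ref{claim:step2}) produces $2^N$ distinct words, each with at least the conditional probability of the original inside its pivoting class, and all but the original with large translation length. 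Disjointness of the pivoting classes then yields $\Prob(G_n)\le 2P^N$ with $0<P<1$, and Borel--Cantelli gives the a.s.\ statement without any moment assumption. That counting step is precisely what replaces the unavailable exponential decay of shadows, and it is the piece your proposal is missing.
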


As a corollary, \emph{almost every} sample path of non-elementary random walks on mapping class groups or $\Out(F_{n})$ become pseudo-Anosov or fully irreducible, respectively. Here, mapping class groups are acting on the curve complexes and $\Out(F_{n})$ are acting on the complexes of free factors; see \cite{masur1999curve}, \cite{masur2000curve}, \cite{bestvina2014hyperbolicity}. In addition to fully irreducibility, another notion that captures the loxodromic property of an outer automorphism is that of atoroidality. Using the action of $\Out(F_{n})$ on hyperbolic spaces such as Dowdall-Taylor's co-surface graph \cite{dowdall2017the-co-surface} or Brian Mann's intersection graph \cite{mann2014some}, one can even deduce the genericity of fully irreducible atoroidal outer automorphisms in $\Out(F_{n})$.

\begin{cor}[Eventually pseudo-Anosov behavior]
	Let $S$ be a closed orientable surface of genus at least 2 and $\Mod(S)$ be its mapping class group. Let $\mu$ be a non-elementary measure on $\Mod(S)$. Then for $\Prob$-a.e. sample path $(\w_n)$ of the random walk generated by $\mu$, there exists $N > 0$ such that $\w_n$ is pseudo-Anosov for all $n > N$.
\end{cor}

\begin{cor}[Spectral theorem for $\Mod(S)$ on the curve complex]
	Let $S$ be a closed orientable surface of genus at least 2 and $\Mod(S)$ be its mapping class group. Let $\mu$ be a non-elementary measure on $\Mod(S)$ having finite first moment on the curve complex and let $\lambda > 0$ be its drift. Then for $\Prob$-a.e. $(\w_n)$, we have$$\lim_{n \to \infty} {1 \over n} \tau(\w_n) = \lambda.$$
\end{cor}

We record a relevant work \cite{erlandsson2020pA} regarding the genericity of pseudo-Anosov elements in $\Mod(S)$.

Recently, there has been a remarkable progress on the study of the large deviation principle for random walks. In \cite{boulanger2022large}, Boulanger, Mathieu, Sert, and Sisto showed the large deviation property of the displacement and translation length of non-elementary random walks on weakly hyperbolic groups, under finite exponential moment condition. Also investigated are the properties of the deviation function. Their result is related to the large deviation principle of spectral radii of some random matrix products; see \cite{aoun2021law-of-large} for this perspective.

Recently, Gou{\"e}zel proved the exponential error bounds for non-elementary random walks on Gromov hyperbolic spaces without moment condition in \cite{gouezel2022exponential}. His strategy shares the philosophy of pivoting described in the present article, but their aim differs from ours. A similar technique appears in \cite{haissinsky2018renewal}, where the authors aim to deal with surface groups and central limit theorems.


Meanwhile, mapping class groups also act on Teichm{\"u}ller spaces equipped with the Teichm{\"u}ller metric, which are not Gromov hyperbolic in general (\cite{masur1995teichmuller}, \cite{lenzhen2008teichmuller}, \cite{masur1975teich}, \cite{mccarthy1989dynamics}, \cite{ivanov2002short}). Thus, random walks on Teichm{\"u}ller spaces are of independent interest. Kaimanovich and Masur \cite{kaimanovich1996poisson} proposed a way to investigate random walks on Teichm{\"u}ller spaces and related them with their Poisson boundary. Meanwhile, Dahmani and Horbez proved in \cite{dahmani2018spectral} the spectral theorem for mapping class groups with respect to the Teichm{\"u}ller metric under finite second moment condition. Their strategy is to lift the deviation of random paths on the curve complex to Teichm{\"u}ller setting. In \cite{baik2022topological}, the authors applied these techniques to establish the spectral theorems for free subgroups of $\Mod(S)$ generated by two multitwists, under finite second moment condition.


Our next main theorem generalizes the result of Dahmani and Horbez in  \cite{dahmani2018spectral}. 

\begin{restatable}{theorem}{mainthmTeich} \label{thm:B}
Let $S$ be a closed orientable surface of genus at least 2. Let $G=\Mod(S)$ be its mapping class group, $X=\T(S)$ be its Teichm{\"u}ller space equipped with the Teichm{\"u}ller metric, and $\mu$ be a non-elementary probability measure on $\Mod(S)$. Then $\Prob$-a.e. sample path shows at least linear growth of translation lengths. More precisely, there exists a constant $\mathcal{L}>0$ such that for $\Prob$-a.e. $(\w_{n})$, $\tau(\w_{n}) \ge \mathcal{L}n$ for sufficiently large $n$.

Moreover, if $\mu$ further has finite first moment with respect to the Teichm{\"u}ller metric, then for $\Prob$-a.e. $(\w_{n})$ we have \[
\lim_{n \rightarrow \infty} \frac{\tau(\w_{n})}{n} = \lambda
\] where $\lambda>0$ is the drift.
\end{restatable}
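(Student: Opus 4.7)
The plan is to adapt the pivoting strategy underlying Theorem \ref{thm:A} to the non-hyperbolic setting of $\T(S)$ by routing the random walk through the curve complex $\C(S)$, which is Gromov hyperbolic by Masur--Minsky and on which $\Mod(S)$ acts non-elementarily. This routing lets us inherit the moment-free pivotal structure of Theorem \ref{thm:A} for free, after which the remaining task is to translate the pivotal data back to $\T(S)$.

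For the moment-free linear lower bound, I would apply Theorem \ref{thm:A} to the action on $\C(S)$ to obtain $\tau_{\C}(\w_n) \ge L n$ almost surely for sufficiently large $n$. By Masur--Minsky, any mapping class with positive curve-complex translation length is pseudo-Anosov, so $\w_n$ is eventually pseudo-Anosov along $\Prob$-almost every sample path. Since the systole projection $\pi \colon \T(S) \to \C(S)$ is $\Mod(S)$-equivariant and coarsely Lipschitz, the bound $d_{\C}(\pi(x_0), \pi(f^n x_0)) \le K d_{\T}(x_0, f^n x_0) + C$ passes to the limit and yields $\tau_{\C}(f) \le K \cdot \tau_{\T}(f)$ for every $f \in \Mod(S)$. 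This immediately upgrades the curve-complex lower bound to $\tau_{\T}(\w_n) \ge (L/K) n$, giving the first assertion of Theorem \ref{thm:B} with $\mathcal{L} = L/K$.

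For the spectral theorem under the finite first moment assumption, the upper bound $\limsup \tau_{\T}(\w_n)/n \le \lambda$ is immediate from $\tau_{\T}(g) \le d_{\T}(x_0, g x_0)$ combined with Kingman's subadditive ergodic theorem applied to $n \mapsto d_{\T}(x_0, \w_n x_0)$. The matching lower bound rests on the axis inequality $\tau_{\T}(f) \ge d_{\T}(x_0, fx_0) - 2 d_{\T}(x_0, A_f)$, valid for any pseudo-Anosov $f$ with Teichm\"uller axis $A_f$; together with Kingman it reduces the problem to showing $d_{\T}(x_0, A_{\w_n})/n \to 0$ almost surely. By Kaimanovich--Masur, the forward and backward walks almost surely converge in $\PMF(S)$ to transverse, uniquely ergodic limits $\xi^+$ and $\xi^-$. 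A pivoting argument analogous to Theorem \ref{thm:A}, executed on $\C(S)$ and lifted to $\T(S)$ via Rafi's distance formula, should then force the attracting and repelling fixed points $\nu^\pm(\w_n)$ of the pseudo-Anosov $\w_n$ to converge to $\xi^{\pm}$ and the axis $A_{\w_n}$ to pass within sublinear distance of $x_0$ for almost every sample path.

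The main obstacle is producing this sublinear axis-tracking without the finite second moment hypothesis of Dahmani--Horbez, whose quantitative shadow estimates on $\T(S)$ rely directly on that stronger moment. My strategy is to run all the probabilistic pivoting machinery on $\C(S)$---where Theorem \ref{thm:A} applies without any moment condition---and then to convert only qualitative boundary convergence into sublinear axis distance in $\T(S)$ using Rafi's subsurface projection estimates and Masur's criterion for unique ergodicity. A secondary technicality is that pivotal times along the sample path are typically sparse, so one needs a Borel--Cantelli or telescoping argument interpolating between consecutive pivotal times $n_k \le n < n_{k+1}$, controlling $d_{\T}(x_0, A_{\w_n})$ at non-pivotal indices via the triangle inequality and the Kingman-controlled drift of the intermediate segment.
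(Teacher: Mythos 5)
Your reduction of the first, moment-free assertion to Theorem~\ref{thm:A} applied to the action on the curve complex is correct, and genuinely simpler than the paper's route. Since $\pi:\T(S)\to\C(S)$ is coarsely $\Mod(S)$-equivariant and coarsely Lipschitz, one has $\tau_{\C}(f)\le A\,\tau_{\T}(f)$ for every $f$, so the almost-sure bound $\tau_{\C}(\w_n)\ge \mathcal{L}' n$ from Theorem~\ref{thm:A} on $\C(S)$ immediately yields $\tau_{\T}(\w_n)\ge (\mathcal{L}'/A)n$; this also re-derives the eventual pseudo-Anosov corollary. The paper instead rebuilds the whole pivoting apparatus from scratch inside $\T(S)$, which is overkill for this first statement but is needed for the second.

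For the spectral theorem you have a genuine gap. Your plan correctly reduces, via the axis inequality and Kingman's theorem, to showing $d_{\T}(x_0,A_{\w_n})=o(n)$ almost surely. But you then assert that this sublinear axis tracking ``should'' follow from Kaimanovich--Masur boundary convergence in $\PMF$ together with a pivoting argument run on $\C(S)$ and ``lifted'' via Rafi, and this is exactly the step where the argument breaks. Curve-complex pivoting controls $d_{\C}$-quantities, not $d_{\T}$-quantities; the Masur--Minsky machinery translates one to the other only up to contributions from subsurface projections that are invisible in $\C(S)$, so a linear bound on $\C(S)$ does not bound $d_{\T}(x_0,A_{\w_n})$. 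Likewise, qualitative $\PMF$-convergence (even with Masur's unique ergodicity criterion) gives no rate and hence no $o(n)$ estimate; Dahmani--Horbez needed finite second moment precisely to make this quantitative. What the paper actually does is carry out the pivoting \emph{inside} $\T(S)$, replacing the $\delta$-thin-triangle inequality by Rafi's fellow-traveling results for thick Teichm\"uller geodesics (Theorems 7.1 and 8.1 of \cite{rafi2014hyperbolicity}): the notions of guided mapping class and $(L,D)$-witnessed geodesic, the thickness estimates for the pivotal segments, and Corollary~\ref{cor:trLengthDichotomy} are the ingredients that make the pivoting produce a direct Teichm\"uller-metric lower bound $\tau(\w_n)\ge d(x_0,\w_n x_0)-2d(x_0,w_\beta x_0)-R$, with the pivot $\beta$ chosen so that $d(x_0,w_\beta x_0)=o(n)$. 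Your sketch does not supply a substitute for this machinery, so the lower bound $\liminf\tau_{\T}(\w_n)/n\ge\lambda$ remains unproved.
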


We remark that Theorem \ref{thm:B} implies the spectral theorem for Teichm{\"u}ller spaces equipped with Thurston's asymmetric Lipschitz metric. This is due to the result of Choi and Rafi in \cite{choi2007comparison}: among the (marked) surfaces of the same injectivity radius, the distances with respect to the Teichm{\"u}ller metric and the Thurston metric differ by a uniformly bounded amount.

Our methods are influenced by \cite{maher2018random} where Maher and Tiozzo defined the notion of persistent joints that records the permanent depart of the random walk from the origin. We also make use of the boundary convergence of the random walk and the non-atomness of the limiting measure, which are established in \cite{maher2018random}. Another approach to study the deviation of random walks was suggested by \cite{mathieu2020deviation}. These traditional methods require finite second moment condition to deduce a deviation inequality that leads to the summable decay of shadows. A related result is the central limit theorem on hyperbolic groups established by Benoist and Quint in \cite{benoist2016central}.

In contrast, apart from the boundary convergence, pivoting and probability estimation appearing in our methods rely purely on the elementary properties of Gromov products and fellow-traveling geodesics. As long as pivots are present in the sample path, the pivoting method works regardless of deviation from the escape rate. (See \cite{gouezel2022exponential} for a similar idea.) In order to implement this idea on Teichm{\"u}ller spaces, we utilize the fellow-traveling phenomena of certain Teichm{\"u}ller geodesics due to Rafi \cite{rafi2014hyperbolicity}. We remark that Duchin also investigated `thin triangles' in Teichm{\"u}ller spaces in \cite{duchin2005thin} to study the dynamics of random walks on Teichm{\"u}ller spaces.

\subsection*{Acknowledgments}
We truly appreciate \c Ca\u gri Sert for fruitful conversations. We also thank the anonymous referee for valuable comments.

The first and second authors were partially supported by Samsung Science \& Technology Foundation grant No. SSTF-BA1702-01.

%
%

\section{Preliminaries}
\label{section:prelim}

\subsection{Geometry of a Gromov hyperbolic space}

Let $(X, d)$ be a metric space. Throughout, we fix a basepoint $x_{0}$ in $X$. The following notion is crucial to defining Gromov hyperbolicity. 
\begin{definition}[Gromov product]
	For $x, y, z \in X$, the \emph{Gromov product} $(x, y)_{z}$ is defined by $$(x, y)_{z} := {1 \over 2} \left[ d(x, z) + d(y, z) - d(x, y)\right].$$
\end{definition}

\begin{definition}[Gromov hyperbolic space]
A metric space $(X, d)$ is said to be \emph{Gromov hyperbolic} if it satisfies the following property for some $\delta > 0$:

\begin{property}\label{property:Gromov}
	 For any $x, y, z,w\in X$, we have $$(x, y)_{w} \ge \min \{(x, z)_{w}, (y, z)_{w}\} - \delta.$$ 
\end{property}
\end{definition}

For details on the properties of Gromov hyperbolic spaces, see \cite{gromov1987hyperbolic} and \cite{bridson1999metric}. In Sections \ref{section:prelim} and \ref{section:translation}, we assume that $(X, d)$ is a separable, geodesic, Gromov hyperbolic space. 

We now consider the \emph{Gromov boundary} $\partial X$ of $X$ in terms of the Gromov product. We regard that a sequence $(x_n)_{n \in \N}$ in $X$ is converging to a point at infinity if $(x_n, x_m)_{x_0} \to \infty$ as $\min \{m, n\} \to \infty$. Furthermore, two such sequences $(x_n), (y_n)$ will be considered as converging to the same point at infinity if $(x_n, y_n)_{x_0} \to \infty$ as $n \to \infty$. In this case, we regard $(x_n)$ and $(y_n)$ to be equivalent and denote $(x_n) \sim (y_n)$. In this point of view, the \emph{Gromov boundary} is defined as follows. $$\partial X := \left\lbrace (x_n) \in X^{\N} : \lim_{\min\{m, n\} \to \infty}(x_n, x_m)_{x_0} \to \infty \right\rbrace / \sim$$

The Gromov product defined above can be extended to the Gromov boundary by setting $$(x, y)_{x_0} := \sup_{(x_n) = x, (y_n) = y} \liminf_{m, n \to \infty} (x_m, y_n)_{x_0}.$$

One can interpret the Gromov product $(x, y)_{z}$ as a crude distance from $z$ to the geodesic connecting $x$ and $y$. Having this in mind, we have the following useful lemma.

\begin{lem}\label{lem:almostAdditive}
Let $n \ge 1$ and $x_{0}, \ldots, x_{n}$ be points in $X$. Suppose that \begin{equation}\label{eqn:almostAdditiveAssump}
(x_{i-1}, x_{i+1})_{x_{i}} +(x_{i}, x_{i+2})_{x_{i+1}} < d(x_{i}, x_{i+1}) - 3\delta
\end{equation}
for $i = 1, \ldots, n-2$. Then: \begin{enumerate}
\item $|(x_{i}, x_{k})_{x_{j}} - (x_{j-1}, x_{j+1})_{x_{j}}| \le 2\delta$ for $0 \le i < j < k \le n$, and 
\item \[
	\left|  \left(\sum_{i=0}^{n-1} d(x_{i}, x_{i+1}) - 2 \sum_{i=1}^{n-1} (x_{i-1}, x_{i+1})_{x_{i}} \right) - d(x_{0}, x_{n}) \right| \le 2(n-1) \delta.
\]
\end{enumerate}
\end{lem}

\begin{proof}
We prove (1) and (2) by induction on $n$. For $n = 1$, (1) is void and (2) holds automatically.

Let us now assume (1) and (2) for $n =m\ge 1$ and prove them for $n=m+1$. 
We claim that \begin{align}\label{eqn:almostAdditive1}
|(x_{i}, x_{k})_{x_{j}} - (x_{j-1}, x_{k})_{x_{j}}| \le \delta,\\ \label{eqn:almostAdditive2}
|(x_{i}, x_{k})_{x_{j}} - (x_{i}, x_{j+1})_{x_{j}}| \le \delta
\end{align}
for $0 \le i < j < k \le m+1$ concludes (1) for $n=m+1$. Indeed, given a triple $(i, j, k)$, Inequality \ref{eqn:almostAdditive1} and Inequality \ref{eqn:almostAdditive2} (with $i$ replaced with $j-1$) lead to the inequality in (1).

The nontrivial case for Inequality \ref{eqn:almostAdditive1} is when $i < j-1$. Since $0 \le i < j-1 <  j \le m$ in this case, Inequality \ref{eqn:almostAdditive1} for $n=m$ implies \begin{equation}\label{eqn:almostAdditive3}
|(x_{i}, x_{j})_{x_{j-1}} - (x_{j-2}, x_{j})_{x_{j-1}}| \le \delta.
\end{equation}
Moreover, since $1 \le j-1 < j < k \le m+1$, Inequality \ref{eqn:almostAdditive2} for $n=m$ implies \begin{equation}\label{eqn:almostAdditive3.5}
|(x_{j-1}, x_{k})_{x_{j}} - (x_{j-1}, x_{j+1})_{x_{j}}| \le \delta.
\end{equation}
Combining them, we have \begin{equation}\label{eqn:almostAdditive4}\begin{aligned}
(x_{i}, x_{j-1})_{x_{j}} &= d(x_{j-1}, x_{j}) - (x_{i}, x_{j})_{x_{j-1}} &\\
&\ge d(x_{j-1}, x_{j}) - (x_{j-2}, x_{j})_{x_{j-1}} - \delta &\textrm{($\because$ Inequality \ref{eqn:almostAdditive3})} \\
&>(x_{j-1}, x_{j+1})_{x_{j}} +2 \delta &\textrm{($\because$ Inequality \ref{eqn:almostAdditiveAssump})} \\
&\ge (x_{j-1}, x_{k})_{x_{j}} + \delta. & \textrm{($\because$ Inequality \ref{eqn:almostAdditive3.5})}
\end{aligned}\end{equation}
Now Property \ref{property:Gromov} reads \begin{equation}\label{eqn:almostAdditive5}
(x_{i}, x_{k})_{x_{j}} \ge \min \{(x_{i}, x_{j-1})_{x_{j}}, (x_{j-1}, x_{k})_{x_{j}}\} - \delta \ge (x_{j-1}, x_{k})_{x_{j}} - \delta
\end{equation}
and \begin{equation}\label{eqn:almostAdditive6}
(x_{j-1}, x_{k})_{x_{j}} \ge \min\{(x_{j-1}, x_{i})_{x_{j}}, (x_{i}, x_{k})_{x_{j}}\} - \delta.
\end{equation}
If $\min\{(x_{j-1}, x_{i})_{x_{j}}, (x_{i}, x_{k})_{x_{j}}\} = (x_{j-1}, x_{i})_{x_{j}}$ then Inequality \ref{eqn:almostAdditive4} and \ref{eqn:almostAdditive6} imply \[
(x_{i}, x_{j-1})_{x_{j}} > (x_{j-1}, x_{k})_{x_{j}} + \delta \ge (x_{j-1}, x_{i})_{x_{j}} + 1.5\delta - \delta,
\]
a contradiction. Hence, $\min\{(x_{j-1}, x_{i})_{x_{j}}, (x_{i}, x_{j})_{x_{j}}\} = (x_{i}, x_{k})_{x_{j}}$ and Inequality \ref{eqn:almostAdditive6} reads \begin{equation}\label{eqn:almostAdditive7}
(x_{j-1}, x_{k})_{x_{j}} \ge (x_{i}, x_{k})_{x_{j}} - \delta.
\end{equation}
Inequality \ref{eqn:almostAdditive5} and \ref{eqn:almostAdditive7} lead to Inequality \ref{eqn:almostAdditive1}. Inequality \ref{eqn:almostAdditive2} is deduced in a similar manner.

For (2), we have \[\begin{aligned}
	&\left|  \left(\sum_{i=0}^{m} d(x_{i}, x_{i+1}) - 2 \sum_{i=1}^{m} (x_{i-1}, x_{i+1})_{x_{i}} \right) - d(x_{0}, x_{m+1}) \right| \\
	&\le \left|  \left(\sum_{i=0}^{m-1} d(x_{i}, x_{i+1}) - 2 \sum_{i=1}^{m-1} (x_{i-1}, x_{i+1})_{x_{i}} \right) - d(x_{0}, x_{m}) \right|\\
	& \quad + \left| d(x_{0}, x_{m}) + d(x_{m}, x_{m+1}) - d(x_{0}, x_{m+1}) - 2(x_{m-1}, x_{m+1})_{x_{m}}\right| \\
	&\le 2(m-1) \delta + 2 |(x_{0} ,x_{m+1})_{x_{m}} - (x_{m-1}, x_{m+1})_{x_{m}}| \le 2m \delta. \qedhere
	\end{aligned}
\]
\end{proof}

We now define the shadows of a point in terms of the Gromov product.

\begin{definition}[Shadow]
	For $x_0, x \in X$ and $R > 0$, the \emph{shadow} $S_{x_0}(x, R)$ is defined by $$S_{x_0}(x, R) := \{y \in X : (x_0, y)_x \le R\}.$$
\end{definition}

Intuitively, the shadow is a set of points $y \in X$ that the geodesic segment connecting $x_0$ and $y$ is of distance at most $R$ from $x$ up to an additive constant. See Figure \ref{fig:shadow}.


\begin{figure}[h]
	\begin{tikzpicture}[scale=0.7, every node/.style={scale=1}]

	\draw[fill=gray!10] (2.65, 3) .. controls (0, 2) and (0, -2) .. (2.65, -3);
	\draw[draw=white, fill=gray!10, domain=311.455233559:408.544766441] plot ({4*cos(\x)}, {4*sin(\x)});
	
	\draw (0, 0) circle(4);
	
	\filldraw (-2, 0) circle(2pt);
	\draw (-2, 0) node[left] {$x_0$};
	
	\filldraw (2, 0) circle(2pt);
	\draw (2, 0) node[right] {$x$};
	
	\filldraw (2.8, 2) circle(2pt);
	\draw (2.8, 2) node[right] {$y$};

	\draw [domain=282:303] plot ({-4.99 + 14.2442*cos(\x)}, {13.93 + 14.2442*sin(\x)});
	
	\draw[dashed, <->] (1.45, 1.22) -- (1.95, 0.05);
	\draw (1.75, 0.7) node[right] {$\le R$};
	
	\draw (2.5, -1) node[below] {$S_{x_0}(x, R)$};

	\end{tikzpicture}
	\caption{Shadow of $x$ with respect to $x_{0}$.} \label{fig:shadow}
\end{figure}
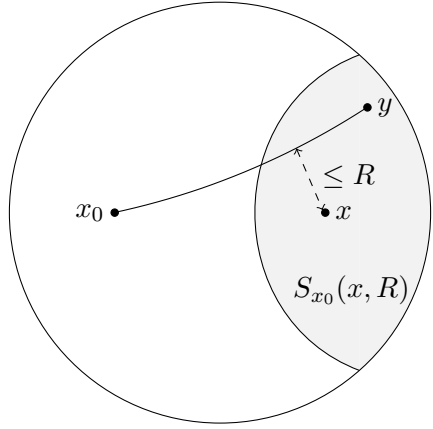

\subsection{Isometries of a Gromov hyperbolic space}

We mainly consider the group $\Isom(X)$ of isometries $X \to X$. Isometries of Gromov hyperbolic spaces are classified into the following categories.

\begin{prop}[Classification of isometries] For $g \in \Isom(X)$, one of the following holds. \begin{enumerate}
	\item $g$ is \emph{elliptic}, i.e., $\{g^nx : n \in \Z\}$ is bounded for any $x \in X$;
	\item $g$ is \emph{parabolic}, i.e., $g$ is not elliptic and has exactly one fixed point in $\partial X$; or
	\item $g$ is \emph{hyperbolic}, i.e., $g$ is not elliptic and has exactly two fixed points in $\partial X$.
\end{enumerate} In particular, when $g$ is hyperbolic, its action shows a source-sink dynamics; one fixed point in $\partial X$ is the attracting point while the other is the repelling point.
\end{prop}

One quantity representing the dynamics of an isometry is its \emph{translation length}.

\begin{definition}[Translation length]
	For $g \in \Isom(X)$, its \emph{translation length} $\tau(g)$ is $$\tau(g) := \liminf_{n \to \infty} {d(x, g^n x) \over n}$$ for any $x \in X$.
\end{definition}

Note that Lemma \ref{lem:almostAdditive} gives the following corollary. (cf. \cite[Proposition 5.8]{maher2018random})

\begin{cor} \label{cor:translationlength}
	For $g \in \Isom(X)$, if $g$ and $x_0 \in X$ satisfy $$d(x_0, gx_0)> 2(g x_0, g^{-1}x_0)_{x_0} + 3\delta$$ then we have $$\left| \tau(g) - \left( d(x_0, g x_0) - 2 (g^{-1}x_0, gx_0)_{x_0} \right) \right| \le 2\delta.$$
\end{cor}

%
%

\subsection{Random walks} \label{sec:RW}

We define a bi-infinite random walk on a group by adopting the convention in \cite{maher2018random}.

For a countable group $G$, let $\mu : G \to [0, 1]$ be a probability measure on $G$. Then the product space $(G^{\Z}, \mu^{\Z})$ forms the step space consisting of bi-infinite step paths. To obtain random walks, we consider the map $G^{\Z} \to G^{\Z}$, $(g_n) \mapsto (\w_n)$ so that $\w_0$ is the identity and $\w_{n-1}^{-1}\w_n = g_n$ for all $n$. In other words, we set $$\w_n = \begin{cases}
g_1 \cdots g_n &, n > 0\\
id &, n = 0\\
g_0^{-1}g_{-1}^{-1} \cdots g_{n+1}^{-1} &, n < 0.
\end{cases}$$ Via this map, $(G^{\Z}, \mu^{\Z})$ induces the probability space $(\Omega, \Prob)$, where $\Omega$ denotes the space of sample paths for random walks.

We often need to estimate the distance or the Gromov product among the translates of $x_{0}$ by isometries. In this situation, bringing a particular point to the basepoint can ease notation and computation. For example, consider isometries $w$, $g$, and $ h$. Then $(wg x_{0}, wh x_{0})_{wx_{0}}$ can also be computed by $(gx_{0}, hx_{0})_{x_{0}}$, which does not depend on $w$. In particular, when $w$ are words at step $n$ and $g$, $h$ are the next steps, then this reduction helps unify the cases.

In this philosophy, we keep the following notations throughout the paper: for a path $\vec{w} = (w_{1}, \ldots, w_{n})$, we write $$x_{n}:=w_{n} x_{0}, \quad x_{n \rightarrow m} := w_{n}^{-1} w_{m} x_{0}.$$
Morally, this amounts to shifting the basepoint to $x_{n}$ and observe the phenomena relative to step $n$. One can readily observe the following equalities:

\begin{enumerate}
\item $x_{0 \rightarrow n} = x_{n} = w_{n}x_{0}$, $x_{n \rightarrow 0} = w_{n}^{-1} x_{0}$, $x_{n \rightarrow n} = x_{0}$.
\item $d(x_{n}, x_{m}) = d(x_{k \rightarrow n}, x_{k \rightarrow m}) = d(x_{0}, x_{n \rightarrow m}) = d(x_{0}, x_{m \rightarrow n})$.
\item $(x_{n}, x_{m})_{x_{k}} = (x_{k \rightarrow n}, x_{k \rightarrow m})_{x_{0}}$.
\end{enumerate}

%
%

\section{Random walks on hyperbolic spaces}
\label{section:translation}

\subsection{Ingredients for persistent joints}\label{subsec:prelim}

In this section, $(X, d)$ is a separable geodesic Gromov hyperbolic space, $G \le \Isom(X)$ is a weakly hyperbolic group, and $\mu : G \to [0, 1]$ is a non-elementary probability measure. As in Section \ref{sec:RW}, $\mu$ induces the probability space $(\Omega, \Prob)$ for random walks.

We recall some facts proved in \cite{maher2018random}. First of all, Maher and Tiozzo proved in \cite[Theorem 1.1]{maher2018random} that almost every sample path $\w_n x_0$ converges to a point $\w_+$ in $\partial X$. This induces the hitting measure $\nu$ on $\partial X$ by \[
\nu(S) := \Prob(\w : \w_{n} o \rightarrow \xi \in S\,\, \textrm{as}\,\, n \rightarrow \infty )
\]
for Borel sets $S \subseteq \partial X$. Maher and Tiozzo also showed that $\nu$ is the unique $\mu$-stationary probability measure on $\partial X$ and is non-atomic. We also borrow the notation \[
Sh(x_{0}, r) := \{S_{x_{0}}(gx_{0}, R) : g \in G, \,d_{X}(x_{0}, gx_{0}) - R \ge r\}.
\]

\begin{prop}[{\cite[Proposition 5.1]{maher2018random}}]\label{prop:MT5.1}
\[
\lim_{r \rightarrow \infty} \sup_{S \in Sh(x_{0}, r)} \nu(\bar{S}) = 0
\]
\end{prop}

For each subset $U$ of $X$, we also define \[
H_{x}^{+}(U) := \Prob(\w_{n}x \in U \mbox{ for some } n \ge 0).
\] Similarly we define $H_x^{-}(U)  := \Prob(\w_{n}x \in U \mbox{ for some } n  \le 0)$. Then the above proposition holds analogously:

\begin{prop}[{\cite[Proposition 5.2]{maher2018random}}]
We have \[
\lim_{r \to \infty} \sup_{S \in Sh(x_{0}, r)} H_{x_{0}}^{\pm}(S) = 0.
\]
\end{prop}
Let $R_{1}>0$ be such that $ \sup_{S \in Sh(x_{0}, R_{1})} H_{x_{0}}^{\pm}(S) < 0.01$.

Meanwhile, since $\mu$ is assumed to be non-elementary, there exist two independent hyperbolic isometries $w_{+}, w_{-}$ in $\llangle \supp \mu \rrangle$, the subsemigroup generated by the support of $\mu$. Their independence implies the following: $\{(w_{1}^{n} x_{0}, w_{2}^{m} x_{0})_{x_{0}}\}_{m, n > 0}$ is bounded, say by $R_{2}>0$, for \[
(w_{1}, w_{2}) \in \{ (w_{+}, w_{+}^{-1}), (w_{-}, w_{-}^{-1}), (w_{+}, w_{-}), (w_{+}, w_{-}^{-1}), (w_{+}^{-1}, w_{-}), (w_{+}^{-1}, w_{-}^{-1})\}.
\]
See Figure \ref{fig:indepiso}. We now fix $R = 1000(R_{1} + R_{2} + \delta+1)$. Note that the bound $R_{2}$ for $\{(w_{1}^{n} x_{0}, w_{2}^{m} x_{0})_{x_{0}}\}_{m, n > 0}$ still works if we replace $w_{+}$ and $w_{-}$ with their positive powers. By taking suitable powers of $w_{\pm}$ if necessary, we can assume that: \begin{enumerate}
\item $w_{+}, w_{-} \in \supp \mu^{L}$ for the same power $L$, i.e., $w_{+} = a_{1} \cdots a_{L}$ and $w_{-} = b_{1} \cdots b_{L}$ for some isometries $a_{i}, b_{i} \in \supp \mu$, and
\item $d(x_{0}, w_{+} x_{0}), d(x_{0}, w_{-}x_{0}) > 100R$.
\end{enumerate}

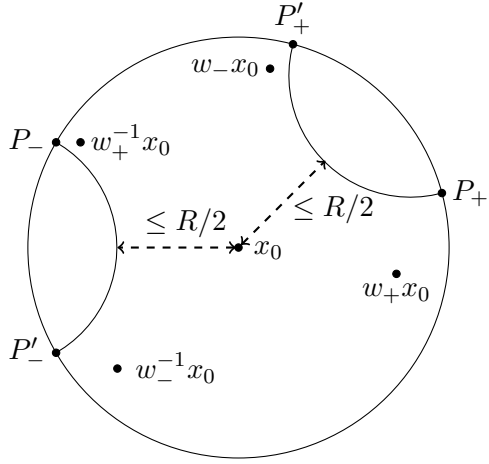
\begin{figure}[h]
	\begin{tikzpicture}[scale=0.7, every node/.style={scale=1}]
	
	\filldraw (0, 0) circle(2pt);
	\draw (0.1, -0.05) node[right] {$x_0$};
	
	\draw (0, 0) circle(4);

	\draw [domain=165:285] plot ({3.265986 + 2.309401*cos(\x)}, {3.265986 + 2.309401*sin(\x)});
	
	\filldraw (1.035276, 3.863703) circle(2pt);
	\draw (1.035276, 3.863703) node[above] {$P_{+}'$};
	
	\filldraw (0.6, 3.4) circle(2pt);
	\draw (0.6, 3.4) node[left] {$w_- x_0$};
	
	\filldraw (3.863703, 1.035276) circle(2pt);
	\draw (3.863703, 1.035276) node[right] {$P_{+}$};
	
	\filldraw (3, -0.5) circle(2pt);
	\draw (3, -0.5) node[below] {$w_+ x_0$};

	\draw [domain=300:420] plot ({-4.618802 + 2.309401*cos(\x)}, {2.309401*sin(\x)});
	
	\filldraw (-3.464101, 2) circle(2pt);
	\draw (-3.464101, 2) node[left] {$P_{-}$};
	
	\filldraw (-3, 2) circle(2pt);
	\draw (-3, 2) node[right] {$w_+^{-1} x_0$};
	
	\filldraw (-3.464101, -2) circle(2pt);
	\draw (-3.464101, -2) node[left] {$P_{-}'$};
	
	\filldraw (-2.3, -2.3) circle(2pt);
	\draw (-2.15, -2.33) node[right] {$w_-^{-1} x_0$};
	
	\draw[dashed, thick, <->] (0.05, 0.05) -- (4 - 2.35, 4 - 2.35);
	\draw (1.8, 1.2) node[below] {$\le R_{2}$};
	
	\draw[dashed, thick, <->] (-0.05, 0) -- (-2.3, 0);
	\draw (-1, 0) node[above] {$\le R_{2}$};
	
	\end{tikzpicture}
	\caption{Choice of $R$ and $w_{\pm}$. Here, $P_+$ and $P_+'$ are attracting points and $P_-$ and $P_-'$ are repelling points of $w$ and $w'$, respectively.} \label{fig:indepiso}
\end{figure}

For convenience, we fix the following notations: \[\begin{aligned}
p_{+} &=  \mu(a_{1}) \cdots \mu(a_{L}) , \\ p_{-} &= \mu(b_{1}) \cdots \mu(b_{L}), \\ 
P &= \max\left(\frac{p_{+}}{p_{+}+ p_{-}}, \frac{p_{-}}{p_{+} + p_{-}}\right). \end{aligned}
\]

\subsection{Persistent joints}
We define a random variable $\chi_{k}(\w)$ that witnesses \emph{persistent joints} at position $3kL$, which is a slight variation of the one defined by Maher and Tiozzo in \cite[Section 5.2]{maher2018random}. See also Figure \ref{fig:joint}.
\begin{definition}[Persistent joint]
	
	For a sample path $\w = (\w_n)$ with the step sequence $(g_n) = (\w_{n-1}^{-1}\w_n)$, we define a random variable $\chi_k(\w)$ as follows.
	
	$\chi_{k}(\w) = 1$ if:
	\begin{enumerate}
		\item \[
		(g_{3(k-1)L + 1}, \ldots, g_{3kL})= \left\{ \begin{array}{c} (b_{1}, \ldots, b_{L}, a_{1}, \ldots, a_{L}, b_{1}, \ldots, b_{L}) \\ \textrm{or} \\  (b_{1}, \ldots, b_{L}, b_{1}, \ldots, b_{L}, b_{1}, \ldots, b_{L})\end{array}\right.,
		\]
		\item $x_{n} \in S_{x_{(3k - 2)L}} (x_{3(k-1)L}, 0.9R)$ for all integers $n \le 3(k-1)L$, and 
		\item $x_{n} \in S_{x_{(3k-1)L} }(x_{3kL}, 0.9R)$ for all integers $n \ge 3kL$
	\end{enumerate}
	where $x_n = \w_n x_0$. Otherwise, $\chi_{k}(\w)=0$.
	
\end{definition}

\begin{figure}[h]
	\begin{tikzpicture}[scale=0.85, every node/.style={scale=1}]

	\draw[fill=gray!10] (2.65, 3) .. controls (1, 2) and (1, -2) .. (2.65, -3);
	\draw[draw=white, fill=gray!10, domain=311.455233559:408.544766441] plot ({4*cos(\x)}, {4*sin(\x)});
	
	\draw[fill=gray!10] (-2.65, 3) .. controls (-1, 2) and (-1, -2) .. (-2.65, -3);
	\draw[draw=white, fill=gray!10, domain=311.455233559:408.544766441] plot ({-4*cos(\x)}, {4*sin(\x)});
	
	\draw (0, 0) circle(4);
	
	\filldraw (-2, 0) circle(2pt);
	\draw (-1.7, 0.3) node[left] {$x_{3(k-1)L}$};
	
	\filldraw (2, 0) circle(2pt);
	\draw (2, -0.3) node[right] {$x_{3kL}$};
	
	\filldraw (-0.5, 0.5) circle(2pt);
	\draw (-0.5, 0.5) node[above] {$x_{(3k-2)L}$};
	
	\filldraw (0.5, -0.5) circle(2pt);
	\draw (0.5, -0.5) node[below] {$x_{(3k-1)L}$};
	
	\draw (-2.5, -1.5) -- (-2.3, -1.3) -- (-2.5, -1) -- (-2.3, -0.7) -- (-2.5, -0.5) -- (-2, -0.5) -- (-2, 0);
	\draw[thick] (-2, 0) -- (-0.5, 0.5) -- (0.5, -0.5) -- (2, 0);
	\draw (2, 0) -- (2.5, 0.5) -- (2.5, 1) -- (3, 0.5) -- (3.5, 1) -- (3, 1.2) -- (3, 1.5);
	\draw (3.2, 1.3) node[above] {$\scalebox{-1}[1]{$\ddots$}$};
	\draw (-2.3, -1.6) node[left] {$\scalebox{-1}[1]{$\ddots$}$};
	
	\draw (-0.2, 0.1) node[right] {$w_{\pm}$};
	\draw (-1, -0.3) node[above] {$w_-$};
	\draw (1.05, -0.4) node[above] {$w_-$};
	
	\draw (2, -4) node[right] {$S_{x_{(3k-1)L}}(x_{3kL}, 0.9R)$};
	\draw (-2, 4) node[left] {$S_{x_{(3k-2)L}}(x_{3(k-1)L}, 0.9R)$};
	
	\draw[->] (4, -3.5) .. controls (4, -3) and (3, -3) .. (3, -2);
	\draw[->] (-4, 3.5) .. controls (-4, 3) and (-3, 3) .. (-3, 2);

	\end{tikzpicture}
	\caption{Description of a persistent joint} \label{fig:joint}
\end{figure}
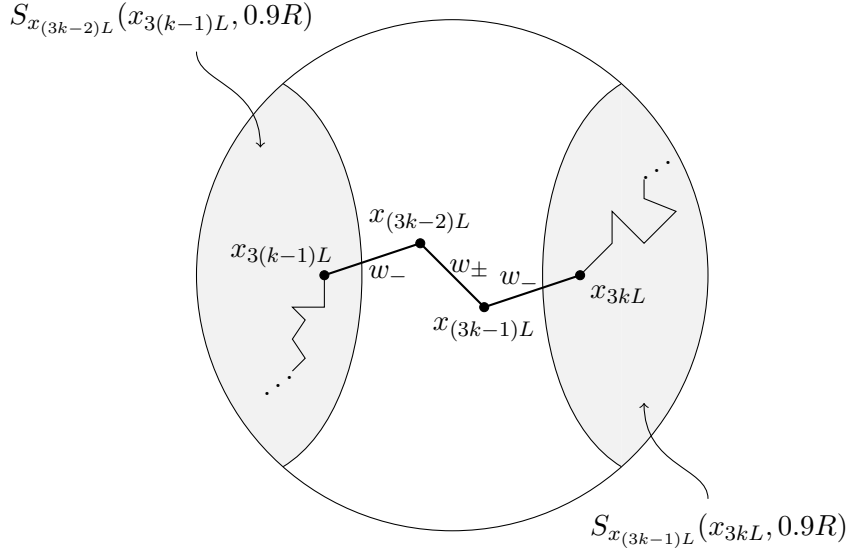

We first observe that $\E(\chi_{1}(\w)) > 0$. The probability for condition (1) is $(p_{+} + p_{-}) p_{-}^{2} \neq 0$. Given (1) as the prior condition, (2) and (3) become independent events. (2) holds if the shifted random walk $T^{3(k-1)L}\w$ does not hit $S_{x_0}(w_-x_0, d(x_{0}, w_{-}x_{0}) - 0.9R)$ in negative time. Indeed, we have 
\[\begin{aligned}
&g_{3(k-1) L}^{-1} \cdots g_{n+1}^{-1} x_{0} \notin S_{x_{0}} (w_{-}x_{0}, d(x_{0}, w_{-} x_{0}) - 0.9R) \\
\Leftrightarrow &\w_{n} x_{0} \notin S_{\w_{3(k-1) L} x_{0}} (\w_{3(k-1)L} w_{-} x_{0}, d(x_{0}, w_{-} x_{0}) -0.9R) \\
\Leftrightarrow & \w_{n} x_{0} \in S_{\w_{(3k-2) L} x_{0}} (\w_{3(k-1)L} x_{0}, 0.9R)
\end{aligned}
\]
for $n < 3(k-1) L$. Here, the last inequality follows from the equality \[
(p, y)_{z} + (p, z)_{y} = d(y, z)
\] for $p \in X$ and $y = \w_{3(k-1)L} x_{0}$, $z = \w_{3(k-1)L} w_{-} x_{0} = \w_{(3k-2)L} x_{0}$. Thus, the probability for condition (2) is at least \[
1-H^{-}_{x_{0}}(S_{x_{0}}(w_{-}x_{0}, d(x_{0}, w_{-}x_{0}) -R)) \ge 1-H^{-}_{x_{0}}(S_{x_{0}}(w_{-}x_{0}, 99R))  \ge 0.99.
\] Similarly, the probability for condition (3) is at least \[
1-H^{+}_{x_{0}} (S_{x_{0}}(w_{-}^{-1} x_{0}, d(x_{0}, w_{-} x_{0}) - R)) \ge 0.99.
\] Overall, we have \[
\eta := \Prob(\chi_{1}(\w) =1) \ge (p_{+} + p_{-}) p_{-}^{2} \cdot (0.99)^{2} > 0.
\]
Note also that $\chi_{k}(\w) = \chi_{1}(T^{3(k-1)L} \w)$. We invoke a variant of Kingman's subadditive ergodic theorem.

\begin{thm}[{\cite[(8.10) Theorem]{woess2000random}}]\label{thm:Woess}
Let $(\Omega, \Prob)$ be a probability space and $U : \Omega \rightarrow \Omega$ be a measure-preserving transformation. If $W_{n}$ is a non-negative real-valued random variable on $\Omega$ satisfying the subadditivity $W_{n+m} \le W_{n} + W_{m} \circ U^{n}$ for all $m, n \in \N$, and $W_{1}$ has finite first moment, then there is a $U$-invariant random variable $W_{\infty}$ such that \[
\lim_{n \rightarrow \infty} \frac{1}{n} W_{n} = W_{\infty}
\]
almost surely and in $L^{1}(\Omega, \Prob)$. If $U$ is ergodic in addition, then $W_{\infty}$ is constant a.e.
\end{thm}

We define $W_{n} = \sum_{k=1}^{n} \chi_{k}(\w)$. Then $W_{n+m} = W_{n} + W_{m} \circ T^{3Ln}$ holds. Since $W_{1}$ is bounded, it has finite first moment. Applying Theorem \ref{thm:Woess}, we get almost everywhere convergence of $\frac{1}{n} W_{n}$ to an a.e. constant $W_{\infty}$. Since $\mathbb{E}(W_{1}) = \eta > 0$, we have $W_{\infty} = \eta$ a.e.

We now consider a modified version of $W_{n}$. Given positive integers $m \le n$, we say that $\mathcal{N} = \{n_{1}< \cdots< n_{k}\}\subseteq 3L\Z$ is an \emph{$(m, n)$-set of pivots} for a finite path $\vec{w} = (w_{1}, \ldots, w_{n})$ if the following hold: \begin{enumerate}
\item $\mathcal{N} \subseteq \{1, \ldots, m\}$;
\item for each $i=1, \ldots, k$, \[
	(g_{n_{i} - 3L + 1}, \ldots, g_{n_{i}})= \left\{ \begin{array}{c} (b_{1}, \ldots, b_{L}, a_{1}, \ldots, a_{L}, b_{1}, \ldots, b_{L}) \\ \textrm{or} \\  (b_{1}, \ldots, b_{L}, b_{1}, \ldots, b_{L}, b_{1}, \ldots, b_{L})\end{array}\right.,
	\]
	\item for each $i=1, \ldots, k$, $x_{j} \in S_{x_{n_{i} - 2L}} (x_{n_{i} - 3L}, 0.9R)$ for  $n_{i-1} - L \le j \le n_{i} - 3L$, and 
	\item for each $i=1, \ldots, k$, $x_{j} \in S_{x_{n_{i} - L}}(x_{n_{i}}, 0.9R)$ for $n_{i} \le j \le n_{i+1}-2L$.
\end{enumerate}
(For convenience, we set $n_{0} = L$ and $n_{k+1} = n+2L$.) Note that if $\mathcal{N}$ and $\mathcal{N}'$ are $(m, n)$-sets of pivots for $\vec{w} = (w_{1}, \ldots, w_{n})$, then so is their union. Thus, we can associate each finite path $\vec{w} = (w_{1}, \ldots, w_{n})$ with its maximal $(m, n)$-set of pivots $\mathcal{N}(\vec{w}) = \mathcal{N}_{m, n}(\vec{w})$. We also define \[
F_{n} :=\left\{\vec{w} = (w_{1}, \ldots, w_{n}) : \# \mathcal{N}_{n, n}(\vec{w}) \ge \frac{\eta n}{6L}+1\right\}.
\]
Note that $\#\mathcal{N}_{n, n}(\w_1, \ldots, \w_n) \ge W_{\lfloor n/3L\rfloor}(\w_1, \ldots, \w_n)$ for each $n$ and $W_{\lfloor n/3L \rfloor} (\w)\ge \eta \lfloor n/3L\rfloor \ge \frac{\eta n}{ 6L}+1$ eventually holds for a.e. sample path $\w$. Consequently, $(\w_{1}, \ldots, \w_{n}) \in F_{n}$ eventually holds for a.e. $\w$.

Let us now fix a finite path $\vec{w} = (w_{1}, \ldots, w_{n})$ with $\mathcal{N}_{n, n}(\vec{w}) = \{n_{1} < \ldots < n_{k}\}$. For convenience, we define the following for $i=1, \ldots, k$: \begin{equation}\label{eqn:Aialphai}\begin{aligned}
A_{i}'(\vec{w})&:= n_i(\vec{w}) - 3L, \\
\alpha'_{i}(\vec{w}) &:= n_i(\vec{w}) - 2L,\\
\beta'_{i}(\vec{w}) &:= n_i(\vec{w}) - L,\\
B'_{i}(\vec{w}) &:= n_{i}(\vec{w}).
\end{aligned}
\end{equation}
We also let $\beta'_{0}(\vec{w}) = B'_{0}(\vec{w}) := 0$ and $\alpha'_{k+1}(\vec{w}) = A'_{k+1}(\vec{w}) := n$. The following lemma allows us to calculate the distances among $x_{j}$.

\begin{lem}\label{lem:wellSevered}
	We have the following:
\begin{enumerate}
\item $d(x_{\beta'_{i-1}}, x_{\alpha'_{i}}) > 99R$ for each $i=1, \ldots, k+1$;
\item $(x_{\alpha'_{i} \rightarrow \beta'_{i-1}}, w_{\pm}x_0)_{x_{0}} < 0.6R$ for each $i=1, \ldots, k$, and
\item $(x_{\beta'_{i} \rightarrow \alpha'_{i+1}}, w_{\pm}^{-1}x_0)_{x_{0}} < 0.6R$ for each $i=1, \ldots, k$.
\end{enumerate}

\end{lem}

\begin{proof}

Let us discuss (1). For $2 \le i \le k+1$, we have\[
(x_{\alpha'_{i}}, x_{B'_{i-1}})_{x_{\beta'_{i-1}}} =  d(x_{\beta'_{i-1}}, x_{B'_{i-1}}) - (x_{\alpha'_{i}}, x_{\beta'_{i-1}})_{x_{B'_{i-1}}} \ge 100R - 0.9R > 99R. 
\]
Since $(x_{\alpha'_{i}}, x_{B'_{i-1}})_{x_{\beta'_{i-1}}} \le d(x_{\alpha'_{i}}, x_{\beta'_{i-1}})$, we deduce the desired conclusion. Similar discussion on $(x_{\beta'_{i-1}}, x_{A'_{i}})_{x_{\alpha'_{i}}}$ for $1 \le i \le k$ handles the remaining case: \begin{equation}\label{eqn:betaAAlpha}
(x_{\beta'_{i-1}}, x_{A'_{i}})_{x_{\alpha'_{i}}} = d(x_{\alpha'_{i}}, x_{A'_{i}}) - (x_{\beta'_{i-1}}, x_{\alpha'_{i}})_{x_{A'_{i}}} \ge 100R - 0.9R > 99R.
\end{equation}

For (2), we invoke Property \ref{property:Gromov}: \[
\min \left\{(w_{-}^{-1} x_{0}, x_{\alpha'_{i} \rightarrow \beta'_{i-1}})_{x_{0}},(x_{\alpha'_{i} \rightarrow \beta'_{i-1}}, w_{\pm} x_{0})_{x_{0}}\right\} - \delta \le (w_{-}^{-1} x_{0}, w_{\pm} x_{0})_{x_{0}} < 0.5R.
\]
Here $(w_{-}^{-1} x_{0}, x_{\alpha'_{i} \rightarrow \beta'_{i-1}})_{x_{0}} = (x_{\alpha'_{i} \rightarrow A'_{i}}, x_{\alpha'_{i} \rightarrow \beta'_{i-1}})_{x_{0}} > 99 R$ was proven in Inequality \ref{eqn:betaAAlpha}. Hence, we deduce $(x_{\alpha'_{i} \rightarrow \beta'_{i-1}}, w_{\pm} x_{0})_{x_{0}} < 0.5R+\delta< 0.6R$. (3) is argued similarly.
\end{proof}

\begin{cor}\label{cor:progress}
Let \[
(y_{2i-1}, y_{2i}) := (x_{\alpha_{i}'}, x_{\beta_{i}'})
\]
for $i = 1, \ldots, k$ and $y_{0} = x_{0}$, $y_{2k+1} = x_{n}$. Then we have \[
(y_{i}, y_{l})_{y_{j}} \le 0.8R,\quad d(y_{i}, y_{j}) \le d(y_{i}, y_{l}) - 95(l-j)R
\] for all $0 \le i \le j \le l\le 2k+1$.
\end{cor}

\begin{proof}
The first item of Lemma \ref{lem:wellSevered} tells us that $d(y_{2i}, y_{2i+1}) > 99R$ for $i = 0, \ldots, k$. Moreover, $d(y_{2i-1}, y_{2i}) = d(x_{0}, w_{\pm} x_{0}) \ge 100R$ for $i = 1, \ldots, k$. Finally, (2) and (3) of Lemma \ref{lem:wellSevered} read that $(y_{i-1}, y_{i+1})_{y_{i}} < 0.6R$ for $i = 1, \ldots, 2k$ and \[
(y_{i-1}, y_{i+1})_{y_{i}} + (y_{i}, y_{i+2})_{y_{i+1}} < 1.2R < 99R - 3 \delta \le d(y_{i}, y_{i+1}) - 3\delta
\]
for $i=1, \ldots, 2k-1$. We can then apply Lemma \ref{lem:almostAdditive} and conclude \[
(y_{i}, y_{l})_{y_{j}} \le (y_{j-1}, y_{j+1})_{y_{j}} + 2\delta \le 0.8R
\]
for $0 \le i < j < l \le 2k+1$. Moreover, this implies \[
d(y_{i}, y_{j}) \le d(y_{i}, y_{j+1}) - d(y_{j}, y_{j+1}) + 2 (y_{i}, y_{j+1})_{y_{j}} \le d(y_{i}, y_{j+1}) - 99R + 1.6 R
\]
for $0 \le i \le j < 2k+1$, which leads to the second conclusion.
\end{proof}

We now define constants $D, M$ and set $G_{n}$ such that\[\begin{aligned}
D &:= R\eta / L, \\
M &> 1+10R + 2d(x_{0}, w_{+} x_{0}) + 2d(x_{0}, w_{-}x_{0})+4L,\\
G_{n} &:= \left\{ \vec{w} \in F_n : \tau(w_{n}) \le \left(2D-\frac{2\eta}{M}\right)n\right\}.
\end{aligned}
\]

Furthermore, for $\vec{w} \in F_{n}$ and $Q > 0$ we define \[
\begin{aligned} \mathcal{N}_{f}(\vec{w}; Q) &= \left\{ n_{i} \in \mathcal{N}(\vec{w}) : d(x_{0}, x_{\beta_{i}'}) \le \frac{1}{2} d(x_{0}, x_{n}) -(D+Q\eta/M)n \right\},\\ 
\mathcal{N}_{b}(\vec{w};Q) &= \left\{ n_{i} \in \mathcal{N}(\vec{w}): d(x_{n}, x_{\alpha_{i}'}) \le \frac{1}{2} d(x_{0}, x_{n}) - (D+Q\eta/M)n\right\},\\
\mathcal{N}_{0}(\vec{w};Q) &= \mathcal{N}(\vec{w})\setminus \left[\mathcal{N}_{f}(\vec{w};Q) \cup \mathcal{N}_{b}(\vec{w};Q)\right].
\end{aligned}
\]
We observe the following estimation of $\mathcal{N}_{f}$ and $\mathcal{N}_{b}$.

\begin{lem} \label{lem:setsize}
For $n>10L/\eta$, if $\vec{w} \in F_{n}$ and $Q \le 1$ then
	$$|\mathcal{N}_{f}(\vec{w}; Q)| \ge \frac{\eta n}{20LM^{2}} \quad \mbox{or} \quad |\mathcal{N}_{b}(\vec{w}; Q)| \ge \frac{\eta n}{20LM^{2}}$$
	holds.
\end{lem}

\begin{proof}
	Suppose not. Then $\mathcal{N}_{0}(\vec{w};Q)$ contains at least $\frac{\eta n}{15L}+1$ indices. By setting $i = 1$ and increasing $j$, Corollary \ref{cor:progress} implies that $d(x_{0}, x_{\beta_{j}'})$ increases as $j$ increases. Similarly, by setting $j = N+1$ and decreasing $i$, we realize that $d(x_{\alpha_{i}}, x_{n})$ increases as $i$ decreases. Consequently, we may take $t$, $t'$ such that \[
	\mathcal{N}_{0}(\vec{w};Q) = \{n_{t}, n_{t+1}, \ldots,  n_{t'}\}.
	\]
	Again, using Corollary \ref{cor:progress}, we deduce that \[
	d(x_{\beta_{t}'}, x_{\alpha_{t'}'}) \ge 90R \cdot \frac{\eta n}{15L} \ge 6 R\eta n/L.
	\]
	Recall that $R >1000>Q$, $M>L$, $D ={R \eta \over L}$ and $\frac{2\eta n}{L}\ge20$. Then Corollary \ref{cor:progress} implies that \[\begin{aligned}
	d(x_{0}, x_{n}) & \ge d(x_{0}, x_{\beta_{t}'}) + d(x_{\beta_{t}'}, x_{\alpha_{t'}'}) + d(x_{\alpha_{t'}'}, x_{n}) - 4.4R\\
	& \ge d(x_{0}, x_{n}) - 2Dn - \frac{2Q\eta n}{M} + \frac{6 R \eta n}{L}  - 4.4R\\
	& \ge d(x_{0}, x_{n}) - \frac{2R \eta n}{L} -\frac{2R \eta n}{L} + \frac{6R \eta n}{L}- 4.4R > d(x_{0}, x_{n}),
	\end{aligned}
	\]wich is a contradiction.
\end{proof}

From this lemma, it follows that \[\begin{aligned}
F_{n, f}(Q) & := \left\{ \vec{w} \in F_{n} : |\mathcal{N}_{f}(\vec{w};Q)| \ge \frac{\eta n}{20 LM^{2}} \right\},\\
F_{n, b}(Q) & := \left\{ \vec{w} \in F_n  : |\mathcal{N}_{b}(\vec{w};Q)| \ge \frac{\eta n}{20LM^{2}} \right\}
\end{aligned}
\]
cover entire $F_{n}$ for $Q\le 1$ and large enough $n$. We also define $$G_{n, f}(Q) := F_{n, f}(Q) \cap G_{n} \quad \mbox{and} \quad G_{n, b}(Q) := F_{n, b}(Q) \cap G_{n}.$$

For each $\vec{w} \in F_{n, f}(Q)$ ($\vec{w} \in F_{n, b}(Q)$, resp.), we fix an integer $N=N(\vec{w})$ between $\frac{\eta n}{50 M^{2}L}$ and $\frac{\eta n}{20 M^{2} L}$. Then we pick \emph{pivot indices} $p_{1}(\vec{w})<\ldots< p_{N}(\vec{w})$ from $\mathcal{N}_{f}$ ($\mathcal{N}_{b}$, resp.). We introduce a notation \[
\begin{aligned}
A_{i}(\vec{w})&:= p_i(\vec{w}) - 3L, \\
\alpha_{i}(\vec{w}) &:= p_i(\vec{w}) - 2L,\\
\beta_{i}(\vec{w}) &:= p_i(\vec{w}) - L,\\
B_{i}(\vec{w}) &:= p_{i}(\vec{w}).
\end{aligned}
\]
For convenience, we also let $\beta_{0}(\vec{w}) = B_{0}(\vec{w}) := 0$ and $\alpha_{N+1}(\vec{w}) = A_{N+1}(\vec{w}) := n$. Recalling Figure \ref{fig:joint}, $A_i$, $\alpha_i$, $\beta_i$, $B_i$ are described as in Figure \ref{fig:AB}

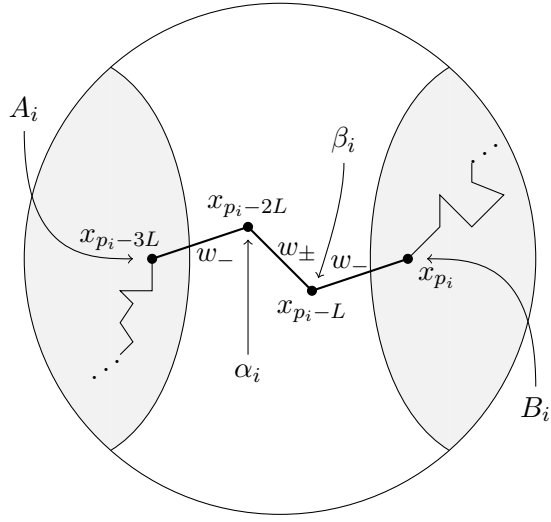
\begin{figure}[h]
	\begin{tikzpicture}[scale=0.85, every node/.style={scale=1}]

	\draw[fill=gray!10] (2.65, 3) .. controls (1, 2) and (1, -2) .. (2.65, -3);
	\draw[draw=white, fill=gray!10, domain=311.455233559:408.544766441] plot ({4*cos(\x)}, {4*sin(\x)});
	
	\draw[fill=gray!10] (-2.65, 3) .. controls (-1, 2) and (-1, -2) .. (-2.65, -3);
	\draw[draw=white, fill=gray!10, domain=311.455233559:408.544766441] plot ({-4*cos(\x)}, {4*sin(\x)});
	
	\draw (0, 0) circle(4);
	
	\filldraw (-2, 0) circle(2pt);
	\draw (-1.7, 0.3) node[left] {$x_{p_i - 3L}$};
	
	\filldraw (2, 0) circle(2pt);
	\draw (2, -0.3) node[right] {$x_{p_i}$};
	
	\filldraw (-0.5, 0.5) circle(2pt);
	\draw (-0.5, 0.5) node[above] {$x_{p_i - 2L}$};
	
	\filldraw (0.5, -0.5) circle(2pt);
	\draw (0.5, -0.5) node[below] {$x_{p_i - L}$};
	
	\draw (-2.5, -1.5) -- (-2.3, -1.3) -- (-2.5, -1) -- (-2.3, -0.7) -- (-2.5, -0.5) -- (-2, -0.5) -- (-2, 0);
	\draw[thick] (-2, 0) -- (-0.5, 0.5) -- (0.5, -0.5) -- (2, 0);
	\draw (2, 0) -- (2.5, 0.5) -- (2.5, 1) -- (3, 0.5) -- (3.5, 1) -- (3, 1.2) -- (3, 1.5);
	\draw (3.2, 1.3) node[above] {$\scalebox{-1}[1]{$\ddots$}$};
	\draw (-2.3, -1.6) node[left] {$\scalebox{-1}[1]{$\ddots$}$};
	
	\draw (-0.2, 0.1) node[right] {$w_{\pm}$};
	\draw (-1, -0.3) node[above] {$w_-$};
	\draw (1.1, -0.4) node[above] {$w_-$};
	
	\draw (-4, 2) node[above] {$A_i$};
	\draw[->] (-4, 2) .. controls (-4, 0) and (-3, 0) .. (-2.3, 0);
	
	\draw (-0.5, -1.5) node[below] {$\alpha_i$};
	\draw[->] (-0.5, -1.5) -- (-0.5, 0.3);
	
	\draw (1, 1.5) node[above] {$\beta_i$};
	\draw[->] (1, 1.5) .. controls (1, 1) and (0.8, 0.3).. (0.6, -0.3);
	
	\draw (4, -2) node[below] {$B_i$};
	\draw[->] (4, -2) .. controls (4, 0) and (3, 0) .. (2.3, 0);

	\end{tikzpicture}
	\caption{Choice of $A_i$, $\alpha_i$, $\beta_i$, $B_i$} \label{fig:AB}
\end{figure}

For each choice of $\sigma \in \{0, 1\}^{N}$, we now define the pivoted word $\vec{w}^{\sigma}$ by declaring the steps $\{g_{i}^{\sigma}\}$ as follows. In plain words, we modify the type of joints that are marked by $\sigma$ only. For $\sigma(i) = 1$ we set \[
(g^{\sigma}_{\alpha_i + 1}, \ldots, g^{\sigma}_{\beta_i}) := \begin{cases}

(a_1, \ldots, a_L) &\mbox{if } (g_{\alpha_i + 1}, \ldots, g_{\beta_i}) = (b_1, \ldots, b_L)\\
(b_1, \ldots, b_L) &\mbox{if } (g_{\alpha_i + 1}, \ldots, g_{\beta_i}) = (a_1, \ldots, a_L)

\end{cases}
\]
Other steps remain unchanged. For $\vec{w}^{\sigma} = (w_i^{\sigma})$, we similarly denote $w_{n}^{\sigma} x_{0}$ by $x_{n}^{\sigma}$ and $(w_{n}^{\sigma})^{-1} w_{m}^{\sigma} x_{0}$ by $x_{n \rightarrow m}^{\sigma}$. Then we observe \[
x_{\alpha_{i} \rightarrow \beta_{i-1}} = x_{\alpha_{i} \rightarrow \beta_{i-1}}^{\sigma}, \quad x_{\beta_{i-1} \rightarrow \alpha_{i}} = x_{\beta_{i-1} \rightarrow \alpha_{i}}^{\sigma}.
\]
By definition, we have \[
x_{\alpha_{i} \rightarrow \beta_{i}}^{\sigma} \in \{w_{+}, w_{-}\},\quad x_{\alpha_{i} \rightarrow \beta_{i}}^{\sigma} = x_{\alpha_{i} \rightarrow \beta_{i}} \quad \textrm{iff}\quad \sigma(i) = 0.
\]
Clearly, $\vec{w}^{\sigma} \neq \vec{w}^{\sigma'}$ for $\sigma \neq \sigma'$. Finally, note that $\mathcal{N}(\vec{w}) = \mathcal{N}(\vec{w}^{\sigma})$ for $ \sigma \in \{0, 1\}^{N}$, while \textit{a priori} $\mathcal{N}_{f}(\vec{w})$ and $\mathcal{N}_{f}(\vec{w}^{\sigma})$ may not coincide.

The proof of Lemma \ref{lem:wellSevered} and Corollary \ref{cor:progress} also apply here, so we omit the proof.

\begin{lem}\label{lem:wellSevered2}
	We have the following.
\begin{enumerate}
\item $d(x_{\beta_{i-1}'}^{\sigma}, x_{\alpha_{i}'}^{\sigma}) > 99R$ for each $1 \le i\le k+1$.
\item $(x_{\alpha_{i}' \rightarrow \beta_{i-1}'}^{\sigma}, w_{\pm}x_0)_{x_{0}} < 0.6R$ for each $i$.
\item $(x_{\beta_{i}' \rightarrow \alpha_{i+1}'}^{\sigma}, w_{\pm}^{-1}x_0)_{x_{0}} < 0.6R$ for each $i$.
\end{enumerate}
\end{lem}

\begin{cor}\label{cor:progress2}
Let \[
(y_{2i-1}^{\sigma}, y_{2i}^{\sigma}) := (x_{\alpha_{i}}^{\sigma}, x_{\beta_{i}}^{\sigma})
\]
for $i = 1, \ldots, k$ and $y_{0} = x_{0}$, $y_{2k+1} = x_{n}$. Then $(y_{i}, y_{l})_{y_{j}} \le 0.8R$ and $d(y_{i}, y_{j}) \le d(y_{i}, y_{l}) - 95(l-j)R$ for all $0 \le i < j < l\le 2N+1$.
\end{cor}

Now we have all ingredients ready.

\begin{lem}\label{claim:step1}
Let $Q \ge 0.9$ and $n \ge 40 M R/\eta$. For each $\vec{w} \in F_{n, f}(Q)$ and $\sigma \neq \kappa \in \{0, 1\}^{N}$, if $\tau(w_{n}^{\kappa}) \le \left(2D -2\eta/M\right)n$, then $\tau(w_{n}^{\sigma}) \ge (2D +\eta/M)n$.
\end{lem}

\begin{proof}
	
We first establish a bound on $d(x_{0}, x_{n}^{\sigma})$. Note that this part does not require any assumption on $\tau(w_{n}^{\kappa})$.

\begin{claim} \label{claim:absolutediff}
	We have
	$$|d(x_0, x_n^{\kappa}) - d(x_0, x_n^{\sigma})| \le {\eta n \over 40M}$$
	and for each $i = 1, \cdots, N$, \[
	\begin{aligned}
	|d(x_n^{\kappa}, x_{\alpha_{i}'}^{\kappa}) - d(x_n^{\sigma}, x_{\alpha_{i}'}^{\sigma})|& \le {\eta n \over 40M}, \\
	|d(x_0, x_{\beta_{i}'}^{\kappa}) - d(x_0, x_{\beta_{i}'}^{\sigma})|& \le {\eta n \over 40M}.
	\end{aligned}
	\]
\end{claim}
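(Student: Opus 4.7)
The plan is to expand both $d(x_{0}, x_{n}^{\sigma})$ and $d(x_{0}, x_{n}^{\kappa})$ via Lemma~\ref{lem:almostAdditive} applied along the pivot skeleton $\beta_{0}=0, \alpha_{1}, \beta_{1}, \alpha_{2}, \beta_{2}, \ldots, \alpha_{N}, \beta_{N}, \alpha_{N+1}=n$, and then compare the two expansions term by term. Lemma~\ref{claim:wellSevered2} guarantees that consecutive gaps exceed $99R$ while every Gromov product at a skeleton vertex is bounded by $1.1R$; since $R>1000\delta$, the hypothesis $(x_{i-1},x_{i+1})_{x_{i}}+(x_{i},x_{i+2})_{x_{i+1}}\le d(x_{i},x_{i+1})-2\delta$ of Lemma~\ref{lem:almostAdditive} is comfortably satisfied. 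Thus for $\tau\in\{\sigma,\kappa\}$ I would obtain
\[
d(x_{0}, x_{n}^{\tau}) \;=\; \sum_{j} d(x_{p_{j}}^{\tau}, x_{p_{j+1}}^{\tau}) \;-\; 2\sum_{j} (x_{p_{j-1}}^{\tau}, x_{p_{j+1}}^{\tau})_{x_{p_{j}}^{\tau}} \;+\; O(N\delta),
\]
where $(p_{j})$ ranges over the skeleton indices.

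The crucial observation is that $\vec{w}^{\sigma}$ and $\vec{w}^{\kappa}$ coincide outside the pivot windows $(\alpha_{i}, \beta_{i}]$. Using the isometric $G$-action, each ``between-pivot'' distance $d(x_{\beta_{i-1}}^{\tau}, x_{\alpha_{i}}^{\tau})$ equals $d(x_{0}, g_{\beta_{i-1}+1}\cdots g_{\alpha_{i}} x_{0})$, which is $\tau$-independent and cancels in the comparison. The only distance discrepancies come from the ``in-pivot'' distances $d(x_{\alpha_{i}}^{\tau}, x_{\beta_{i}}^{\tau})\in\{d(x_{0}, w_{+}x_{0}), d(x_{0}, w_{-}x_{0})\}$, each of which is at most $M/2$ by our choice of $M$; this contributes at most $M/2$ per pivot. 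Each Gromov product term in the expansion likewise reduces under isometry to one whose first argument is $\tau$-independent and whose second argument equals $w_{\pm}^{\pm 1}x_{0}$; by parts~(2)--(3) of Lemma~\ref{claim:wellSevered2}, such Gromov products are bounded by $R$ in both paths, so each Gromov product differs between $\sigma$ and $\kappa$ by at most $R$.

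Summing these contributions over the at most $N$ pivots yields
\[
\left| d(x_{0}, x_{n}^{\sigma}) - d(x_{0}, x_{n}^{\kappa}) \right| \;\le\; N\cdot\tfrac{M}{2} + 4NR + O(N\delta).
\]
Since $M>10R\gg\delta$, the right-hand side is of order $NM$; combined with $N\le \eta n/(20 L M^{2})$ and the initial choice of $L$ (which may be enlarged by replacing $w_{\pm}$ with higher powers of $w, w'$ if needed), this is at most $\eta n/(40M)$ as claimed. The remaining two inequalities follow from the same decomposition applied to truncated skeletons ending at $\beta_{i}'$ or starting at $\alpha_{i}'$; each such truncation involves at most $N$ pivots, so the identical estimate carries through.

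I expect the principal obstacle to be the constant bookkeeping: in particular, one needs the sharper Gromov product bound of $R$ (rather than the naive $2.2R$ obtained from Lemma~\ref{claim:wellSevered2}(4) alone), which requires genuinely exploiting the finer structural information in parts~(2) and (3) of that lemma, together with the isometric reduction of each Gromov product to one involving a single alphabet $w_{\pm}^{\pm 1} x_{0}$. Once the original choice of $M$ is made generously enough relative to $R$, $L$, and $\delta$, the algebra closes cleanly.
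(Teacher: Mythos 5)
Your proposal is correct, but it takes a genuinely different route from the paper's, and the difference is instructive. You expand both $d(x_{0},x_{n}^{\sigma})$ and $d(x_{0},x_{n}^{\kappa})$ via Lemma~\ref{lem:almostAdditive} and compare the two expansions term by term, paying a per-pivot cost of roughly $M/2$ (in-pivot distance) plus $4R$ (two Gromov-product factors with the factor of $2$ from the lemma) plus $O(\delta)$. The paper instead applies Lemma~\ref{lem:almostAdditive} only once, to $\vec{w}^{\sigma}$, and then \emph{drops} the entire quantity $\sum d(x_{\alpha_{i}}^{\sigma},x_{\beta_{i}}^{\sigma})-2\sum(\cdots)-4N\delta \ge 0$ before switching to $\kappa$; for $\kappa$ it uses nothing but the triangle inequality, recovering $d(x_{0},x_{n}^{\kappa})$ after reinserting the $\kappa$ in-pivot distances at a cost of $0.5M$ each. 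This asymmetric argument never compares Gromov products between the two paths at all, so the total loss is $0.5MN\le \eta n/(40ML)$, valid as soon as $L\ge 1$. Your tally $NM/2+4NR+O(N\delta)$ is strictly larger; chasing the constants, the inequality $\frac{\eta n}{20LM^{2}}(M/2+4R+O(\delta))\le\frac{\eta n}{40M}$ reduces to $M+8R+O(\delta)\le LM$, which fails for $L=1$ and needs $L\ge 2$. You flag this and correctly note that $L$ can be enlarged (replacing $w_{\pm}$ by higher powers, which also enlarges $M$ compatibly), so there is no real gap; the paper's one-sided bookkeeping is simply sharper and needs no adjustment of the initial constant choices. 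Both approaches handle the partial-sum cases identically.
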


\begin{proof}[Proof of the Claim]
	By Lemma \ref{lem:almostAdditive}, we have
\[\begin{aligned}
d(x_{0}, x_{n}^{\sigma}) &\ge \sum_{i=1}^{N+1} d(x_{\beta_{i-1}}^{\sigma}, x_{\alpha_{i}}^{\sigma}) + \sum_{i=1}^{N} d(x_{\alpha_{i}}^{\sigma}, x_{\beta_{i}}^{\sigma}) \\ 
& \quad - 2 \sum_{i=1}^{N} (x_{\beta_{i}}^{\sigma}, x_{\beta_{i-1}}^{\sigma})_{x_{\alpha_{i}}^{\sigma}} - 2 \sum_{i=1}^{N} (x_{\alpha_{i}}^{\sigma}, x_{\alpha_{i+1}}^{\sigma})_{x_{\beta_{i}}^{\sigma}} - 2 \cdot 2N \cdot \delta \\
& \ge \sum_{i=1}^{N+1} d(x_{\beta_{i-1}}^{\sigma}, x_{\alpha_{i}}^{\sigma}) = \sum_{i=1}^{N+1} d(x_{\beta_{i-1}}^{\kappa}, x_{\alpha_{i}}^{\kappa})
\end{aligned}
\]

Recall also that $d(x_{\alpha_{i}}^{\kappa}, x_{\beta_{i}}^{\kappa})$ is either $d(x_0, w_{+} x_0)$ or $d(x_0, w_{-} x_0)$, which are both smaller than $M/2$. 
Since $N$ is chosen to be less than $\frac{\eta n}{20M^2L}$, we have

\[\begin{aligned}
d(x_0, x_n^{\sigma}) &\ge \sum_{i=1}^{N+1} d(x_{\beta_{i-1}}^{\kappa}, x_{\alpha_{i}}^{\kappa}) + \sum_{i=1}^{N} d(x_{\alpha_{i}}^{\kappa}, x_{\beta_{i}}^{\kappa}) - 0.5M \cdot \frac{\eta n}{20 M^{2} L}\\
& \ge d(x_{0}, x_{n}^{\kappa}) - \frac{\eta n}{40ML} \\
&\ge d(x_{0}, x_{n}^{\kappa}) - \frac{\eta n}{40M}.
\end{aligned}
\]
A symmetric argument shows that $d(x_{0}, x_{n}^{\sigma}) \le d(x_{0}, x_{n}^{\kappa}) + \eta n/40M$. Similar arguments involving partial sums give the remaining results.
\end{proof}

Now suppose $\tau(w_n^{\kappa}) \le (2D - 2\eta/M)n$. We observe:

\begin{claim} \label{claim:Dnestimate}
	\begin{equation}\label{eqn:Dnestimate}
	(x_{n}^{\kappa}, x_{n \rightarrow 0}^{\kappa})_{x_{0}} \ge \frac{1}{2} d(x_{0}, x_{n})- Dn.
	\end{equation}
\end{claim}

\begin{proof}[Proof of the Claim]
Recall that $M > L$, $R>1+3\delta$ and $n > 40 M R/\eta$. This implies $1/40 M < R/L$, and consequently,
\begin{equation}\label{eqn:3delta}
3 \delta < R < \frac{\eta n}{40M} <  Dn.
\end{equation}
Now suppose that Inequality \ref{eqn:Dnestimate} does not hold. This implies 
  \[\begin{aligned}
d(x_{0}, x_{n}^{\kappa}) - 2(x_{n}^{\kappa}, x_{n \rightarrow 0}^{\kappa})_{x_{0}}  &\ge \left[d(x_{0}, x_{n}) - \frac{\eta n}{40 M}\right]- 2(x_{n}^{\kappa}, x_{n \rightarrow 0}^{\kappa})_{x_{0}}  & (\textrm{$\because$ Claim \ref{claim:absolutediff}})\\
&> 2Dn - \frac{\eta n}{40 M}  > \frac{\eta n}{40 M}> 3\delta.
\end{aligned}
\]
Then from Corollary \ref{cor:translationlength}, we have \begin{equation}\label{eqn:DnestimateEqn}
\tau(w_{n}^{\kappa}) \ge d(x_{0}, x_{n}^{\kappa}) - 2(x_{n}^{\kappa}, x_{n\rightarrow0}^{\kappa})_{x_0} -2 \delta
\ge \left(2D - \frac{\eta}{40 M} - \frac{\eta}{40 M}\right)n.
\end{equation}
This contradicts the assumption that $\tau(w_n^{\kappa}) \le (2D - 2\eta/M)n$.
\end{proof}

Fixing $\sigma \in \{0, 1\}^{N}$ other than $\kappa$, we now estimate the Gromov products among four points $x_n^{\kappa}, x_n^{\sigma}, x_{n \to 0}^{\kappa},$ and $x_{n \to 0}^{\sigma}$ based at $x_0$. See Figure \ref{fig:pivoting}.

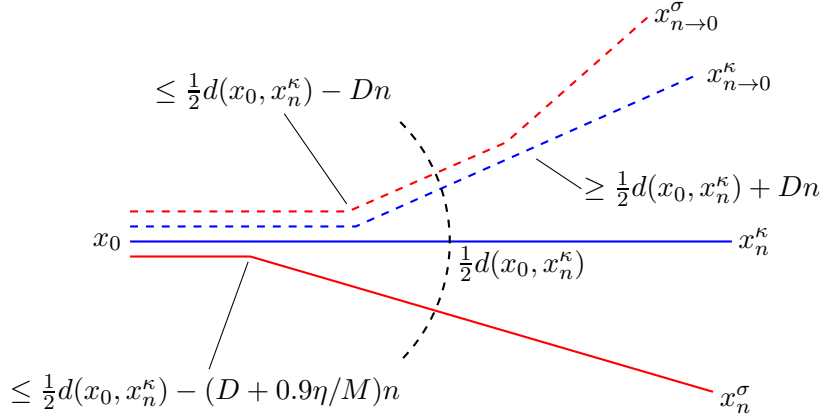
\begin{figure}[h]
	\begin{tikzpicture}
	\draw[blue, thick] (1, 0) -- (9, 0);
	\draw[blue, thick, dashed] (1, 0.2) -- (4, 0.2) -- (8.5, 2.2);
	
	\draw[red, thick] (1, -0.2) -- (2.6, -0.2) -- (8.75, -2);
	\draw[red, thick, dashed] (1, 0.4) -- (3.9, 0.4) -- (6, 1.32) -- (7.9, 3);
	
	\draw[thick, dashed] (5.2-2.2+1.590990257669732, 1.590990257669732) arc (45:-45:2.25);
	
	\draw (9.3, 0) node {$x_{n}^{\kappa}$};
	\draw (9.1, 2.2) node {$x_{n\rightarrow 0}^{\kappa}$};
	\draw (8.4, 3) node {$x_{n \rightarrow 0}^{\sigma}$};
	\draw (9.05, -2.1) node {$x_{n}^{\sigma}$};
	\draw (0.7, 0) node {$x_{0}$};
	\draw (6.2, -0.3) node {$\frac{1}{2} d(x_{0}, x_{n})$};
	\draw (8.6, 0.7) node {$\ge \frac{1}{2} d(x_{0}, x_{n}) + Dn$};
	\draw (2.9, 2) node {$\ge \frac{1}{2} d(x_{0}, x_{n}) - Dn$};
	\draw (2, -2) node {$\le \frac{1}{2} d(x_{0}, x_{n}) - (D + 0.8 \eta/M)n$};
	
	\draw (2.05, -1.7) -- (2.57, -0.27);
	\draw (3.05, 1.7) -- (3.88, 0.5);
	\draw (7, 0.7) -- (6.4, 1.1);

	\end{tikzpicture}
	\caption{4 segments in the pivoting process and the Gromov products.}
	\label{fig:pivoting}
\end{figure}

\begin{claim} \label{claim:halfestimate}
	We have \begin{equation}\label{eqn:halfestimate}(x_{n\rightarrow 0}^{\kappa}, x_{n\rightarrow 0}^{\sigma})_{x_0} \ge \frac{1}{2} d(x_{0}, x_{n}) + Dn.
	\end{equation}
\end{claim}

\begin{proof}[Proof of the Claim]
Since $\vec{w} \in F_{n, f}$, $B_{N}=p_{N}$ belongs to $\mathcal{N}_{f}(\vec{w})$. Note that the steps $(g_{i})_{i}$ of $\vec{w}$ after $\beta_{N}$ are not altered by the pivoting; thus, we have \[
x_{n \rightarrow \beta_{N}}^{\kappa} =x_{n \rightarrow \beta_{N}}^{\sigma} = x_{n \rightarrow \beta_{N}}.
\]
Moreover, Claim \ref{claim:absolutediff} and $p_{N} \in \mathcal{N}_{f}(\vec{w})$ imply\[\begin{aligned}
d(x_{n \rightarrow 0}^{\kappa}, x_{n \rightarrow \beta_{N}}^{\kappa}) &= d(x_{0}^{\kappa}, x_{\beta_{N}}^{\kappa} )\le d(x_{0}, x_{\beta_{N}}) + \frac{\eta n}{40 M} \\
&\le \frac{1}{2} d(x_{0}, x_{n}) - (D + Q \eta / M) n + \frac{\eta n}{40 M}.
\end{aligned}
\]
The same upper bound also applies to $d(x_{n \rightarrow 0}^{\sigma}, x_{n \rightarrow \beta_{N}}^{\sigma})$. Hence, \[
\begin{aligned}
d(x_{n \rightarrow 0}^{\kappa}, x_{n\rightarrow 0}^{\sigma}) &\le d(x_{n \rightarrow 0}^{\kappa}, x_{n \rightarrow \beta_{N}}^{\kappa}) + d(x_{n \rightarrow \beta_{N}}^{\kappa}, x_{n \rightarrow \beta_{N}}^{\sigma}) + d(x_{n \rightarrow \beta_{N}}^{\sigma}, x_{n \rightarrow 0}^{\sigma}) \\
&\le 2 \cdot \left(\frac{1}{2} d(x_{0}, x_{n})  - (D + Q \eta /M) n + \frac{\eta n}{40M}\right).
\end{aligned}
\]
This inequality and Claim \ref{claim:absolutediff} together yield \[ \begin{aligned}
(x_{n \rightarrow 0}^{\kappa}, x_{n \rightarrow 0}^{\sigma})_{x_{0}} &\ge \frac{1}{2} [d(x_{0}, x_{n}^{\kappa}) + d(x_{0}, x_{n}^{\sigma})] - \left(\frac{1}{2} d(x_{0}, x_{n})  - (D + Q \eta /M) n + \frac{\eta n}{40M}\right) \\
&\ge d(x_{0}, x_{n}) - \frac{\eta n}{40 M} - \left(\frac{1}{2} d(x_{0}, x_{n})  - (D + Q \eta /M) n + \frac{\eta n}{40M}\right) \\
&\ge \frac{1}{2} d(x_{0}, x_{n}) + Dn  + \left( \frac{0.9 \eta}{M} - \frac{\eta}{20 M} \right) n. \qedhere
\end{aligned}
\]
\end{proof}

In contrast, we get the following inequality. 

\begin{claim} \label{claim:0.9estimate}
	We have
	$$(x_{n}^{\kappa}, x_{n}^{\sigma})_{x_{0}} \le \frac{1}{2} d(x_{0}, x_{n}) -(D + 0.8\eta/M)n.$$
\end{claim}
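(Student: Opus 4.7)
The plan is to locate the smallest pivot index $i_0$ at which $\kappa$ and $\sigma$ disagree, to exploit the transverse independence of $w_+$ and $w_-$ at that pivot so that the two sample paths diverge sharply there, and to bound the Gromov product $(x_n^\kappa, x_n^\sigma)_{x_0}$ by the distance from $x_0$ to the common branching point. Since $\kappa$ and $\sigma$ agree on all pivots $j < i_0$, the pivoted words coincide through step $\alpha_{i_0}$, so the point $p := x_{\alpha_{i_0}}^\kappa = x_{\alpha_{i_0}}^\sigma$ is well-defined. Moreover, at $p$ the next block of length $L$ is $w_+$ on one side and $w_-$ on the other, so after translating the basepoint to $p$ the choice of $R$ from Subsection \ref{subsec:prelim} yields
\[
(x_{\beta_{i_0}}^\kappa, x_{\beta_{i_0}}^\sigma)_p < R/2.
\]

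I then appeal to Lemma \ref{lem:deviation1}(2) with its $o, z$ playing the roles of $p, x_0$ and with its $(x, x', y, y')$ in the roles of $(x_{\beta_{i_0}}^\kappa, x_n^\kappa, x_{\beta_{i_0}}^\sigma, x_n^\sigma)$. The five Gromov-product hypotheses decompose into the transverse bound displayed above, two bounds of the form $(x_0, x_{\beta_{i_0}}^\bullet)_p < 1.1R$, and two bounds of the form $(x_n^\bullet, p)_{x_{\beta_{i_0}}^\bullet} < 1.1R$, all supplied by Lemma \ref{claim:wellSevered2}(4) applied to $\vec{w}^\kappa$ or $\vec{w}^\sigma$ separately. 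The pairwise-distance hypotheses follow from Lemma \ref{claim:wellSevered2}(1) together with $d(x_0, w_\pm x_0) \ge 100R$ and standard Gromov-product manipulations. The conclusion is
\[
(x_n^\kappa, x_n^\sigma)_{x_0} \le d(x_0, p) + 1.2R.
\]

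It remains to bound $d(x_0, p) = d(x_0, x_{\alpha_{i_0}}^\kappa)$. The pivot segment at $i_0$ has length at most $M/2$ by the choice of $M$, so $d(x_0, p) \le d(x_0, x_{\beta_{i_0}}^\kappa) + M/2$. Claim \ref{claim:absolutediff}, applied with the unpivoted word in place of $\sigma$ exactly as it was used in Claim \ref{claim:halfestimate}, lets me replace $x_{\beta_{i_0}}^\kappa$ and $x_n^\kappa$ by their unpivoted counterparts at a cost of $\eta n/(40M)$ each. Inserting the $\mathcal{N}_f$-condition $d(x_0, x_{\beta_{i_0}}) \le \frac{1}{2} d(x_0, x_n) - (D + Q\eta/M)n$ (which holds because $p_{i_0} \in \mathcal{N}_f(\vec{w}; Q)$) and collecting terms yields
\[
d(x_0, p) \le \tfrac{1}{2} d(x_0, x_n^\kappa) - Dn - (Q - 3/80)\eta n/M + M/2.
\]
For $Q \ge 0.9$ we have $Q - 3/80 \ge 0.8625$, so the surplus $(Q - 3/80 - 0.8)\eta n/M \ge 0.0625\, \eta n/M$ eventually absorbs the bounded terms $M/2 + 1.2R$, and the claim follows for all sufficiently large $n$.

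The main obstacle is simply bookkeeping: one must verify the five Gromov-product and the fifteen pairwise-distance hypotheses needed to invoke Lemma \ref{lem:deviation1}(2), and then carefully collect the various $\eta n/(40M)$ and $M/2$ error terms so that the margin $0.8625 > 0.8$ provides enough slack to absorb everything for large $n$.
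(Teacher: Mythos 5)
Your proof is correct and follows the paper's route essentially step for step: locate the first index $i_0$ at which $\kappa$ and $\sigma$ disagree, apply Lemma~\ref{lem:deviation1}(2) to the same six points (the paper phrases this in the shifted coordinates $x_{\alpha_k \to\,\cdot}$ while you stay in absolute coordinates centered at $x_0$, but the assignments coincide under the isometry $w_{\alpha_k}$ and the output $(x_n^\kappa,x_n^\sigma)_{x_0}\le d(x_0,x_{\alpha_{i_0}}^\kappa)+1.2R$ is the same), verify the Gromov-product hypotheses via Lemma~\ref{claim:wellSevered2}(4), and finish with the $\mathcal{N}_f$-condition. In fact you are a touch more careful than the paper on the final distance estimate: the paper writes $x_{\alpha_k}^\kappa=x_{\alpha_k}$, which only holds when $\kappa$ vanishes on all earlier pivots, whereas you correctly detour through $x_{\beta_{i_0}}^\kappa$ at the cost of an additive $M/2$ and then invoke Claim~\ref{claim:absolutediff} against the unpivoted $x_{\beta_{i_0}}$, which is exactly the quantity the $\mathcal{N}_f$-condition controls. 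Your resulting slack $0.0625\,\eta n/M$ is a hair smaller than the paper's $\tfrac{7}{80}\eta n/M$, but it is still positive, so the leftover constants are absorbed for large $n$ just as you say.
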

\begin{proof}[Proof of the Claim]
To see this, let $l$ be the minimum among $\{1, \ldots, N\}$ such that $\sigma(l) \neq \kappa(l)$. We let: \[
y_{0} = x_{n}^{\kappa}, \quad y_{1} = x_{\beta_{l}}^{\kappa}, \quad y_{2} = x_{\alpha_{l}}^{\kappa} = x_{\alpha_{l}}^{\sigma}, \quad y_{3} = x_{\beta_{l}}^{\sigma}, \quad y_{4} = x_{n}^{\sigma}.
\]
Then Corollary \ref{cor:progress2} tells us that $d(y_{i-1}, y_{i}) > 99R$ for $i = 1, \ldots, 4$ and $(y_{0}, y_{2})_{y_{1}}, (y_{2}, y_{4})_{y_{3}} < 0.8R$. We also have $(y_{1}, y_{3})_{y_{2}} = (w_{+} x_{0}, w_{-}x_{0})_{x_{0}} \le R_{1} <0.5R$. 
Hence, we have\[
(y_{i-1}, y_{i+1})_{y_{i}} + (y_{i}, y_{i+2})_{y_{i+1}} \le 1.3R < 99R - 3\delta \le d(y_{i}, y_{i+1})
\]
for $i=1, 2$. We then apply Lemma \ref{lem:almostAdditive} and deduce $(y_{0}, y_{4})_{y_{2}} < 0.5R + 2\delta < 0.6R$. This implies  \begin{equation}\label{eqn:claim0.9estimate1}\begin{aligned}
2(x^{\kappa}_{n}, x_{n}^{\sigma})_{x_{0}} &= d(x_{0}, y_{0}) + d(x_{0}, y_{4}) - d(y_{0}, y_{4}) \\
&\le \big(d(x_{0}, y_{2}) + d(y_{2}, y_{0}) \big)+ \big(d(x_{0}, y_{2}) + d(y_{2}, y_{4}) \big) \\
& \quad - \big(d(y_{0}, y_{2}) + d(y_{2}, y_{4}) - 2(y_{0}, y_{4})_{y_{2}}\big) \\
&\le 2d(x_{0}, y_{2}) + 2 \cdot 0.6R.
\end{aligned}
\end{equation}

Meanwhile, the distance from $x_{0}$ and $y_{2} = x_{\alpha_{l}}^{\kappa}$ is estimated as follows:\[\begin{aligned}
d(x_{0}, x_{\alpha_{l}}^{\kappa}) &\le d(x_{0}, x_{\beta_{N}}^{\kappa})  & (\textrm{$\because$ Corollary \ref{cor:progress2}})\\
&\le d(x_{0}, x_{\beta_{N}}) + \frac{\eta n}{40 M} & (\textrm{$\because$ Claim \ref{claim:absolutediff}})\\
&\le \frac{1}{2} d(x_{0}, x_{n}) - \left( D + \frac{Q \eta}{M} \right) n + \frac{\eta n}{40M} & \left(p_{N} \in \mathcal{N}_{f}(\vec{w})\right).
\end{aligned}
\]
By plugging this into Inequality \ref{eqn:claim0.9estimate1}, we obtain
 \[\begin{aligned}
(x^{\kappa}_{n}, x_{n}^{\sigma})_{x_{0}}& \le \frac{1}{2} d(x_{0}, x_{n}) - \left( D + \frac{Q \eta}{M} \right)n + \frac{ \eta n}{40 M} + 0.6 R \\
&\le \frac{1}{2}  d(x_{0}, x_{n}) - \left( D + \frac{0.9 \eta}{M} \right) n + \frac{0.05 \eta n}{M} + \frac{0.02n\eta}{M} \\
&\le \frac{1}{2} d(x_{0}, x_{n}) - \left( D + \frac{0.8 \eta}{M} \right) n
\end{aligned}
\]
for $n > 40R M/\eta$ as desired.
\end{proof}

Now let us finish the proof of Lemma \ref{claim:step1}. If we have \begin{equation}\label{eqn:claimStep1Final}
(x_{n}^{\sigma}, x_{n \rightarrow 0}^{\sigma})_{x_{0}} \ge \frac{1}{2} d(x_{0}, x_{n}) - \left(D + \frac{0.7 \eta }{M} \right)n,
\end{equation} then we have \[\begin{aligned}
(x_{n}^{\kappa}, x_{n}^{\sigma})_{x_{0}} &\ge \min \{ (x_{n}^{\kappa}, x_{n \rightarrow 0}^{\kappa})_{x_{0}}, (x_{n\rightarrow 0}^{\kappa}, x_{n}^{\sigma})_{x_{0}} \} - \delta & (\textrm{$\because$ Property \ref{property:Gromov}}) \\
&\ge \min \left\{ \begin{array}{c} (x_{n}^{\kappa}, x_{n \rightarrow 0}^{\kappa})_{x_{0}}, (x_{n \rightarrow 0}^{\kappa}, x_{n \rightarrow 0}^{\sigma})_{x_{0}}, \\(x_{n \rightarrow 0}^{\sigma}, x_{n}^{\sigma})_{x_{0}} \end{array}\right\} - 2\delta & (\textrm{$\because$ Property \ref{property:Gromov}})\\
&\ge \frac{1}{2} d(x_{0}, x_{n}) - \left(D + \frac{0.7 \eta}{M} \right)n - 2\delta & \left(\because \begin{array}{c} \textrm{Claim \ref{claim:Dnestimate},}\\ \textrm{ Claim \ref{claim:halfestimate}} \end{array}\right)\\
&> \frac{1}{2} d(x_{0}, x_{n}) -  \left(D + \frac{0.8 \eta }{M} \right)n . & (\textrm{$\because$ Inequality \ref{eqn:3delta}})
\end{aligned}
\]
This contradicts Claim \ref{claim:0.9estimate}. Therefore, Inequality \ref{eqn:claimStep1Final} does not hold and \[\begin{aligned}
d(x_{0}, x_{n}^{\sigma}) - 2(x_{n}^{\sigma}, x_{n \rightarrow 0}^{\sigma})_{x_{0}} &\ge \left[d(x_{0}, x_{n}) - \frac{\eta n}{40 M}\right] -  2(x_{n}^{\sigma}, x_{n \rightarrow 0}^{\sigma})_{x_{0}}\\
&\ge 2\left( D + \frac{0.7 \eta}{M} \right)n - \frac{\eta n}{40 M} \\
&\ge \left(2D + \eta/M\right) n + 3\delta > 3\delta. & (\textrm{$\because$ Inequality \ref{eqn:3delta}})
\end{aligned}
\]
Then Corollary \ref{cor:translationlength} tells us that $\tau(\w_{n}^{\sigma}) \ge \left(2D + \eta/M\right) n$.
\end{proof}

In particular, for $\vec{w} \in G_{n, f}(Q)$, $\vec{w}^{\sigma} \notin G_{n}$ for any nontrivial $\sigma$. Similar discussion holds for $F_{n, b}$ and $G_{n, b}$.

In the following crucial observation, we finally set the optimal value for $Q$ and $N(\vec{w})$.

\begin{lem}\label{claim:step2}
Let  $Q=1$ and $n \ge 40 RM/\eta$. Suppose that $\vec{w}, \vec{w}' \in G_{n, f}(Q=1)$ and the numbers of pivots $N(\vec{w})$, $N(\vec{w}')$ are $\lfloor \frac{\eta n}{40 M^{2}L}\rfloor$. Then for $\sigma, \sigma' \in \{0, 1\}^{N}$, $\vec{w}^{\sigma} = \vec{w}'^{\sigma'}$ if and only if $\vec{w}= \vec{w}'$ and $\sigma = \sigma'$.
\end{lem}

\begin{proof}

Let $\vec{v} = \vec{w}^{\sigma} = \vec{w}'^{\sigma'}$. The key idea here is to change the roles of $\vec{w}$, $\vec{w}'$ and $\vec{v}$. First note that $$\mathcal{N}_{f}(\vec{w}; Q=1) \subseteq \mathcal{N}(\vec{w}) \subseteq \mathcal{N}(\vec{v}=\vec{w}^{\sigma}).$$ 
Moreover, for each $n_{i} \in \mathscr{N}_{f} (\vec{w}; Q=1)$ we have \[\begin{aligned}
d(x_{0}, x_{\beta_{i}}^{\sigma})&\le d(x_{0}, x_{\beta_{i}}) + \frac{\eta n}{40M} & (\textrm{$\because$ Claim \ref{claim:absolutediff}}) \\
&\le \frac{1}{2} d(x_{0}, x_{n})+ \frac{\eta n}{40M}- \left(D + \frac{\eta}{M}\right)n & \left(\because p_{i} \in \mathcal{N}_{f}(\vec{w}; Q = 1)\right) \\
&\le \frac{1}{2} d(x_{0}, x_{n}^{\sigma}) + \frac{3\eta n}{80M} - \left(D + \frac{\eta}{M}\right)n & (\textrm{$\because$ Claim \ref{claim:absolutediff}})\\
&\le \frac{1}{2} d(x_{0}, x_{n}^{\sigma})- \left(D + \frac{0.9\eta}{M}\right)n.
\end{aligned}
\]
This implies $n_{i} \in \mathcal{N}_{f}(\vec{v}; Q = 0.9)$. It follows that $$\mathcal{N}_{f}(\vec{w}; Q=1) \subseteq \mathcal{N}_{f}(\vec{v}; Q = 0.9)\quad \mbox{and} \quad  \vec{v} \in F_{n, f}(Q=0.9).$$ Similarly, $\mathcal{N}_{f}(\vec{w}'; Q=1) \subseteq \mathcal{N}_{f} (\vec{v}; Q=0.9)$.

Thus, we are able to pick forward pivots $p_{i}(\vec{w})$ and $p'_{i}(\vec{w}')$ of $\vec{w}$ and $\vec{w}'$ altogether for $\vec{v}$. (This will give $N(\vec{v}) \le \frac{\eta n}{20 M^{2}L}$ which is legitimate.) Then Lemma \ref{claim:step1} applied to $\vec{v} \in F_{n, f}(Q=0.9)$ yields a contradiction with $\vec{w}, \vec{w}' \in G_{n}$ unless $\sigma = \sigma'$.
\end{proof}

Similar discussion also holds for $G_{n, b}$.

\subsection{Translation lengths of random isometries}

We now prove the first main theorem:

\mainthm*

\begin{proof}
Let $n \ge 40 RM / \eta$. By Lemma \ref{lem:setsize}, $F_{n, f}(Q=1)$ and $F_{n, b}(Q=1)$ cover entire $F_{n}$. Consequently, $G_{n, f}(Q=1)$ and $G_{n, b}(Q=1)$ cover $G_{n}$.

As in Lemma \ref{claim:step2}, we let $N(\vec{w})=\lfloor \frac{\eta n}{40 M^{2} L} \rfloor$ for each $\vec{w} \in G_{n, f}(Q=1)$. Let $\mathcal{C}(\vec{w}) := \{ \vec{w}^{\sigma} : \sigma \in \{0, 1\}^{N}\}$. Lemma \ref{claim:step2} asserts that $\mathcal{C}(\vec{w})$ and $\mathcal{C}(\vec{w}')$ are disjoint for distinct elements $\vec{w}, \vec{w}' \in G_{n, f}(Q=1)$. Moreover, for each $\vec{w} \in G_{n, f}(Q=1)$, the conditional probability of $\vec{w}$ in $\mathcal{C}(\vec{w})$ is bounded by $P^{N}$. Indeed, elements $\vec{v} = (v_{i})_{i=1}^{n}$ in $\mathcal{C}(\vec{w})$ are determined by $N$ independent choices at pivotal times, with probability $p_{+}$ for $(v_{\alpha_{i} + (j-1)}^{-1} v_{\alpha_{i} + j})_{j=1}^{L} = (a_{1}, \ldots , a_{n})$ and with probability $p_{-}$ for $(v_{\alpha_{i} + (j-1)}^{-1} v_{\alpha_{i} + j})_{j=1}^{L} = (b_{1}, \ldots, b_{n})$. Since $\vec{w}$ corresponds to a single outcome, its probability is at most $P^{N}$.

This implies that $$\begin{aligned}
\Prob((\omega_{i})_{i=1}^{n} \in G_{n, f}(Q=1)) & = \sum_{w \in G_{n, f}(Q=1)} \Prob((\omega_{i})_{i=1}^{n} = \vec{w})\\
& \le P^{N} \sum_{w \in G_{n, f}(Q=1)} \Prob((\omega_{i})_{i=1}^{n} \in \mathcal{C}(\vec{w})) \\
& \le P^{N},
\end{aligned}$$
where the disjointness of $\mathcal{C}(\vec{w})$ was used at the end. Similarly, we have $$\Prob((\omega_{i})_{i=1}^{n} \in G_{n, b}(Q=1)) \le P^N.$$ Then the Borel-Cantelli lemma guarantees that $\mathbb{P}$-a.e. sample path $\w$ avoids $G_{n}$ eventually. Pictorially, we have Figure \ref{fig:avoid}.

\begin{figure}[h]
	\def\c{2.8}
	\begin{tikzpicture}[scale=1, every node/.style={scale=0.8}]

	\draw[thick] (4*\c - 0.5, 3) -- (0, 3) -- (0, -3) -- (4*\c - 0.5, -3);
	\foreach \i in {1, 2, 3}{
		\draw[thick, dashed] (\c*\i, 3.5) -- (\c*\i, -3.5);
	}
	\draw[thick] (0, 1.5) -- (\c, 1.5) -- (\c, 2) -- (2*\c, 2) -- (2*\c, 2.3) -- (3*\c, 2.3) -- (3*\c, 2.5) -- (4*\c - 0.5, 2.5);

	\fill[blue!30] (0, 0.2) -- (\c, 0.2) -- (\c, 1.2) -- (0, 1.2) -- cycle;
	\fill[pattern=north west lines, thick, pattern color = blue!45] (0, -0.8) -- (\c, -0.8) -- (\c, -1.5) -- (0, -1.5) -- cycle;

	\fill[blue!30] (\c, 1) -- (2*\c, 1) -- (2*\c, 1.7) -- (\c, 1.7) -- cycle;
	\fill[pattern=north west lines, thick, pattern color = blue!45] (\c, 0.3) -- (2*\c, 0.3) -- (2*\c, 0.7) -- (\c, 0.7) -- cycle;
	
	\fill[red!22] (\c, -2.8) -- (\c*2, -2.8) -- (\c*2, -2.3) -- (\c, -2.3);
	\fill[pattern=north west lines, thick, pattern color = red!45] (\c, -1.2) -- (2*\c, -1.2) -- (2*\c, -0.9) -- (\c, -0.9);
	
	\fill[black!20!green!35] (\c, -0.7) -- (\c*2, -0.7) -- (\c*2, 0.1) -- (\c, 0.1);
	\fill[pattern=north west lines, thick, pattern color = black!20!green] (\c, -1.4) -- (2*\c, -1.4) -- (2*\c, -2) -- (\c, -2) -- cycle;

	\fill[blue!30] (2*\c, 1.8) -- (3*\c, 1.8) -- (3*\c, 2.15) -- (2*\c, 2.15);
	\fill[pattern=north west lines, thick, pattern color = blue!45] (2*\c, 0.5) -- (3*\c, 0.5) -- (3*\c, 0.8) -- (2*\c, 0.8) -- cycle;
	\fill[pattern=north west lines, thick, pattern color = blue!45] (2*\c, 0.2) -- (3*\c, 0.2) -- (3*\c, 0.35) -- (2*\c, 0.35) -- cycle;
	\fill[pattern=north west lines, thick, pattern color = blue!45] (2*\c, -1.5) -- (3*\c, -1.5) -- (3*\c, -2) -- (2*\c, -2) -- cycle;
	
	\fill[red!22] (2*\c, 1) -- (3*\c, 1) -- (3*\c, 1.2) -- (2*\c, 1.2);
	\fill[pattern=north west lines, thick, pattern color = red!45] (2*\c, 1.5) -- (3*\c, 1.5) -- (3*\c, 1.6) -- (2*\c, 1.6) -- cycle;
	\fill[pattern=north west lines, thick, pattern color = red!45] (2*\c, -0.1) -- (3*\c, -0.1) -- (3*\c, -0.35) -- (2*\c, -0.3) -- cycle;
	\fill[pattern=north west lines, thick, pattern color = red!45] (2*\c, -2.15) -- (3*\c, -2.15) -- (3*\c, -2.3) -- (2*\c, -2.3) -- cycle;
	
	\fill[black!20!green!35] (2*\c, -0.5) -- (3*\c, -0.5) -- (3*\c, -0.9) -- (2*\c, -0.9);
	\fill[pattern=north west lines, thick, pattern color = black!20!green] (2*\c, -1) -- (3*\c, -1) -- (3*\c, -1.1) -- (2*\c, -1.1) -- cycle;
	\fill[pattern=north west lines, thick, pattern color = black!20!green] (2*\c, -2.35) -- (3*\c, -2.35) -- (3*\c, -2.5) -- (2*\c, -2.5) -- cycle;
	\fill[pattern=north west lines, thick, pattern color = black!20!green] (2*\c, -2.8) -- (3*\c, -2.8) -- (3*\c, -2.9) -- (2*\c, -2.9) -- cycle;

	\fill[red!20!blue!20!yellow!35] (2*\c, -1.2) -- (3*\c, -1.2) -- (3*\c, -1.4) -- (2*\c, -1.4);
	\fill[pattern=north west lines, thick, pattern color = black!20!yellow] (2*\c, 1.3) -- (3*\c, 1.3) -- (3*\c, 1.45) -- (2*\c, 1.45) -- cycle;
	\fill[pattern=north west lines, thick, pattern color = black!20!yellow] (2*\c, 0) -- (3*\c, 0) -- (3*\c, 0.15) -- (2*\c, 0.15) -- cycle;
	\fill[pattern=north west lines, thick, pattern color = black!20!yellow] (2*\c, -2.7) -- (3*\c, -2.7) -- (3*\c, -2.55) -- (2*\c, -2.55) -- cycle;
	
	\draw[thick, blue!70, ->] (3*\c + 0.15, 1.85) -- (3*\c + 0.35, 1.85) -- (3*\c + 0.35, 0.65) -- (3*\c + 0.15, 0.65);
	\draw[thick, blue!70, ->] (3*\c + 0.15, 1.95) -- (3*\c + 0.45, 1.95) -- (3*\c + 0.45, 0.275) -- (3*\c + 0.15, 0.275);
	\draw[thick, blue!70, ->] (3*\c + 0.15, 2.05) -- (3*\c + 0.55, 2.05) -- (3*\c + 0.55, -1.75) -- (3*\c + 0.15, -1.75);
	\foreach \i in {1,2, 3}{
		\fill (3*\c + 0.5+0.27*\i, 0) circle (0.04);
	}

	\draw (2.5*\c, 2.2) node[below] {$\vec{w} \in G_{3}$};
	\draw (0.5*\c, 2.25) node {$\exists k \ge 1\, [(\w_{i}) \notin F_{k}]$};
	\draw (1.5*\c, 2.5) node {$\exists k \ge 2 \,[(\w_{i})\notin F_{k}]$};
	\draw (2.5*\c, 2.62) node {$\exists k \ge 3\, [(\w_{i}) \notin F_{k}]$};
	\draw (2.5*\c, -1.75) node {$\vec{w}^{\sigma} \notin G_{3}$};
	\end{tikzpicture}
	\caption{Schematic for $F_{n}$ and $G_{n}$. Each colored region corresponds to $\{\w : (\w_{i})_{i=1}^{k} \in F_{k}$ for $k \ge n$, $(\w_{i})_{i=1}^{n} = \vec{w} \in G_{n}\}$, and is copied inside $\{\w : (\w_{i})_{i=1}^{k} \in F_{k}$ for $k \ge n\}$ by pivoting (hatched regions). Copies of distinct words in $G_{n}$ (colored in distinct colors) are disjoint. The sum of the measures of colored regions is bounded. Note that $\{\w : (\w_{i})_{i=1}^{k} \notin F_{k}$ for some $k \ge n\}$ decreases to a measure zero set.} \label{fig:avoid}
\end{figure}
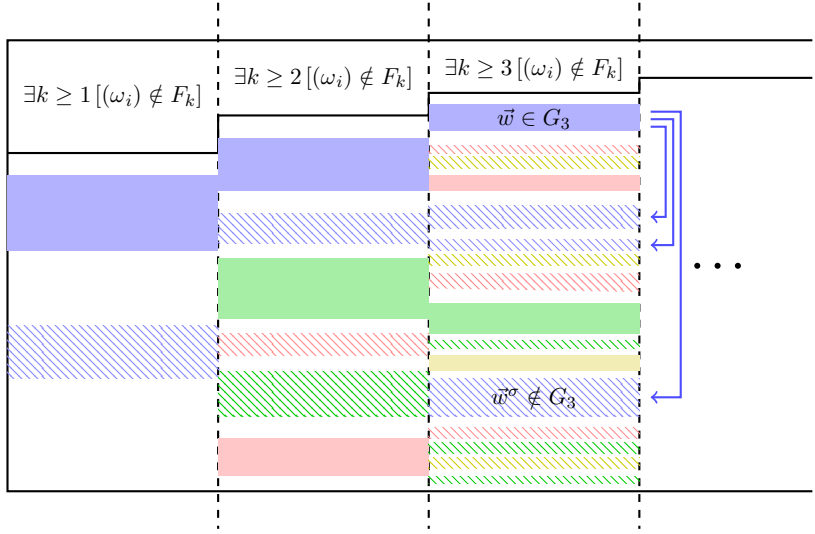

For those paths, if $\tau(\w_n) \le {1.5R \eta n \over L} < (2D - 2\eta/M)n$ happens infinitely often, then $(\w_1, \ldots, \w_{n})$ should avoid $F_n$ infinitely often. However, as mentioned before, almost every sample path avoids $F_{n}$ eventually. Consequently, $\tau(\w_n) \ge {1.5R \eta n \over L}$ eventually holds for $\Prob$-a.e. $(\w_n)$.

Now let us prove the second assertion. When $\mu$ has finite first moment, there exists a constant $0<\lambda < +\infty$, called \emph{drift}, satisfying that $\lambda = \lim_{k \to \infty} {1 \over k}d(x_0, \w_k x_0)$ for $\Prob$-a.e. $(\w_k)$. Set a threshold $0 < \epsilon<1$. We slightly modify the definition of $F_{n}$ and $G_{n}$ as follows: $$\begin{aligned}
F_{n} & := \left\{ \vec{w} : \begin{matrix}
\# \mathcal{N}_{\lfloor \epsilon n/7 \rfloor, n}(\vec{w}) \ge {\eta \epsilon n \over 24L}, \,\,d(x_{0}, w_{n} x_{0}) \ge \left(1-\frac{\epsilon}{1000}\right) \lambda n,\\
d(x_{0}, w_{\lfloor \epsilon n / 7\rfloor} x_{0}) \le \frac{\epsilon \lambda n}{6} \end{matrix} \right\}\\
G_{n} & := \left\{ \vec{w} \in F_{n}: \tau(w_{n}) \le \left(2D - \frac{2\eta}{M} \right) n \right\}.
\end{aligned}$$

We again invoke the subadditive ergodic theorem. Theorem \ref{thm:Woess} asserts that for a.e. path $\w$, there exists some $t(\w)$ such that for all $i > t(\w)$,\begin{enumerate}
\item $W_{i} \ge 0.999\eta i$,
\item $d(x_{0}, \w_{i} x_{0}) \ge ( 1- \epsilon/1000) \lambda i$,
\item $d(x_{0}, \w_{i} x_{0}) \le 1.001 \lambda i$. 
\end{enumerate}
Note also that $\#N_{i, j}(\w) \ge W_{\lfloor i/ 3L \rfloor}$ for any $i \le j$. Considering this, such path $\w$ will be contained in $F_{n}$ for $n \ge 1000 (L+1)t(\w)/\epsilon$. 

This time, we set \[\begin{aligned}
D &:= 0.5(1 - \epsilon/2), \\
M &> 1 + 10R + 2d(x_{0}, w_{+} x_{0}) + 2d(x_{0}, w_{-} x_{0}) + 4L + \frac{2000 \eta}{\lambda \epsilon} + {2000\eta \over \epsilon} + {2000 \over \epsilon}.
\end{aligned}
\]
Note that Inequality \ref{eqn:3delta} still holds. Considering this, Lemma \ref{lem:wellSevered2}, Lemma \ref{claim:step1} and Lemma \ref{claim:step2} also hold with the same proofs. The only missing step is Lemma \ref{lem:setsize}. We now claim that $F_{n, f}(D, Q=1) = F_{n}$. If $\vec{w} \in F_{n}$, then \begin{equation} \label{eqn:revision2}
	\begin{aligned}
 d(x_{0}, w_{\lfloor \epsilon n / 7 \rfloor}x_{0} ) & \le \frac{\epsilon \lambda n}{6} \\
 & \le \frac{1}{2}\left(1-\frac{\epsilon}{1000}\right) \lambda n - \left( \frac{1}{2} (1-\epsilon/2) \lambda + \eta \cdot \frac{\epsilon\lambda}{2000 \eta}\right)n \\
 & \le\frac{1}{2} d(x_{0}, w_{n}x_{0}) -  \left(D + \frac{\eta}{M} \right)n.
 \end{aligned}
\end{equation}
Let us denote $\mathcal{N}_{\lfloor \epsilon n/7 \rfloor, n} (\vec{w})= \{n_{1} < \ldots < n_{k}\}$. We have $k \ge \eta \epsilon n / 24 L$ (since $\vec{w} \in F_{n}$) and $\mathcal{N}_{\lfloor \epsilon n/7 \rfloor, n}(\vec{w}) \subseteq \mathcal{N}_{\lfloor \epsilon n/7 \rfloor, \lfloor \epsilon n/7 \rfloor}(\vec{w})$. Let\[
y_{2i-1} = x_{n_{i} - 2L}, y_{2i} = x_{n_{i} - L}
\]
for $i=1, \ldots, k$ and $y_{2k+1} = x_{\lfloor \epsilon n / 7 \rfloor}$. Then Corollary \ref{cor:progress} tells us that  \[d(x_{0}, x_{n_{i} - L}) \le d(x_{0}, w_{\lfloor \epsilon n / 7 \rfloor}x_{0}) \le \frac{1}{2} d(x_{0}, w_{n}x_{0}) -  \left(D + \frac{\eta}{M} \right)n
\] for each $n_{i} \in \mathcal{N}_{\lfloor \epsilon n/7 \rfloor, n}$. Hence, these joints contribute to $\mathcal{N}_f(\vec{w})$. We then have $\vec{w} \in F_{n, f}$ because \[
\frac{\eta n}{20 L M^{2}} \le \frac{\eta n}{20 L M} \le \frac{\eta n}{20 L} \cdot \frac{\epsilon}{2000}\le \frac{\eta \epsilon n}{24 L}.
\]

Given that the lemmata still holds true, the previous argument to deduce that almost every path $\w$ does not fall into $G_{n}$ infinitely often. Note also that \[
2D - \frac{2 \eta}{M} \ge( 1-\epsilon) \lambda.
\]
Together with the observation that a.e. path does not avoid $F_{n}$ infinitely often, we deduce that $\liminf \frac{1}{n}\tau(\w_{n}) \ge (1-\epsilon) \lambda$ almost surely. By setting $\epsilon = 1/k$ and take intersection of those events for $k \in \Z_{>1}$, we deduce that $\liminf \frac{1}{n} \tau(\w_{n}) \ge \lambda$ almost surely. Since $\frac{1}{n} \tau(\w_{n}) \le \frac{1}{n}d(x_{0}, \w_{n} x_{0})$ and the latter one converges to $\lambda$ a.s., we finally conclude that $\frac{1}{n} \tau(\w_{n}) \rightarrow \lambda$ a.s.
\end{proof}

\section{Teichm\"uller space and its geometry}

\subsection{Basic notions}

In this section, we investigate the geometry of the Teichm{\"u}ller space $(X, d)$ equipped with the Teichm{\"u}ller metric. Let $S$ be a closed orientable surface of genus at least 2. Its \emph{Teichm\"uller space} is the space of equivalence classes $[(f, \Sigma)]$ of an orientation preserving homeomorphism $f : S \to \Sigma$ to a hyperbolic surface $\Sigma$, where $(f, \Sigma)$ and $(g, \Sigma')$ are equivalent if there is an isometry $i : \Sigma \to \Sigma'$ so that $i \circ f$ is homotopic to $g$. The Teichm\"uller space admits a metric $d$ called the \emph{Teichm\"uller metric} defined by $d([(f, \Sigma)], [(g, \Sigma')]) = {1 \over 2} \inf_{\varphi} \log K_{\varphi}$, where the infimum is taken over quasiconformal homeomorphisms $\varphi$ with $f = \varphi \circ g$ up to homotopy and $K_{\varphi}$ is its quasiconformal constant. Then the mapping class group $\Mod(S)$ of the surface acts isometrically on its Teichm\"uller space $X = \mathcal{T}(S)$. We denote $G = \Mod(S)$ for the rest of the paper. For details on the Teichm\"uller geometry, see \cite{hubbard2006teich}, \cite{papadopoulos2007handbook}, \cite{imayoshi1992introduction}.

We recall the notion of the extremal length of a curve.

\begin{definition}[Extremal length]\label{dfn:extLength}
	For a point $x \in X$ in the Teichm\"uller space and an isotopy class $\alpha$ of simple closed curves on the underlying surface $S$, the \emph{extremal length} is defined as $$Ext_x(\alpha):= \sup_{\sigma \in [x]} {l_{\sigma}^2(\alpha) \over \mbox{area}(\sigma)}.$$ Here, $\sigma$ is a Riemannian metric in the conformal class $x$ and $l_{\sigma}(\alpha)$ is the length of $\alpha$ measured by $\sigma$.
\end{definition}

Kerckhoff \cite{kerckhoff1980asymptotic} characterized the Teichm\"uller metric in terms of the extremal length: $$d(x, y) = {1 \over 2} \log \sup_{\alpha \in \mathcal{C}_0(S)} {Ext_y (\alpha) \over Ext_x(\alpha)}$$ where $\mathcal{C}_0(S)$ is a space of isotopy classes of essential simple closed curves on $S$.

We denote the $\epsilon$-thick part of the Teichm\"uller space $X$ by $X_{\ge \epsilon}$. That is, $X_{\ge \epsilon}$ consists of surfaces whose shortest extremal length is at least $\epsilon$. From Kerckhoff's formula, $x \in X_{\ge \epsilon}$ implies $y \in X_{\ge \epsilon'}$ where $\epsilon' = \epsilon e^{-2d(x, y)}$. 

A \emph{geodesic segment} on $X$ refers to an isometric embedding $\Gamma : I=[a,b] \rightarrow X$ of a closed interval $[a, b]$ into $X$, i.e., $d_{X}(\Gamma(t_{1}), \Gamma(t_{2})) =|t_{1}-t_{2}|$ for all $t_{1}, t_{2} \in I$. We make an abuse of notation: the image $\Gamma([a, b])$ of $\Gamma$ is also called the geodesic segment connecting $\Gamma(a)$ and $\Gamma(b)$, and denoted by $[\Gamma(a), \Gamma(b)]$. Note that for each $x, y \in X$, $[x, y]$ uniquely exists by Teichm\"uller's theorem. We say that a segment $\Gamma' : J=[c, d]\rightarrow X$ is a \emph{subsegment} of $\Gamma : I=[a, b] \rightarrow X$ if $\Gamma|_{J} = \Gamma'$. In this situation, we also say that $[\Gamma'(c), \Gamma'(d)]$ is a subsegment of $[\Gamma(a), \Gamma(b)]$. For $\epsilon > 0$, we also call a segment $\epsilon$-thick if it is contained in the $\epsilon$-thick part.

For subsets $A, B \subseteq X$ and $R > 0$, we define \[
\begin{aligned}
N_{R}(A) &:= \{y \in X : \textrm{there exists}\,\, x \in A \,\,\textrm{such that}\,\, d(x, y) < R\},\\
d(A, B) &:= \inf\{ d(x, y) : x \in A,\,\, y \in B\},\mbox{ and}\\
d_{H}(A, B) & := \inf\{R > 0: A \subseteq N_{R}(B)\,\,\textrm{and}\,\, B \subseteq N_{R}(A) \}.
\end{aligned}
\]\

\subsection{Curve complex} \label{subsec:teichandcurvecomplex}

We temporarily digress to the relationship between the Teichm{\"u}ller geometry and the curve complex. The payback for this is Theorem \ref{thm:nonAtom1}, which guarantees the existence of persistent joints.

The \emph{curve complex} $Y = Y(S)$ of a surface $S$ is a simplicial complex whose vertices are isotopy classes of essential simple closed curves and each $(k+1)$-simplex corresponds to $k+1$ vertices represented by disjoint simple closed curves. The curve complex was first introduced by Harvey \cite{harvey1981boundary}, and it endows with a natural metric that each edge has a length $1$.

There exists a projection $\pi : X \rightarrow Y$ from Teichm{\"u}ller space to the curve complex that associates each surface $x \in X$ with the curve $\pi(x)$ with the shortest extremal length. Masur and Minsky \cite{masur1999curve} showed that $\pi$ is coarsely $\Mod(S)$-equivariant and coarsely Lipschitz. That is, there exists a constant $A > 0$ such that $$d_{Y(S)}(g \pi(x), \pi(gx)) \le A, \quad d_{Y(S)}(\pi(x), \pi(y)) \le Ad_{X}(x, y) + A$$ for each $x, y \in X$ and $g \in \Mod(S)$. Furthermore, there exists a constant $K > 0$ such that every geodesic $\gamma$ in the Teichm\"uller space $X$ descends to a $K$-quasi-geodesic $\pi \circ \gamma$ in the curve complex $Y(S)$ up to reparametrization. Finally, since $Y(S)$ is Gromov hyperbolic, the following assertions hold for some $A' > 0$. \begin{enumerate}
\item If $\gamma$, $\eta$ are $K$-quasi-geodesics with the same endpoints, then $d_{H}(\gamma, \eta) < A'$. 
\item Given $x, y, z\in Y(S)$, $d(y, [x, z]) \ge d(x, y) - (y, z)_{x} - A'$.
\end{enumerate}

Thurston \cite{thurston1988classification} compactified the Teichm\"uller space by introducing $\mathcal{PMF}$, the projective space of measured foliations, as the boundary of the Teichm\"uller space. Pseudo-Anosov maps exhibit source-sink dynamics on this compactified Teichm{\"u}ller space as follows. Let $\mathcal{UE} \subseteq \mathcal{PMF}$ the set of uniquely ergodic measured foliations. Then for each pseudo-Anosov $\phi$ there exists distinct foliations $\phi^{+\infty}, \phi^{-\infty} \in \mathcal{UE}$ such that $\lim_{n \rightarrow +\infty} \phi^{ \pm n} x = \phi^{\pm \infty}$ for any $x \in X$.

Moreover, Kaimanovich and Masur \cite{kaimanovich1996poisson} proved the boundary convergence of random walks on the Teichm\"uller space. More precisely, for non-elementary random walks on mapping class groups, almost every orbit on the Teichm\"uller space converges to $\mathcal{UE} \subset \mathcal{PMF}$, the space of uniquely ergodic measured foliations.

Klarreich \cite{klarreich2022boundary} and Hamenst\"adt \cite{hamenstadt2006train} also proved that the boundary of the curve complex can be identified with $\widetilde{\mathcal{MIN}}$, the space of equivalence classes of minimal topological foliations. As such, we can define a map $\pi_{\infty} : \UE\to \widetilde{\mathcal{MIN}}$ which forgets the measure structure and quotients by topological equivalence. This map $\pi_{\infty}$ is injective, and further, if a sequence $x_{n} \in X$ converges to a point $x_{\infty} \in \UE$, then the corresponding $\pi(x_{n})$ converges to a point $\pi_{\infty}(x_{\infty})$.

We now observe that two sequences on Teichm{\"u}ller space diverge from each other if they are heading to distinct uniquely ergodic foliations.

\begin{prop}\label{thm:deviatesEventually}
Let $\phi$ be a pseudo-Anosov mapping class and $\{z_{k}\}_{k \ge 0}$ be a sequence in $X$ that converges to a uniquely ergodic foliation $\xi$ other than $\phi^{+\infty}$. Then for each $C>0$, there exists $F=F(C, \phi, \{z_{k}\}_{k}) >0$ such that $d(\phi^{m} x_{0}, [x_{0}, z_{k}]) > C$ for all $k \ge 0$ and $m \ge F$.
\end{prop}

\begin{proof}
Since $\{z_{k}\}_{k}$ and $\{\phi^{n}x_{0}\}_{n}$ are both heading to points in $\UE$, we can descend them to the curve complex; $\pi(z_{k}) \rightarrow \pi_{\infty}(\xi)$ and $\pi(\phi^{n}x_{0}) \rightarrow \pi_{\infty}(\phi^{+\infty})$. Since $\xi$ and $\phi^{+\infty}$ are distinct, we have
\[
P:= \sup_{k, n} \{(\pi(\phi^{n} x_{0}), \pi(z_{k}))_{\pi(x_{0})}\}< \infty.
\]

Take $F$ such that \[
d(\pi(x_{0}), \pi(\phi^{m}x_{0})) > P+2A' + A(C+1).
\]
for any $m \ge F$. Suppose to the contrary that $d(\phi^{m} x_{0}, [x_{0}, z_{k}]) \le C$ for some $m \ge F$ and $k \ge 0$. Let $x^{\dagger} \in [x_{0}, z_{k}]$ be the closest point to $\phi^{m}x_{0}$.

Note that $[\pi(x_{0}), \pi(z_{k})]$ and $\pi([x_{0}, z_{k}])$ are (unparametrized) $K$-quasi-geodesics on $Y(S)$ with the same endpoints. Hence, we have \[
d_{H}([\pi(x_{0}), \pi(z_{k})], \pi([x_{0}, z_{k}])) < A'.
\] Moreover, since the projection $\pi$ is coarsely Lipschitz, we have \[
d(\pi(x^{\dagger}), \pi(\phi^{m} x_{0})) \le Ad(x^{\dagger}, \phi^{m}x_{0}) + A \le A(C+ 1).
\] Combining these yields \[\begin{aligned}
d([\pi(x_{0}), \pi(z_{k})], \pi(\phi^{m}x_{0})) &\le d_{H}([\pi(x_{0}), \pi(z_{k})], \pi([x_{0}, z_{k}])) + d(\pi(x^{\dagger}), \pi(\phi^{m}x_{0})) \\
&\le A' + A (C+ 1).
\end{aligned}
\]
Meanwhile, we also have \[\begin{aligned}
d([\pi(x_{0}), \pi(z_{k})], \pi(\phi^{m}x_{0})) &\ge d(\pi(x_{0}), \pi(\phi^{m}x_{0})) - (\pi(\phi^{m}x_{0}), \pi(z_{k}))_{\pi(x_{0})} - A' \\
&> (P + 2A' + A (C + 1))- P - A'\\
&= A' + A(C+ 1),
\end{aligned}
\]
a contradiction.
\end{proof}

Proposition \ref{thm:deviatesEventually} leads to the following probabilistic observation. Recall that $\w$ is a non-elementary random walk on the mapping class group.

\begin{thm}\label{thm:nonAtom1}
Let $\phi$ be a pseudo-Anosov mapping class. Then for each $C>0$, there exists a constant $F = F(C, \phi)> 0$ such that \[
\Prob\left( (\w_{n})_{n \in \Z} :  \begin{array}{c} \textrm{$\exists m, n \in \Z$ such that $|m| \ge F$ and} \\ \textrm{$d(\phi^{m} x_{0}, [x_{0}, \w_{n} x_{0}]) <C $}\end{array}\right) < 1.
\]
\end{thm}

\begin{proof}
Suppose not. Then for each $N$, for almost every sample path $\w =(\w_{i})_{i}$ 
there exists $m, n \in \Z$ such that $|m| \ge N$ and $d(\phi^{m} x_{0}, [x_{0}, \w_{n} x_{0}]) <C$. Taking intersection of these events, we deduce that 
\begin{equation} 
\Prob\left( \mathcal{E} := \left\{(\w_{n})_{n \in \Z} : \begin{array}{c} \textrm{$\exists (n_{i}), (m_{i})$ such that $\lim_{i \rightarrow \infty} |m_{i}| = +\infty$ and} \\ \textrm{ $d(\phi^{m_{i}} x_{0}, [x_{0}, \w_{n_{i}}x_{0}]) < C$}\end{array}\right\}\right) =1.
\end{equation}
Meanwhile, recall the result of Kaimanovich and Masur that a.e. path $\w$ have boundary points $\xi^{+}(\w), \xi^{-}(\w) \in \UE$ such that $\lim_{n \rightarrow \pm \infty} \w_{n} x_{0} = \xi^{\pm}(\w)$. Since the forward and the backward stationary measure does not have atoms, we have \[
\Prob\left(\mathcal{E}' := \left\{\w : \{\xi^{+}(\w), \xi^{-}(\w)\} \cap \{\phi^{+\infty}(\w), \phi^{-\infty} (\w)\} = \emptyset\right\}\right) = 1.
\] Finally, Proposition \ref{thm:deviatesEventually} tells us that $\mathcal{E} \cap \mathcal{E}' = \emptyset$. This implies that $\Prob(\mathcal{E} \cup \mathcal{E}') = \Prob(\mathcal{E}) + \Prob(\mathcal{E}') = 2 > 1$, a contradiction.
\end{proof}

\subsection{Fellow traveling}

Let $\gamma : [0, L] \rightarrow X$ and $\gamma' : [0, L'] \rightarrow X$ be paths on $X$. We say that $\gamma$ and $\gamma'$ \emph{$\epsilon$-fellow travel} if $d(\gamma(kL), \gamma'(kL')) < \epsilon$ for each $0 \le k \le 1$. We remark that we always stick to the arclength parametrization when discussing geodesics on $X$. The following is a direct observation.

\begin{lem}\label{lem:fellowCopy}
Let $\gamma^{(i)} : [0, L_{i}] \rightarrow X$ be arcs on $X$ for $i = 1, 2, 3$. Suppose that $\gamma^{(1)}$ and $\gamma^{(2)}$ $\epsilon$-fellow travel, and $\gamma^{(2)}$ and $\gamma^{(3)}$ $\epsilon'$-fellow travel. Then:

\begin{enumerate}
\item  $\gamma^{(1)}|_{[kL_{1}, k'L_{1}]}$ and $\gamma^{(2)}|_{[kL_{2}, k'L_{2}]}$ $\epsilon$-fellow travel for each $0 \le k \le k'\le 1$. In particular, any initial (terminal, resp.) subarc of $\gamma_{1}$ $\epsilon$-fellow travel with an initial (terminal, resp.) subarc of $\gamma_{2}$.
\item $\gamma_{1}$ and $\gamma_{3}$ $(\epsilon+\epsilon')$-fellow travel.
\end{enumerate}
\end{lem}

In Gromov hyperbolic spaces, two geodesics fellow travel if their endpoints are pairwise near. In contrast, such geodesics in Teichm{\"u}ller spaces need not fellow travel since Teichm{\"u}ller spaces are not Gromov hyperbolic in general. Indeed, Rafi \cite{rafi2014hyperbolicity} presented examples of pairs of geodesics whose endpoints are pairwise near while they are not fellow traveling with a uniform constant. Nonetheless, Rafi also proved the fellow traveling phenomenon of geodesics with near endpoints, given that the endpoints are lying on an $\epsilon$-thick part. 

\begin{thm}[{\cite[Theorem 7.1]{rafi2014hyperbolicity}}]\label{thm:Rafi1Ori}
There exists a constant $\mathscr{B}_{0}(\epsilon)$ satisfying the following.  For $x, x', y, y'\in X_{\ge \epsilon}$ such that \[
d(x, x') \le 1 \quad \textrm{and}\quad d(y, y') \le 1,
\]
$[x, y]$ and $[x', y']$ $\mathscr{B}_{0}(\epsilon)$-fellow travel.
\end{thm}

From this, one can also see the following.

\begin{cor}\label{cor:Rafi1}
For each $C >0$, there exists a constant $\mathscr{B}(\epsilon, C) > C$ satisfying the following. For all $x, y \in X_{\ge \epsilon}$ and all $x', y' \in X$ such that \[
d(x, x') \le C \quad \textrm{and}\quad d(y, y') \le C,
\]
$[x, y]$ and $[x', y']$ $\mathscr{B}(\epsilon, C)$-fellow travel.
\end{cor}

\begin{proof}
Let $\{x_{t}\}$ ($\{y_{t}\}$, resp.) be a segment connecting $x$ and $x'$ ($y$ and $y'$, resp.) with speed less than 1 and $x_{0} = x$, $x_{C} = x'$, $y_{0}= y$ and $y_{C} = y'$. Then each of $x_{t}$, $y_{t}$ is $\epsilon e^{-2t}$-thick. Thus, $[x_{t}, y_{t}]$ and $[x_{t+1}, y_{t+1}]$ $\mathscr{B}_{0}(\epsilon e^{-2(t+1)})$-fellow travel. This implies that $[x, y]$ and $[x', y']$ $(\sum_{t=1}^{\lceil C \rceil} \mathscr{B}_{0}(\epsilon e^{-2t}))$-fellow travel.
\end{proof}

Rafi also proved that geodesic triangles in a thick part of the Teichm\"uller space are thin.
 
\begin{thm}[{\cite[Theorem 8.1]{rafi2014hyperbolicity}}]\label{thm:Rafi2}
There exist constants $\mathscr{C}_{0}(\epsilon)$ and $\mathscr{D}_{0}(\epsilon)$ such that the following holds. Let $x, y, z \in X_{\ge \epsilon}$ and suppose that the geodesic $[x, y]$ contains a segment $\gamma \subseteq X_{\ge \epsilon}$ of length at least $\mathscr{C}_{0}(\epsilon)$. Then there exists a point $w\in\gamma$ such that \[
\min \left\{ d(w, [x, z]), d(w, [z,y]) \right\} < \mathscr{D}_{0}(\epsilon).
\]
\end{thm}

From now on, we fix a point $x_0 \in X$ as the basepoint. Since the random walk $\w$ is generated by a non-elementary probability measure $\mu$, there exist two independent pseudo-Anosovs $\phi_{+}$, $\phi_{-}$ in $\llangle \supp \mu \rrangle$, the subsemigroup generated by the support of $\mu$. By taking suitable powers, we may assume that they are made of equal numbers of elements in $\supp \mu $.

Let $\Gamma(\phi_{\pm})$ be the invariant axis of $\phi_{\pm}$ on $X$, respectively. We fix points $y_{+} \in \Gamma(\phi_{+})$ and $y_{-} \in \Gamma(\phi_{-})$. We also let $\tau_{\pm}$ be the translation length of $\phi_{\pm}$ on $X$, respectively.

\begin{lem}\label{lem:thick}
There exists $\mathscr{M}, \epsilon > 0$ such that the following holds. Let $\phi \in \{\phi_{+}, \phi_{-}\}$ and $n \le m$. Then \begin{enumerate}
\item $[\phi^{n} x_{0}, \phi^{m} x_{0}]$ is $\epsilon$-thick, and 
\item $\{\phi^{i} x_{0} : n \le i \le m\}$ is contained in the $\mathscr{M}$-neighborhood of $[\phi^{n} x_{0}, \phi^{m} x_{0}]$.
\end{enumerate}
\end{lem}

\begin{proof}
Let us discuss the case for $\phi = \phi_{+}$. Let $\epsilon_{0} > 0$ be such that $x_{0}, y_{+}$ are $\epsilon_{0}$-thick. 

First, $\{\phi_{+}^{i} x_{0} : n \le i \le m \}$ and $\{\phi_{+}^{i} y_{+} : n \le i \le m\}$ are within Hausdorff distance $d(x_{0}, y_{+})$. Moreover, $\{\phi_{+}^{i} y_{+} : n \le i \le m\}$ periodically appear on the Teichm{\"u}ller geodesic $[\phi_{+}^{n} y_{+}, \phi_{+}^{m} y_{+}]$ with period $\tau_{+}$. Hence, they are within Hausdorff distance $\tau_{+}$, and $[\phi_{+}^{n} y_{+}, \phi_{+}^{m} y_{+}]$ is $\epsilon_{0} e^{-2\tau_{+}}$-thick.

Finally, note that  $\phi_{+}^{n} y_{+}, \phi_{+}^{m} y_{+}$ are $\epsilon_{0}$-thick and $d(\phi_{+}^{n} y_{+}, \phi_{+}^{n} x_{0}), d(\phi_{+}^{m} y_{+}, \phi_{+}^{m} x_{0}) < d(x_{0}, y_{+})$. Corollary \ref{cor:Rafi1} then tells us that the Hausdorff distance between $[\phi_{+}^{n} y_{+}, \phi_{+}^{m} y_{+}]$ and $[\phi_{+}^{n} x_{0}, \phi_{+}^{m} x_{0}]$ is bounded by $\mathscr{B}(\epsilon_{0}, d(x_{0}, y_{+}))$.
\end{proof}

\begin{remark}
We have actually proved something stronger, namely, that the Hausdorff distance between $[\phi^{n} x_{0}, \phi^{m} x_{0}]$ and $\{\phi^{i} x_{0} : n \le i \le m\}$ is bounded by $\mathscr{M}$. We stick to the weaker conclusion as in Lemma \ref{lem:thick} since it still holds true after replacing $\phi_{\pm}$ with their powers.
\end{remark}

\begin{lem}\label{lem:distantAxes}
For each $C> 0$, there exists a constant $\mathscr{G}(C)>0$ such that $d(\phi_{+}^{k} y_{+}, \phi_{-}^{m} y_{-}) \le C$ implies $\max (|k|, |m|) \le \mathscr{G}(C)$. 
\end{lem}

\begin{proof}
Suppose not. Without loss of generality, for each $i \in \Z_{> 0}$, let $k_{i}, m_{i} \in \Z$ be such that $|k_{i}|> i$ and $d(\phi_{+}^{k_{i}} y_{+}, \phi_{-}^{m_{i}} y_{-})\le C$. Note that \[\begin{aligned}
d(x_{0}, \phi_{+}^{k_{i}} y_{+}) &\ge d(y_{+}, \phi_{+}^{k_{i}} y_{+}) - d(y_{+}, x_{0}) \ge |k_i| \tau_{+}- d(y_{+}, x_{0})
\end{aligned}
\]
tends to infinity. Since $d(\phi_{+}^{k_{i}}y_{+}, \phi_{-}^{m_{i}}y_{-}) < C$, $d(x_{0}, \phi_{-}^{m_{i}} y_{-})$ also tends to infinity. Possibly after passing to subsequences, this implies that $\phi_{+}^{k_{i}} y_{+}$ approaches an endpoint of $\Gamma(\phi_{+})$ and $\phi_{-}^{m_{i}} y_{-}$ approaches an endpoint of $\Gamma(\phi_{-})$. Since $d(\phi_{+}^{k_{i}}y_+, \phi_{-}^{m_{i}}y_- )$ is bounded, this implies that those endpoints are identical (\cite[Lemma 1.4.2]{kaimanovich1996poisson}); this contradicts the independence of $\phi_{+}$ and $\phi_{-}$.
\end{proof}

\begin{lem}\label{lem:distantOrbits}
There exists $C_{Grom}>0$ such that the following holds. For any pair of distinct elements $\phi, \psi$ of $\{\phi_{+}, \phi_{+}^{-1}, \phi_{-}, \phi_{-}^{-1}\}$ and $m, n \ge 0$, we have $(\phi^{n} x_{0}, \psi^{m} x_{0})_{x_{0}} < C_{Grom}$.
\end{lem}

\begin{proof}
Let $\mathscr{M}, \epsilon > 0$ be as in Lemma \ref{lem:thick}, and let $\mathscr{C}_{0}(\epsilon), \mathscr{D}_{0}(\epsilon)$ be as in Theorem \ref{thm:Rafi2}. Finally, let $\mathscr{G} = \mathscr{G}(\mathscr{D}_{0}(\epsilon) + 2\mathscr{M})$ be as in Lemma \ref{lem:distantAxes}.

When $\phi = \psi^{-1}$, the conclusion follows from the fact that $\phi_{+}$, $\phi_{-}$ have positive translation lengths. In particular, we have \[
|d(\phi_{\pm}^{m} x_{0}, \phi_{\pm}^{n} x_{0}) - \tau_{\pm} |m-n| | \le 2 d(y_{\pm}, x_{0})
\]
for any integers $m, n \in \Z$. This leads to \[
|d(\phi_{\pm}^{m} x_{0}, \phi_{\pm}^{-n} x_{0}) - d(\phi_{\pm}^{m} x_{0}, x_{0}) - d(\phi_{\pm}^{-n} x_{0}, x_{0})| \le 6 d(y_{\pm}, x_{0})
\]
for $m, n \ge 0$.

Let us now fix $m, n \in \Z$ and estimate $(\phi_{+}^{m} x_{0}, \phi_{-}^{n} x_{0})_{x_{0}}$. If $d(x_{0}, \phi_{+}^{m} x_{0})$ is smaller than $\mathscr{G} \tau_{+} + 2d(x_{0}, y_{+}) + \mathscr{M} + 1 + \mathscr{C}_{0}(\epsilon)$, so is the Gromov product. If not, we take the subsegment $\eta$ of $[x_{0}, \phi_{+}^{m} x_{0}]$ with length $\mathscr{C}_{0}(\epsilon)$ such that $d(x_{0}, \eta) =  \mathscr{G} \tau_{+} + 2d(x_{0}, y_{+}) + \mathscr{M} + 1$. By Theorem \ref{thm:Rafi2}, either $[x_{0}, \phi_{-}^{n} x_{0}]$ or $[\phi_{+}^{m} x_{0}, \phi_{-}^{n} x_{0}]$ intersects the $\mathscr{D}_{0}(\epsilon)$-neighborhood of $\eta$.

 In the former case, let $p \in [x_{0}, \phi_{-}^{n} x_{0}]$ and $q \in \eta$ be such that $d(p, q) \le \mathscr{D}_{0}(\epsilon)$. By Lemma \ref{lem:thick}, there exists $m', n'$ such that $\phi_{+}^{m'} x_{0} \in N_{\mathscr{M}}(p)$ and $\phi_{-}^{n'} x_{0} \in N_{\mathscr{M}}(q)$. Then we have $d(\phi_{+}^{m'} x_{0}, \phi_{-}^{n'} x_{0}) \le \mathscr{D}_0(\epsilon) + 2 \mathscr{M}$ for some $m', n' \in \Z$ such that $d(x_{0}, \phi_{+}^{m'} x_{0}) \ge d(x_{0}, \eta) - \mathscr{M} > \mathscr{G} \tau_{+} + 2d(x_{0}, y_{+})$. This contradicts Lemma \ref{lem:distantAxes}. 

Hence, the latter case holds: there exists $p \in [\phi_{+}^{m} x_{0}, \phi_{-}^{n} x_{0}]$ that belongs to the $\mathscr{D}_{0}(\epsilon)$-neighborhood of $\eta$. This implies that \[\begin{aligned}
2(\phi_{+}^{m} x_{0}, \phi_{-}^{n} x_{0})_{x_{0}} &= [d(x_{0}, \phi_{+}^{m} x_{0}) - d(\phi_{+}^{m} x_{0}, p)] + [d(x_{0}, \phi_{-}^{n} x_{0}) - d(p, \phi_{-}^{n} x_{0})]\\
& \le 2d(x_{0}, p) \le 2d(x_{0}, \eta) + 2\operatorname{diam}(\eta) + 2\mathscr{D}_{0}(\epsilon).
\end{aligned}
\]
In conclusion, we have  $(\phi_{+}^{m} x_{0}, \phi_{-}^{n} x_{0})_{x_{0}} \le \mathscr{G} \tau_{+} + 2d(x_{0}, y_{+}) + \mathscr{C}_{0}(\epsilon) + \mathscr{D}_{0}(\epsilon) + \mathscr{M} + 1$.
\end{proof}

\subsection{Witnessing}

Recall that $G$ stands for the mapping class group of the underlying surface.

\begin{definition}
Let $D>0$, $\gamma$, $\gamma'$ be paths on $X$. We say that $\gamma$ is \emph{$D$-witnessed by $\gamma'$} if there exists a subsegment $\eta$ of $\gamma$ that $D$-fellow travels with $\gamma'$. Here, if $\gamma$ and $\gamma'$ share the beginning point (ending point, resp.), we additionally require $\eta$ to begin at (end at, resp.) that shared point.

For $w, w' \in G$, we say that: \begin{itemize}
\item $w$ is $D$-witnessed by $\gamma'$ if $[x_{0}, w x_{0}]$ is $D$-witnessed by $\gamma'$;
\item $\gamma$ is $D$-witnessed by $w'$ if $\gamma$ is $D$-witnessed by $[x_{0}, w'x_{0}]$, and 
\item $w$ is $D$-witnessed by $w'$ if $[x_{0}, w x_{0}]$ is $D$-witnessed by $[x_{0}, w'x_{0}]$.
\end{itemize}

We also define \[
\begin{aligned}
\mathcal{C}_{D}(\phi \rightarrow 0) &:= \{w \in G: w\,\,\textrm{is $D$-witnessed by}\,\,\phi \},\\
\mathcal{C}_{D}(0 \rightarrow \phi) &:= \{w \in G : w^{-1}\,\textrm{is $D$-witnessed by}\,\,\phi^{-1} \},\\
\mathcal{C}_{D}(\phi \rightarrow \psi) &:= \mathcal{C}_{D}(\phi \rightarrow 0) \cap \mathcal{C}_{D}(0 \rightarrow \psi).
\end{aligned}
\]
\end{definition}

The following lemma will tell us that sample paths are witnessed by $\phi_{\pm}$ for a definite probability.

\begin{lem}\label{lem:singleSegment}
For each $F>0$ there exists $\mathscr{F} = \mathscr{F}(F)$ such that the following holds.

Let $\phi \in \{\phi_{+}, \phi_{+}^{-1}, \phi_{-}, \phi_{-}^{-1}\}$ and $w \in G$. If $d(\phi^{F} x_{0}, [x_{0}, wx_{0}]) > \mathscr{C}_{0}(\epsilon) + \mathscr{D}_{0}(\epsilon) + \mathscr{M}$, then $\phi^{-m} w$ is $\mathscr{F}$-witnessed by $\phi^{-m}$ for every $m \ge 0$.
\end{lem}

\begin{proof}
Let $K_{1}>0$ be such that $d(x_{0}, \phi^{i} x_{0}) > \mathscr{M} + \mathscr{C}_{0}(\epsilon)$ for each $\phi \in \{\phi_{+}, \phi_{+}^{-1}, \phi_{-}, \phi_{-}^{-1}\}$ and $i \ge K_{1}$. Then, let $K_{2}>0$ be such that $d(x_{0}, \phi^{i} x_{0}) \le K_{2}$ for each $\phi \in \{\phi_{+}, \phi_{+}^{-1}, \phi_{-}, \phi_{-}^{-1}\}$ and $0 \le i \le F+K_{1}$. We then take \[
\mathscr{F} = K_{1} + K_{2} + \mathscr{B}(\epsilon, \mathscr{C}_{0}(\epsilon) +\mathscr{D}_{0}(\epsilon)+ \mathscr{M} + K_{2}) +F.
\]
\

Note that for any $x, y, z \in X$, $[x, y]$ is $d(x, z)$-witnessed by $[x, z]$. Since $\mathscr{F} \ge K_{2}$,  $\phi^{-m} w$ is $\mathscr{F}$-witnessed by $\phi^{-m}$ for $0 \le m \le F+K_{1}$.

Let us now consider the case of $m > F+K_{1}$. Let $y$ be a point on $[\phi^{-m} x_{0}, x_{0}]$ that lies in the $\mathscr{M}$-neighborhood of $\phi^{F-m} x_{0}$. Note that $d(y, x_{0}) \ge d(\phi^{m-F} x_{0}, x_{0}) - \mathscr{M} \ge \mathscr{C}_{0}(\epsilon)$. Hence, we can take a subsegment $\eta = [y, y']$ of $[\phi^{-m} x_{0}, x_{0}]$ such that $d(y, y') = \mathscr{C}_{0}(\epsilon)$.

By Theorem \ref{thm:Rafi2}, either $[\phi^{-m}w x_{0}, x_{0}]$ or $[\phi^{-m} w x_{0}, \phi^{-m} x_{0}]$ intersects the $\mathscr{D}_{0}(\epsilon)$-neighborhood of $\eta$. If $d([\phi^{-m} w x_{0}, \phi^{-m} x_{0}], \eta) < \mathscr{D}_{0}(\epsilon)$, then we deduce $d(\phi^{F-m} x_{0}, [\phi^{-m} w x_{0}, \phi^{-m} x_{0}]) < \mathscr{D}_{0}(\epsilon)+ \mathscr{M} + \mathscr{C}_{0}(\epsilon)$. This is equivalent to $d(\phi^{F} x_{0}, [x_{0}, wx_{0}]) < \mathscr{C}_{0}(\epsilon) + \mathscr{D}_{0}(\epsilon) + \mathscr{M}$, which contradicts the assumption.

Hence, $[\phi^{-m} w x_{0}, x_{0}]$ contains a point $p$ that lies in the $\mathscr{D}_{0}(\epsilon)$-neighborhood of $\eta$. Note that \[
\begin{aligned}
d(p, \phi^{-m} x_{0}) & \le d(p, \eta) + \operatorname{diam}(\eta) + \mathscr{M} + d(\phi^{F-m} x_{0}, \phi^{-m} x_{0}) \\
& \le \mathscr{C}_{0}(\epsilon) + \mathscr{D}_{0}(\epsilon) + \mathscr{M} + K_{2}.
\end{aligned}
\]

Then $[p, x_{0}]$ and $[\phi^{-1}x_{0}, x_{0}]$ $\mathscr{B}(\epsilon, \mathscr{C}_{0}(\epsilon) + \mathscr{D}_{0}(\epsilon) + \mathscr{M} + K_{2})$-fellow travel as desired.
\end{proof}

\begin{cor}\label{cor:eventualWitness}
There exists $\mathscr{F} > 0$ such that for $\phi \in \{\phi_{+}, \phi_{-}\}$, we have\[
\Prob\left( (\w_{n})_{n \in \Z} : \textrm{$\phi^{m}\w_{n}$ is $\mathscr{F}$-witnessed by $\phi^{m}$ for all $m, n \in \Z$} \right) > 0.
\]
\end{cor}

\begin{proof}
Let $F = F(\mathscr{C}_{0}(\epsilon) + \mathscr{D}_{0}(\epsilon) + \mathscr{M}, \phi)$ be as in Theorem \ref{thm:nonAtom1} and $\mathscr{F} = \mathscr{F}(F)$ be as in Lemma \ref{lem:singleSegment}. We consider the event $\mathcal{E}$ of sample paths $\w = (\w_{n})_{n}$ such that \[
d(\phi^{F} x_{0}, [x_{0}, \w_{n} x_{0}]), d(\phi^{-F} x_{0}, [x_{0},  \w_{n} x_{0}]) > \mathscr{C}_{0}(\epsilon) + \mathscr{D}_{0}(\epsilon) + \mathscr{M}
\]for all $n \in \Z$. Then $\Prob(\mathcal{E}) >0$ thanks to Theorem \ref{thm:nonAtom1}. Moreover, Lemma \ref{lem:singleSegment} immplies that for $\w \in \mathcal{E}$, $\phi^{m} \w_{n}$ is $\mathscr{F}$-witnessed by $\phi^{m}$ for each $m \in \Z$ as desired.
\end{proof}

We now fix some constants. As the base case, we set \[
\begin{aligned}
\mathscr{D}_{1} &:= \mathscr{D}_{0}(\epsilon) + \mathscr{B}(\epsilon, \mathscr{C}_{0}(\epsilon) + \mathscr{D}_{0}(\epsilon) + \mathscr{M}) + \mathscr{F}, \\
\epsilon_{1} &:= \epsilon e^{-2\mathscr{D}_{1}}.
\end{aligned}
\] Now given $\mathscr{D}_{j}$ and $\epsilon_{j}$, we define 
\begin{equation}\label{eqn:DEDfn}\begin{aligned}
\mathscr{D}_{j+1} & := \mathscr{D}_{j} + \mathscr{B}(\epsilon, 4\mathscr{D}_{j} + 2C_{Grom}  + 5\mathscr{C}_{0}(\epsilon_{j}) + 5\mathscr{D}_{0}(\epsilon_{j}) + 2), \\
\epsilon_{j+1} &:= \epsilon e^{-2\mathscr{D}_{j+1}}.
\end{aligned}
\end{equation}
Note that the constants $\epsilon, \mathscr{M}, \mathscr{F}, C_{Grom}$ remain the same even if we replace $\phi_{+}$, $\phi_{-}$ with their powers. Hence, by employing sufficiently large powers of $\phi_{\pm}$ if necessary, we may assume
\begin{equation}\label{eqn:largeJump}
d(x_{0}, \phi_{\pm} x_{0}) \ge 6\mathscr{D}_{j} + 2C_{Grom}  + 2\mathscr{C}_{0}(\epsilon_{j}) + 2\mathscr{D}_{0}(\epsilon_{j}) + 2
\end{equation}
for $j = 1, \ldots, 6$. Finally, we set \begin{equation}\label{eqn:ZDfn}
Z = \mathscr{C}_{0}(\epsilon_{3}) +\mathscr{D}_{0}(\epsilon_{3}) +2[\mathscr{D}_{6} + d(x_{0}, \phi_{+} x_{0}) +d(x_{0}, \phi_{-}x_{0})] + 1.
\end{equation}

\begin{lem}\label{lem:selfWitness}
For any $\phi \in \{\phi_{+}, \phi_{+}^{-1}, \phi_{-}, \phi_{-}^{-1}\}$ and $n>0$, $\phi^{n}$ belongs to $\mathcal{C}_{\mathscr{D}_{1}}(\phi \rightarrow \phi)$. 
\end{lem}

\begin{proof}
By Lemma \ref{lem:thick}, we have $p \in [x_{0}, \phi^{n} x_{0}]$ such that \[
d(p, \phi x_{0}) < \mathscr{M}< \mathscr{C}_{0}(\epsilon) + \mathscr{D}_{0}(\epsilon) + \mathscr{M}.
\] Since $x_{0}$ and $\phi x_{0}$ are $\epsilon$-thick,  Corollary \ref{cor:Rafi1} tells us that $[x_{0}, p]$ and $[x_{0}, \phi x_{0}]$ $\mathscr{B}(\epsilon, \mathscr{C}_{0}(\epsilon) + \mathscr{D}_{0}(\epsilon) + \mathscr{M})$-fellow travel. By a similar reason, there exists $q \in [x_{0}, \phi^{n} x_{0}]$ such that $[q, \phi^{n} x_{0}]$ and $[\phi^{n-1} x_{0}, \phi^{n} x_{0}]$ $\mathscr{B}(\epsilon, \mathscr{C}_{0}(\epsilon) + \mathscr{D}_{0}(\epsilon) + \mathscr{M})$-fellow travel.
\end{proof}

\begin{remark}
We will observe that if $\phi, \psi \in \{\phi_{+}, \phi_{+}^{-1}, \phi_{-}, \phi_{-}^{-1}\}$ are distinct, then $\mathcal{C}_{\mathscr{D}_{5}}(\phi \rightarrow 0) \cap \mathcal{C}_{\mathscr{D}_{5}}(\psi \rightarrow 0) = \emptyset$.
\end{remark}

We can now discuss the concatenation of witnessed mapping classes.

\begin{definition}
Let $\phi_{i}, \psi_i \in \{\phi_{+}, \phi_{+}^{-1}, \phi_{-}, \phi_{-}^{-1}\}$. We say that the sequences $(\phi_{i})_{i}$ and $(\psi_{i})_{i}$ are \emph{repulsive} if $\phi_{i+1} \neq \psi_{i}^{-1}$ for each $i$.

Given repulsive sequences $(\phi_{i})_{i}$ and $(\psi_{i})_{i}$, we say that a sequence $(g_{i})_{i=1}^{n} \subseteq G$ is \emph{$D$-marked with} $(\phi_{i})_{i=2}^{n}$ and $(\psi_{i})_{i=1}^{n-1}$ if \begin{enumerate}
\item $g_{1} \in \mathcal{C}_{D}(0 \rightarrow \psi_{1})$,
\item $g_{i} \in \mathcal{C}_{D}(\phi_{i} \rightarrow \psi_{i})$ for $i=2, \ldots, n-1$,
\item $g_{n} \in \mathcal{C}_{D}(\phi_{n} \rightarrow 0)$.
\end{enumerate}

If $(g_{i})$ additionally satisfies that $g_{1} \in \mathcal{C}_{D}(\phi_{1} \rightarrow \psi_{1})$, then we say that $(g_{i})_{i=1}^{n}$ is \emph{$D$-strongly marked with} $(\phi_{i})_{i=1}^{n}$ and $(\psi_{i})_{i=1}^{n-1}$.
\end{definition}

\begin{lem}\label{lem:singleDirection}
For each $j = 1, \ldots, 5$, we have the following.

Let $(\phi_{i})_{i}$, $(\psi_{i})_{i}$ be repulsive sequences and $(g_{i})_{i=1}^{n}$ be a sequence that is $\mathscr{D}_{j}$-marked with $(\phi_{i})_{i=2}^{n}, (\psi_{i})_{i=1}^{n-1}$. Let also $w: = g_{1} \cdots g_{n}$. Then $w$ is $\mathscr{D}_{j+1}$-witnessed by $[g_{1} \psi_{1}^{-1} x_{0}, g_{1}x_{0}]$. If, moreover, $(g_{i})_{i}$ is $\mathscr{D}_{j}$-strongly marked with $(\phi_{i})_{i=1}^{n}$, $(\psi_{i})_{i=1}^{n-1}$, then $w$ is $\mathscr{D}_{j+1}$-witnessed by $\phi_{1}$.
\end{lem}

\begin{proof}
Let $C_{Grom}$ be the constant obtained from Lemma \ref{lem:distantOrbits}, and let $K = \mathscr{C}_{0}(\epsilon_{j}) + \mathscr{D}_{0}(\epsilon_{j}) + 2\mathscr{D}_{j} + C_{Grom} + 1$. Recall that we have assumed in Condition \ref{eqn:largeJump} that $d(x_{0}, \phi^{\pm} x_{0}) \ge 2K + 2\mathscr{D}_{j+1}$.

We induct on the number of segments. For $n=1$, the conclusion follows from Lemma \ref{lem:singleSegment} since $\mathscr{D}_{j} \le \mathscr{D}_{j+1}$.

Now suppose that the theorem holds for $(g_{2}, \ldots, g_{n})$. By induction hypothesis, there exists $q_{2} \in [g_{1} x_{0}, wx_{0}]$ such that $[g_{1} x_{0}, q_{2}]$ and $[g_{1} x_{0}, g_{1} \phi_{2} x_{0}]$ $\mathscr{D}_{j+1}$-fellow travel. Note that the length of $[g_{1} x_{0}, q_{2}]$ is at least $d(x_{0}, \phi_{2} x_{0}) - 2\mathscr{D}_{j+1} \ge 2K$.

There exists $q_{1} \in [x_{0}, g_{1} x_{0}]$ such that $[g_{1} x_{0}, q_{1}]$ and $[g_{1} x_{0}, g_{1} \psi_1^{-1} x_{0}]$ $\mathscr{D}_{j}$-fellow travel. Then $[g_{1} x_{0}, q_{1}]$ is $\epsilon_{j}$-thick and its length is at least $d(x_{0}, \psi_1^{-1} x_{0}) - 2\mathscr{D}_{j} \ge 2K$.

Let $\eta = [y, y']$ be the subsegment of $[q_{1}, g_{1} x_{0}] \subseteq [x_{0}, g_{1} x_{0}]$ such that $d(y, y') = \mathscr{C}_{0}(\epsilon_{j})$ and $d(y', g_{1} x_{0}) = K$. By Theorem \ref{thm:Rafi2}, at least one of $[x_{0}, wx_{0}]$ and $[g_{1} x_{0}, wx_{0}]$ intersects the $\mathscr{D}_{0}(\epsilon_{j})$-neighborhood of $\eta$. 

Suppose that there exists a point $p \in [g_{1} x_{0}, wx_{0}]$ that belongs to the $\mathscr{D}_{0}(\epsilon_{j})$-neighborhood of $\eta$. We then have \[\begin{aligned}
d(g_{1} x_{0}, p) &\le d(g_{1} x_{0}, \eta) + \operatorname{diam}(\eta) + \mathscr{D}_{0}(\epsilon_{j})\\
&= K+ \mathscr{C}_{0}(\epsilon_{j}) + \mathscr{D}_{0}(\epsilon_{j}) \\
&< 2K\le d(g_{1} x_{0}, q_{2}).
\end{aligned}
\]
Hence, $d(g_{1}\psi_{1}^{-1}x_{0}, g_{1} \phi_{2} x_{0})$ is at most: \begin{equation}\label{eqn:singleDirection1}\begin{aligned}
& \le d(g_{1} \psi_{1}^{-1} x_{0},  q_{1}) + d(q_{1}, y) + d(y, p) + d(p, q_{2}) + d(q_{2}, g_{1}\phi_{2} x_{0}) \\
&= d(g_{1} \psi_{1}^{-1} x_{0}, q_{1}) + [d(q_{1}, g_{1} x_{0}) - d(y, g_{1} x_{0})] + d(y, p) \\
&  \quad + [ d(g_{1} x_{0}, q_{2}) - d(g_{1} x_{0}, p)] + d(q_{2}, g_{1} \phi_{2} x_{0}) \\
&\le  d(g_{1} \psi_{1}^{-1} x_{0}, q_{1}) + [d(q_{1}, g_{1} \psi_{1}^{-1} x_{0}) + d(g_{1} \phi_{1}^{-1} x_{0}, g_{1} x_{0})] - d(y, g_{1} x_{0}) + d(y, p) \\
& \quad + [d(g_{1} x_{0}, g_{1} \phi_{2} x_{0}) + d(g_{1}\phi_{2} x_{0}, q_{2})] - d(g_{1} x_{0}, p) + d(q_{2}, g_{1} \phi_{2} x_{0}) \\
& \le d(g_{1} \phi_{1}^{-1} x_{0}, g_{1} x_{0}) + d(g_{1} x_{0}, g_{1} \phi_{2} x_{0}) - 2d(y, g_{1} x_{0}) \\
& \quad + 2d(y, p) +  2d(q_{1}, g_{1}\psi_{1}^{-1} x_{0}) + 2d(q_{2}, g_{1} \phi_{2} x_{0}) \\
&\le d(g_{1} \phi_{1}^{-1} x_{0}, g_{1} x_{0}) + d(g_{1} x_{0}, g_{1} \phi_{2} x_{0}) - 2[K - \mathscr{D}_{0}(\epsilon_{j}) - \mathscr{C}_{0}(\epsilon_{j}) - 2\mathscr{D}_{j}].
\end{aligned}
\end{equation}
This implies that $(\phi_{2} x_{0}, \psi_{1}^{-1} x_{0})_{x_{0}} > C_{Grom}$, a contradiction.

Hence, we instead obtain a point $p \in [x_{0}, wx_{0}]$ that belongs to the $\mathscr{D}_{0}(\epsilon_{j})$-neighborhood of $\eta$. Then $p$ is within distance $\mathscr{D}_{0}(\epsilon_{j}) + \mathscr{C}_{0}(\epsilon_{j}) + K$ from $g_{1} x_{0}$. Then Corollary \ref{cor:Rafi1} tells us that $[x_{0}, g_{1} x_{0}]$ and $[x_{0}, p]$ $\mathscr{B}(\epsilon, 2K)$-fellow travel. Since $[g_{1}\psi_{1}^{-1} x_{0}, g_{1} x_{0}]$ $\mathscr{D}_{j}$-fellow travel with a terminal subsegment of $[x_{0}, g_{1} x_{0}]$, we conclude that $[g_{1} \psi_{1}^{-1}x_{0}, g_{1} x_{0}]$ and a terminal subsegment of $[x_{0}, p]$ $\mathscr{D}_{j+1}$-fellow travel. This establishes the first item.

If $[x_{0}, \phi_{1} x_{0}]$ $\mathscr{D}_{j}$-fellow travel with an initial subsegment of $[x_{0}, g_{1} x_{0}]$, we also conclude that an initial subsegment of $[x_{0}, p]$ and $[x_{0}, \phi_{1}x_{0}]$ $\mathscr{D}_{j+1}$-fellow travel. This establishes the second item.
\end{proof}

Several corollaries of Lemma \ref{lem:singleDirection} follow.

\begin{cor}\label{cor:distinctGuide}
Let $\phi_{1}, \phi_{2} \in \{\phi_{+}, \phi_{+}^{-1}, \phi_{-}, \phi_{-}^{-1}\}$ and $v\in G$. Suppose $v$ is  $\mathscr{D}_{5}$-witnessed by $\phi_{1}$ and $\phi_{2}$. Then $\phi_{1} = \phi_{2}$.
\end{cor}

\begin{proof}
Suppose not, i.e., $\phi_{1} \neq \phi_{2}$. This implies that the sequence $(v^{-1}, v)$ is $\mathscr{D}_{5}$-marked with $\phi_{1}$, $\phi_{2}$, which are repulsive. Lemma \ref{lem:singleDirection} then implies that $[x_{0}, x_{0}] = \{x_{0}\}$ is $\mathscr{D}_{6}$-witnessed by $[v^{-1}\phi_{2}^{-1}x_{0}, v^{-1}x_{0}]$, which is impossible since $d(x_{0}, \phi_{2} x_{0}) > 2\mathscr{D}_{6}$.
\end{proof}

\begin{cor}\label{cor:bothDirection}
Let $1\le j \le 4$, $(\phi_{i})$, $(\psi_{i})$ be repulsive sequences and $(g_{i})_{i=1}^{n}$ be a sequence that is $\mathscr{D}_{j}$-marked with $(\phi_{i})_{i=2}^{n}$ and $(\psi_{i})_{i=1}^{n-1}$. Let $w = g_{1} \cdots g_{n}$ and $i \in \{1, \ldots, n-1\}$. Then: \begin{enumerate}
\item $[x_{0}, w x_{0}]$ is $\mathscr{D}_{j+2}$-witnessed by $[g_{1} \cdots g_{i} x_{0}, g_{1} \cdots g_{i} \phi_{i+1} x_{0}]$;
\item $[x_{0}, wx_{0}]$ is $\mathscr{D}_{j+2}$-witnessed by $[g_{1} \cdots g_{i} \psi_{i}^{-1} x_{0}, g_{1} \cdots g_{i} x_{0}]$, and
\item $d(x_{0}, wx_{0}) \ge d(x_{0}, g_{1} \cdots g_{i} x_{0}) + d(x_{0}, g_{i+1} \cdots g_{n} x_{0}) - \mathscr{D}_{j+2}$. 
\end{enumerate}
\end{cor}

\begin{proof}
Note that $(g_{i+1}, \ldots, g_{n})$ is $\mathscr{D}_{j}$-strongly marked with $(\phi_{i+j})_{j=1}^{n-i}$, $(\psi_{i+j})_{j=1}^{n-i-1}$ and $(g_{i}^{-1}, \ldots, g_{1}^{-1})$ is $\mathscr{D}_{j}$-strongly marked with $(\psi_{i+1-j}^{-1})_{j=1}^{i}$, $(\phi_{i+1-j}^{-1})_{j=1}^{i-1}$. Thus, by Lemma \ref{lem:singleDirection}, $g_{i+1} \cdots g_{n}$ is $\mathscr{D}_{j+1}$-witnessed by $\phi_{i+1}$ and $g_{i}^{-1} \cdots g_{1}^{-1}$ is $\mathscr{D}_{j+1}$-witnessed by $\psi_{i}^{-1}$. In other words, $(g_{1} \cdots g_{i}, g_{i+1} \cdots g_{n})$ is $\mathscr{D}_{j+1}$-marked with $\phi_{i+1}$, $\psi_{i}$. We then apply Lemma \ref{lem:singleDirection} again to deduce the conclusion.
\end{proof}

The next corollary plays the role of Lemma \ref{lem:almostAdditive} on the Teichm{\"u}ller space.

\begin{cor}\label{cor:severalDirection}
Let $1\le j \le 4$, $(\phi_{i})_{i=2}^{n}$, $(\psi_{i})_{i=1}^{n-1}$ be repulsive sequences, $(g_{i})_{i=1}^{n}$ be a step sequence $\mathscr{D}_{j}$-marked with $(\phi_{i})$, $(\psi_{i})$, and $1=t_{0} < t_{1} < \cdots < t_{k}=n$. Then \[
\left|d(x_{0}, g_{1} \cdots g_{n} x_{0}) - \sum_{i=1}^{k} d(x_{0}, g_{t_{i-1}+1} \cdots g_{t_{i}} x_{0}) \right| \le (k-1)\mathscr{D}_{j+2}
\]
holds.
\end{cor}

\begin{proof}
Note that a subsequence of a step sequence marked with repulsive sequences is again marked by the corresponding repulsive subsequences. Thus, 
using induction on $k$, it suffices to prove the result for $k=2$. Then Corollary \ref{cor:bothDirection} applies.
\end{proof}

\subsection{Translation lengths of mapping classes}

So far, we observed that the directions witnessed by repulsive sequences of mapping classes are persistent in the final geodesic. Our next aim is to relate these recorded directions with the translation length of $w$ and analyze the effect of pivoting.

\begin{lem}\label{lem:transTech}
Let $h_{1}, h_{2} \in G$ and $\phi, \psi_{1}, \psi_{2} \in \{\phi_{+}, \phi_{+}^{-1}, \phi_{-}, \phi_{-}^{-1}\}$ be such that the following hold: \begin{itemize}
\item $d(x_{0}, h_{1} x_{0}) \ge d(x_{0}, h_{2} x_{0}) + Z$,
\item $h_{1}^{-1}$ is $\mathscr{D}_{3}$-witnessed by $\phi$, and
\item $\psi_{1} \neq \psi_{2}$.
\end{itemize}
Then the following hold: \begin{enumerate}
\item $h_{1}^{-1} h_{2} \psi_{i}$ is $\mathscr{D}_{4}$-witnessed by $\phi$ for each $i=1, 2$.
\item If $\psi_{1}^{-1} h_{2}^{-1} h_{1}$ is not $\mathscr{D}_{2}$-witnessed by $\psi_{1}^{-1}$, then $\psi_{2}^{-1} h_{2}^{-1} h_{1}$ is $\mathscr{D}_{2}$-witnessed by $\psi_{2}^{-1}$. 
\item If $\psi_{2}^{-1} h_{2}^{-1} h_{1}$ is not $\mathscr{D}_{2}$-witnessed by $\psi_{2}^{-1}$, then $\psi_{1}^{-1} h_{2}^{-1} h_{1}$ is $\mathscr{D}_{2}$-witnessed by $\psi_{1}^{-1}$. 
\end{enumerate}
\end{lem}

\begin{proof}
Let us first establish (1). The assumption tells us that $[h_{1}x_{0}, x_{0}]$ contains an initial subsegment $[h_{1}x_{0}, q_{1}]$ that $\mathscr{D}_{3}$-fellow travels with $[h_{1}x_{0}, h_{1}\phi x_{0}]$. This implies that $[h_{1} x_{0}, q_{1}]$ is $\epsilon_{3}$-thick. Moreover, the length of $[h_{1}x_{0}, q_{1}]$ is at least $d(x_{0}, \phi x_{0}) - \mathscr{D}_{3} \ge \mathscr{C}_{0}(\epsilon_{3})$ and at most $d(x_{0}, \phi x_{0}) + \mathscr{D}_{3}$.

Let $\eta = [y, q_{1}]$ be the subsegment of $[h_{1}x_{0}, q_{1}] \subseteq [h_{1}x_{0}, x_{0}]$ such that $d(y, q_{1}) = \mathscr{C}_{0}(\epsilon_{3})$. By Theorem \ref{thm:Rafi2}, at least one of $[h_{1} x_{0}, h_{2}\psi_{i} x_{0}]$ and $[h_{2}\psi_{i} x_{0}, x_{0}]$ intersects the $\mathscr{D}_{0}(\epsilon_{3})$-neighborhood of $\eta$.

If there exists a point $p \in [h_{2}\psi_{i} x_{0}, x_{0}]$ that belongs to the $\mathscr{D}_{0}(\epsilon_{3})$-neighborhood of $\eta$, we have \[ \begin{aligned}
d(x_{0}, h_{2} x_{0}) &\ge d(x_{0}, h_{2}\psi_{i} x_{0}) - d(h_{2} x_{0}, h_{2}\psi_{i} x_{0}) \\
&\ge d(x_{0}, p) - d(x_{0}, \psi_{i} x_{0}) \\
&\ge d(x_{0}, \eta) - \operatorname{diam}(\eta) - \mathscr{D}_{0}(\epsilon_{3}) - d(x_{0}, \psi_{i} x_{0}) \\
&\ge d(x_{0}, h_{1} x_{0})  - [d(x_{0}, \phi x_{0}) + \mathscr{D}_{3}] -\mathscr{C}_{0}(\epsilon_{3}) -\mathscr{D}_{0}(\epsilon_{3}) - d(x_{0}, \psi_{i} x_{0}).
\end{aligned}
\]
This contradicts the assumption that $d(x_{0}, h_{1} x_{0}) \ge d(x_{0}, h_{2} x_{0}) + Z$.

Hence, $[h_{1} x_{0}, h_{2}\psi_{i} x_{0}]$ contains a point $p$ that belongs to the $\mathscr{D}_{0}(\epsilon_{3})$-neighborhood of $\eta$. Then \[\begin{aligned}
d(p, h_{1} \phi x_{0}) &\le d(p, \eta) + \operatorname{diam}(\eta)  + d(q_{1}, h_{1} \phi x_{0}) \\
&\le \mathscr{D}_{0}(\epsilon_{3}) + \mathscr{C}_{0}(\epsilon_{3}) + \mathscr{D}_{3}
\end{aligned}
\]
and $[h_{1} x_{0}, p]$ $\mathscr{D}_{4}$-fellow travels with $[h_{1}x_0, h_{1}\phi x_{0}]$. 

Let us now establish (2). Let $K = 2\mathscr{C}_{0}(\epsilon) + 2\mathscr{D}_{0}(\epsilon) + C_{Grom}$. Recall that $[h_{2} x_{0}, h_{2} \psi_{1} x_{0}]$ is an $\epsilon$-thick geodesic whose length is at least $\mathscr{C}_{0}(\epsilon) + K$. 

Let $\eta = [y, y']$ be the subsegment of $[h_{2} x_{0}, h_{2}\psi_{1} x_{0}]$ with $d(y, y') = \mathscr{C}_{0}(\epsilon)$ and $d(h_{2}x_{0}, y) = K$. By Theorem \ref{thm:Rafi2}, either $[h_{1}x_{0}, h_{2}\psi_{1} x_{0}]$ or $[h_{1} x_{0}, h_{2}x_{0}]$ intersects the $\mathscr{D}_{0}(\epsilon)$-neighborhood of $\eta$. If there exists a point $p \in [h_{1}x_{0}, h_{2}\psi_{1} x_{0}]$ that belongs to $N_{\mathscr{D}_{0}(\epsilon)}(\eta)$, then $d(p, h_{2} x_{0}) \le \mathscr{C}_{0}(\epsilon) + \mathscr{D}_{0}(\epsilon) + K$. This implies that $[p, h_{2}\psi_{1} x_{0}]$ and $[h_{2} x_{0}, h_{2}\psi_{1} x_{0}]$ $\mathscr{D}_{2}$-fellow travel, which contradicts the assumption. Hence, we instead have a point $p \in [h_{1}x_{0}, h_{2}x_{0}]$ that belongs to $N_{\mathscr{D}_{0}(\epsilon)}(\eta)$.

We now consider a subsegment $\eta' = [z, z']$ of $[h_{2}x_{0}, h_{2} \psi_{2} x_{0}]$ with $d(z, z') = \mathscr{C}_{0}(\epsilon)$ and $d(h_{2}x_{0}, z) = K$.  By Theorem \ref{thm:Rafi2}, either $[h_{1}x_{0}, h_{2} \psi_{2} x_{0}]$ or $[h_{1}x_{0}, h_{2}x_{0}]$ intersects the $\mathscr{D}_{0}(\epsilon)$-neighborhood of $\eta$. 

Suppose that there exists a point $q \in [h_{1} x_{0}, h_{2}x_{0}]$ that belongs to $N_{\mathscr{D}_{0}(\epsilon)}(\eta')$. Then from the inequalities  \[\begin{aligned}
|d(h_{2}x_{0}, p) - d(h_{2}x_{0}, y)| &\le \operatorname{diam}(\eta) + \mathscr{D}_{0}(\epsilon), \\
|d(h_{2}x_{0}, q) - d(h_{2}x_{0}, z)| &\le \operatorname{diam}(\eta') + \mathscr{D}_{0}(\epsilon)
\end{aligned}
\]and the fact that $h_{2} x_{0}, p, q$ are on the same geodesic, we have \[\begin{aligned}
d(p, q) &= |d(h_{2} x_{0}, p) - d(h_{2} x_{0}, q)|\\
&\le \operatorname{diam}(\eta) + \operatorname{diam}(\eta') + 2\mathscr{D}_{0}(\epsilon).
\end{aligned}
\]
This in turn implies \[\begin{aligned}
d(h_{2} \psi_{1} x_{0},  h_{2} \psi_{2} x_{0}) &\le d(h_{2} \psi_{1} x_{0}, y) + d(y, p) + d(p, q) + d(q, z) + d(z, h_{2} \psi_{2} x_{0}) \\
&\le [d(h_{2} x_{0}, h_{2} \psi_{1} x_{0}) - K] + [\mathscr{C}_{0}(\epsilon) + \mathscr{D}_{0}(\epsilon)] + [2\mathscr{C}_{0}(\epsilon) + 2 \mathscr{D}_{0}(\epsilon)] \\
& \quad+  [\mathscr{C}_{0}(\epsilon) + \mathscr{D}_{0}(\epsilon)] +  [d(h_{2} x_{0}, h_{2} \psi_{2} x_{0}) - K] \\
&\le d(h_{2} x_{0}, h_{2}\psi_{1} x_{0}) + d(h_{2} x_{0}, h_{2} \psi_{2} x_{0}) - 2C_{Grom}.
\end{aligned}
\]
This contradicts the fact that $(\psi_{1} x_{0}, \psi_{2} x_{0})_{x_{0}} < C_{Grom}$. 

Hence, we instead have a point $q \in [h_{1}x_{0}, h_{2} \psi_{2} x_{0}]$ that is within distance $\mathscr{D}_{0}(\epsilon)$ from $\eta$. Then we have \[
d(q, h_{2} x_{0}) \le K + \mathscr{C}_{0}(\epsilon) + \mathscr{D}_{0}(\epsilon) \le 3\mathscr{C}_{0}(\epsilon) + 3\mathscr{D}_{0}(\epsilon) + C_{Grom}.
\]
Corollary \ref{cor:Rafi1} then tells us that $[q, h_{2}\psi_{2} x_{0}]$ and $[h_{2} x_{0}, h_{2} \psi_{2} x_{0}]$ $\mathscr{D}_{2}$-fellow travel as desired.

(3) is deduced from a similar argument.
\end{proof}

\section{Random walks on Teichm{\"u}ller space}

In this section, we adapt the proof of Theorem \ref{thm:A} to deal with Teichm{\"u}ller space. Before delving into details, we briefly sketch our plan. We will define persistent joints in sample paths and pivot the path at those joints as in Section \ref{section:translation}. (See Figure \ref{fig:pivoting}.) The basic philosophy of Section \ref{section:translation} was using Property \ref{property:Gromov} for hyperbolic spaces. More precisely, we applied Property \ref{property:Gromov} to the Gromov products \[
(x_{n \rightarrow 0}^{\sigma}, x_{n \rightarrow 0}^{\kappa})_{x_{0}} >> (x_{n \rightarrow 0}^{\kappa}, x_{n}^{\kappa})_{x_{0}} >> (x_{n}^{\kappa}, x_{n}^{\sigma})_{x_{0}}
\]
to conclude that $(x_{n}^{\sigma}, x_{n\rightarrow 0}^{\sigma})_{x_{0}}$ is small enough. This led to the lower bound on $\tau(w_{n}^{\sigma})$.

Our aim is to copy this phenomenon to the Teichm\"uller space $X$. However,  several issues arise due to the fact that $X$ is not Gromov hyperbolic: \begin{itemize}
\item Property \ref{property:Gromov} among the Gromov products may not hold;
\item small $(x_{n}^{\sigma}, x_{n \rightarrow 0}^{\sigma})_{x_{0}}$ may not lead to large translation length of $w_{n}^{\sigma}$.
\end{itemize}

In order to overcome these difficulties, we copy the following property of Gromov hyperbolicity: for a geodesic triangle with vertices $x$, $y$ and $z$, the edge $[x, y]$ fellow travels with either $[x, z]$ or $[z, x]$. Thanks to Rafi's theorems (Corollary \ref{cor:Rafi1} and Theorem \ref{thm:Rafi2}), we can partially guarantee such a phenomenon among certain Teichm{\"u}ller geodesics witnessed by pseudo-Anosov mapping classes.

We now begin our discussion. Recall that $\mu$ is a non-elementary probability measure on the mapping class group $G$. We have fixed two independent pseudo-Anosov mapping classes $\phi_{+}$, $\phi_{-}$ in $\supp \mu^{L}$ for some $L>0$. Here, $\phi_{\pm}$ are associated with constants $\epsilon, \mathscr{M}, C_{Grom}$ and $\mathscr{F}$ that satisfy Lemma \ref{lem:thick}, Lemma \ref{lem:distantOrbits} and Corollary \ref{cor:eventualWitness}. We have also defined constants $\mathscr{D}_{j}, \epsilon_{j}$ as in Display \ref{eqn:DEDfn} and assumed Inequality \ref{eqn:largeJump}.

Let $a_{i}, b_{i} \in \supp \mu$ be the letters for $\phi_{\pm}$ satisfying $\phi_{+}= b_{1} \cdots b_{L}$ and $\phi_{-}= a_{1} \cdots a_{L}$. As before, we fix the following notations: \[\begin{aligned}
p_{+} &=  \mu(a_{1}) \cdots \mu(a_{L}) , \\ p_{-} &= \mu(b_{1}) \cdots \mu(b_{L}), \\ 
P &= \max\left(\frac{p_{+}}{p_{+}+ p_{-}}, \frac{p_{-}}{p_{+} + p_{-}}\right). \end{aligned}
\]

This time, we define $\chi_{k}(\omega)$ as follows. $\chi_{k}(\omega) = 1$ if 
\begin{enumerate}
\item \[
(g_{3(k-1)L + 1}, \ldots, g_{3kL})= \left\{ \begin{array}{c} (b_{1}, \ldots, b_{L}, a_{1}, \ldots, a_{L}, b_{1}, \ldots, b_{L}) \\ \textrm{or} \\  (b_{1}, \ldots, b_{L}, b_{1}, \ldots, b_{L}, b_{1}, \ldots, b_{L})\end{array}\right.,
\]
\item $[x_{0}, x_{(3k-2)L \rightarrow n}]$ is $\mathscr{D}_{1}$-witnessed by $\phi_{+}^{-1}$ for $n \le 3(k-1)L$, and 
\item $[x_{0}, x_{(3k-1)L \rightarrow n}]$ is $\mathscr{D}_{1}$-witnessed by $\phi_{+}$ for $n \ge 3kL$,
\end{enumerate}
and $\chi_{k}(\w)=0$ otherwise.

We first observe that $\E(\chi_{1}(\w)) > 0$. The probability for condition (1) is $(p_{+} + p_{-}) p_{-}^{2} \neq 0$. Given (1) as the prior condition, (2) and (3) become independent events. Moreover, since $\mathscr{F} \le \mathscr{D}_{1}$, Corollary \ref{cor:eventualWitness} tells us that (2) and (3) hold for nonzero probability. Hence, we conclude that $\eta := \Prob(\chi_{1}(\w) =1) \neq 0$.

Note that $\chi_{k}(\w) = \chi_{1}(T^{3(k-1)L} \w)$. We define $W_{n} = \sum_{k=1}^{n} \chi_{k}(\w)$. Then $W_{n+m} = W_{n} + W_{m} \circ T^{3Ln}$ holds. Since $W_{1}$ is bounded, it has finite first moment. Applying Theorem \ref{thm:Woess}, we get almost everywhere convergence of $\frac{1}{n} W_{n}$ to an a.e. constant variable $W_{\infty}$. Since $\mathbb{E}(\frac{1}{n} W_{n}) = \eta > 0$, we have $W_{\infty} = \eta$ a.e. 

We also consider a modified version of $W_{n}$ as in Section \ref{section:translation}. Given positive integers $n \le m$, we say that $\mathcal{N}_{m, n} = \{n_1 < \cdots < n_k\} \subseteq 3L\Z$ is an $(m, n)$-set of pivots for $\vec{w} = (w_{1}, \cdots, w_{m})$ if the following holds:
\begin{enumerate}
\item $\mathcal{N}_{m, n} \subseteq \{1, \ldots, n\}$;
\item for each $i$, \[
\left(g_{n_{i}-3L+1}, \ldots, g_{n_{i}}\right)= \left\{ \begin{array}{c} \left(b_{1}, \ldots, b_{L}, a_{1}, \ldots, a_{L}, b_{1}, \ldots, b_{L}\right) \\ \textrm{or, } \\  \left(b_{1}, \ldots, b_{L}, b_{1}, \ldots, b_{L}, b_{1}, \ldots, b_{L}\right)\end{array}\right.,
\]
\item for each $i$, $[x_{0}, x_{n_{i} - 2L\rightarrow j}]$ is $\mathscr{D}_{1}$-witnessed by $\phi_{+}^{-1}$ for $n_{i-1} - L \le j \le n_{i} - 3L$, and 
\item for each $i$, $[x_{0}, x_{n_{i} - L\rightarrow j}]$ is $\mathscr{D}_{1}$-witnessed by $\phi_{+}$ for $n_{i} \le j \le n_{i+1} - 2L$.
\end{enumerate}
For convenience, we set $n_{0} = L$ and $n_{k+1} = m+2L$. 

As before, we denote the maximal $(m, n)$-set of pivots of $\vec{w}$ by $\mathcal{N}(\vec{w}) = \mathcal{N}_{m, n}(\vec{w})$, whose cardinality is denoted by $W_{n}^{m}$. Note that $W_{n}^{n} \ge W_{\lfloor n/3L \rfloor}$ always hold.

Now given a finite path $\vec{w} = (w_{1}, \ldots, w_{n})$ with $\mathcal{N}_{n, n}(\vec{w}) = \{n_{1} < \ldots < n_{k}\}$, we define the following: \[\begin{aligned}
A_{i}'(\vec{w}) &:= n_{i}(\vec{w}) - 3L,\\
\alpha_{i}'(\vec{w}) &:= n_{i}(\vec{w}) - 2L,\\
\beta_{i}'(\vec{w}) &:= n_{i}(\vec{w}) - L, \\
B_{i}'(\vec{w}) &= n_{i}(\vec{w}).
\end{aligned}
\]
Next, we set \[
(h_{2i-1}', h_{2i}') := (w_{\alpha_{i}'(\vec{w})}, w_{\beta_{i}'(\vec{w})})
\]
for $i=1, \ldots, k$ and $h_{0}' = id$, $h_{2k+1}' = w_{n}$. We can now discuss an analogue of Lemma \ref{lem:wellSevered}:

\begin{lem}\label{lem:wellSeveredTeich}
The sequence $(h_{i-1}'^{-1} h_{i}')_{i=1}^{2k+1}$ is $\mathscr{D}_{1}$-marked with repulsive sequences \[
(\phi_{+}, \phi_{1}', \phi_{+}, \phi_{2}', \ldots, \phi_{k}'), \,\, (\phi_{1}', \phi_{+}, \phi_{2}', \phi_{+}, \ldots, \phi_{+}),
\]
where $\phi_{i}' := h_{2i-1}'^{-1} h_{2i}'$ is either $\phi_{+}$ or $\phi_{-}$. We also have $h_{i}'^{-1} h_{j} \in \mathcal{C}_{\mathscr{D}_{2}}(\psi \rightarrow \psi')$ for $0 \le i < j \le 2k+1$, where \[
\psi = \left\{ \begin{array}{cc} \phi_{+} & \textrm{$i$ is odd} \\  \phi_{i/2}' & \textrm{$i$ is even} \end{array}\right.,\quad\psi' = \left\{ \begin{array}{cc} \phi_{(i+1)/2}' & \textrm{$i$ is odd} \\  \phi_{+} & \textrm{$i$ is even} \end{array}\right..
\]
Here, we set $\phi_{0}' = \phi_{k+1}' = id$ for convenience.

Moreover, we have \begin{align}\label{eqn:wellSeveredTeich1}
(h_{i}' x_{0}, h_{l}' x_{0})_{h_{j}' x_{0}} &\le \mathscr{D}_{3},\\ \label{eqn:wellSeveredTeich2}
d(h_{i}' x_{0}, h_{l}'x_{0}) &\ge d(h_{i}'x_0, h_{j}' x_{0}) + \mathscr{D}_{3}(l - j)
\end{align}
for each $0 \le i \le j \le l \le 2k+1$.
\end{lem}

\begin{proof}
For each $i = 1, \ldots, k$, $h_{2i-1}'^{-1} h_{2i}' = w_{\alpha_{i}'(\vec{w})}^{-1} w_{\beta_{i}'(\vec{w})} = \phi_{i}'$ for some $\phi_{i}' \in \{\phi_{+}, \phi_{-}\}$. Lemma \ref{lem:selfWitness} then tells us that $h_{2i-1}'^{-1} h_{2i}' \in \mathcal{C}_{\mathscr{D}_{1}}(\phi_{i}' \rightarrow \phi_{i}')$.

Moreover, for each $i=2, \ldots, k$, Condition (3) and (4) for an $(m,n)$-set of pivots tell us that:\begin{enumerate}
\item $[x_{0}, x_{n_{i} - 2L\rightarrow n_{i-1} - L}] = [x_{0}, x_{\alpha_{i}'(\vec{w})\rightarrow \beta_{i-1}'(\vec{w})}]$ is $\mathscr{D}_{1}$-witnessed by $\phi_{+}^{-1}$, and 
\item $[x_{0}, x_{n_{i-1} - L\rightarrow n_{i} - 2L}] = [x_{0}, x_{\beta_{i-1}'(\vec{w})\rightarrow \alpha_{i}'(\vec{w})}]$ is $\mathscr{D}_{1}$-witnessed by $\phi_{+}$.
\end{enumerate}
This means that $h_{2i-2}'^{-1} h_{2i-1}' = w_{\beta_{i-1}'(\vec{w})}^{-1} w_{\alpha_{i}'(\vec{w})}$ belongs to $\mathcal{C}_{\mathscr{D}_{1}}(\phi_{+} \rightarrow \phi_{+})$. Similarly, we observe $h_{1}' \in \mathcal{C}_{\mathscr{D}_{1}}(0 \rightarrow \phi_{+})$ and $h_{2k+1}' \in \mathcal{C}_{\mathscr{D}_{1}}(\phi_{+}\rightarrow 0)$. Since $\phi_{i}'$ is either $\phi_{+}$ or $\phi_{-}$, $\phi_{i}'$ and $\phi_{+}^{-1}$ are distinct for each $i$. This concludes that $(h_{i-1}'^{-1} h_{i}')_{i=1}^{2k+1}$ is $\mathscr{D}_{1}$-marked with repulsive sequences \[
(\phi_{+}, \phi_{1}', \phi_{+}, \phi_{2}', \ldots, \phi_{k}), \,\, (\phi_{1}', \phi_{+}, \phi_{2}', \phi_{+}, \ldots, \phi_{+}).
\]
Lemma \ref{lem:singleDirection} then tells us that $h_{i}'^{-1} h_{j} \in  \mathcal{C}_{\mathscr{D}_{2}} (\psi \rightarrow \psi')$ for each pair $(i, j)$ and the corresponding $\psi$, $\psi'$. Moreover, Corollary \ref{cor:bothDirection} implies Inequality \ref{eqn:wellSeveredTeich1} for each triple $(i, j, l)$. 

Finally, for each $i = 2, \ldots, 2k+1$, $h_{i-1}'^{-1} h_{i}'$ is $\mathscr{D}_{1}$-witnessed by either $\phi_{+}$ or $\phi_{-}$. Moreover, $h_{1}'^{-1}$ is $\mathscr{D}_{1}$-witnessed by $\phi_{+}^{-1}$. Hence, for $i = 1, \ldots, 2k+1$, we have \[
d(h_{i}' x_{0}, h_{i+1}' x_{0}) \ge d(x_{0}, \phi_{\pm} x_{0}) - 2\mathscr{D}_{1} \ge 4 \mathscr{D}_{3}.
\] Combining this with Inequality \ref{eqn:wellSeveredTeich1} yields Inequality \ref{eqn:wellSeveredTeich2}.
\end{proof}

We now set \[\begin{aligned}
M&>1000(1 + L + L/\mathscr{D}_{3} + \mathscr{D}_{3} + Z),\\
&D =10\eta/M,\quad 0.9 \le Q \le 1, \\
\end{aligned}
\]
and \[\begin{aligned}
F_{n} &:=\left\{ \vec{w} = (w_{1}, \cdots, w_{n}) : W_{n}^{n} \ge \frac{\eta n}{6L}+1\right\}, \\ 
G_{n}&:= \left\{ \vec{w} \in F_{n}: \tau(w_{n}) \le \left(2D-\frac{2\eta}{M}\right)n\right\} .
\end{aligned}
\]

As before, for $\vec{w} \in F_{n}$ and $Q> 0$ we also define \[\begin{aligned}
\mathcal{N}_{f}(\vec{w};Q) &:= \left\{ s \in \mathcal{N}(\vec{w}) : d(x_{0}, x_{s - L}) \le \frac{1}{2} d(x_{0}, x_{n}) -(D+Q\eta/M)n \right\},\\ 
\mathcal{N}_{b}(\vec{w};Q) &= \left\{ s \in \mathcal{N}(\vec{w}): d(x_{n}, x_{s - 2L}) \le \frac{1}{2} d(x_{0}, x_{n}) - (D+Q\eta/M)n\right\},\\
\mathcal{N}_{0}(\vec{w};Q) &:= \mathcal{N}(\vec{w})\setminus (\mathcal{N}_{f}(\vec{w};Q) \cup \mathcal{N}_{b}(\vec{w};Q)).
\end{aligned}
\]

\begin{lem}\label{lem:setsizeTeich}
For sufficiently large $n$, if $\vec{w} \in F_{n}$ and $Q \le 1$ then \[
|\mathcal{N}_{f}(\vec{w};Q)| \ge \frac{\eta n}{20 L}\,\,\textrm{or}\,\, |\mathcal{N}_{b}(\vec{w};Q)| \ge \frac{\eta n}{20 L}
\]
holds.
\end{lem}

\begin{proof}
If not, we have $|\mathcal{N}_{0}(\vec{w}; Q)| \ge \frac{\eta n}{15 L}+1$. As in the proof of Lemma \ref{lem:setsize}, we can take $t, t'$ such that $\mathcal{N}_{0}(\vec{w}; Q) = \{n_{t}, n_{t+1}, \ldots, n_{t'}\}$. Using Inequality \ref{eqn:wellSeveredTeich1} and \ref{eqn:wellSeveredTeich2}, we obtain \[
d(x_{\beta'_{t}}, x_{\alpha_{t'}'})  \ge \mathscr{D}_{3}(t' - t + 1) \ge \frac{1000L}{M} \cdot \frac{\eta n}{15 L} \ge \frac{60\eta n}{M} + 4\mathscr{D}_{3}
\]
for large enough $n$. Moreover, since $n_{t} \notin \mathcal{N}_{f}(\vec{w};Q)$ and $n_{t'} \notin \mathcal{N}_{b}(\vec{w};Q)$, we have \[
d(x_{0}, x_{\beta'_t}), d(x_{\alpha_{t'}'}, x_{n}) > \frac{1}{2} d(x_{0}, x_{n}) - (D + Q\eta / M)n.
\]
Combining these inequalities, we obtain
\[\begin{aligned}
d(x_{0}, x_{n}) &\ge d(x_{0}, x_{\beta_{t}'}) + d(x_{\beta_{t}'}, x_{\alpha_{t'}'}) + d(x_{\alpha_{t'}'}, x_{n}) - 4\mathscr{D}_{3} \\
&\ge 2\left(\frac{1}{2} d(x_{0}, x_{n}) - (D + Q\eta / M)n\right) + \frac{60\eta n}{M} \\
&\ge d(x_{0}, x_{n}) + \frac{\eta n}{M},
\end{aligned}
\]
a contradiction.
\end{proof}

Thus, when $Q \le 1$, \[\begin{aligned}
F_{n, f}(Q) & := \left\{ \vec{w} \in F_{n} : |\mathcal{N}_{f}(\vec{w}; Q)| \ge \frac{\eta n}{20 L} \right\},\\
F_{n, b}(Q) & := \left\{\vec{w} \in F_n : |\mathcal{N}_{b}(\vec{w}; Q)| \ge \frac{\eta n}{20L} \right\}
\end{aligned}
\]
cover entire $F_{n}$. We also define $$G_{n, f}(Q) := F_{n, f}(Q) \cap G_{n} \quad \mbox{and} \quad G_{n, b}(Q) := F_{n, b}(Q)\cap G_{n}.$$
From now on, we will focus on $\vec{w} \in F_{n, f}(Q)$; the argument for $\vec{w} \in F_{n, b}(Q)$ is analogous.

For each $\vec{w} \in F_{n, f}(Q)$, we fix an integer $N =N(\vec{w})$ between $\frac{\eta n}{50M^{2} L}$ and $\frac{\eta n}{20M^{2}L}$ as before and pick pivot indices $p_{1}(\vec{w}), \ldots, p_{N}(\vec{w})$ from $\mathcal{N}_{f}(\vec{w}; Q)$. We also bring the previous notation \[
\begin{aligned}
A_{i}(\vec{w}) &:= p_{i}(\vec{w}) - 3L,\\
\alpha_{i}(\vec{w}) &:= p_{i}(\vec{w}) - 2L,\\
\beta_{i}(\vec{w}) &:= p_{i}(\vec{w}) - L,\\
B_{i} (\vec{w}) &:= p_{i}(\vec{w}).
\end{aligned}
\]
We also let $\beta_{0}(\vec{w}) = B_{0}(\vec{w}) := 0$, $\alpha_{N+1}(\vec{w}) = A_{N+1}(\vec{w}) := n$,  \[
(h_{2i-1}, h_{2i}) := (w_{\alpha_{i}(\vec{w})}, w_{\beta_{i}(\vec{w})})
\]
for $i=1, \ldots, N$ and $h_{0} = id$, $h_{2k+1} = w_{n}$. Since $(\alpha_{i})_{i}$, $(\beta_{i})_{i}$ are subsequences of $(\alpha_{i}')_{i}$ and $(\beta_{i}')_{i}$, respectively, the marking information from Lemma \ref{lem:wellSeveredTeich} and Lemma \ref{lem:singleDirection} tell us the following.

\begin{lem}\label{lem:wellSeveredPivotTeich}
The sequence $(h_{i-1}^{-1} h_{i})_{i=1}^{2k+1}$ is $\mathscr{D}_{2}$-marked with repulsive sequences \[
(\phi_{+}, \phi_{1}, \phi_{+}, \phi_{2}, \ldots, \phi_{k}), \,\, (\phi_{1}, \phi_{+}, \phi_{2}, \phi_{+}, \ldots, \phi_{+}),
\]
where $\phi_{i} = h_{2i-1}^{-1} h_{2i}$ is either $\phi_{+}$ or $\phi_{-}$. Moreover, we have \begin{align}\label{eqn:wellSeveredPivotTeich1}
(h_{i} x_{0}, h_{l} x_{0})_{h_{j} x_{0}} &\le \mathscr{D}_{3},\\ \label{eqn:wellSeveredPivotTeich2}
d(h_{i}x_{0}, h_{l}x_{0}) &\ge d(h_{i}, h_{j} x_{0}) + \mathscr{D}_{3}(l - j)
\end{align}
for each $0 \le i \le j \le l \le 2k+1$.
\end{lem}

For each choice $\sigma \in \{0, 1\}^{N}$, we define the pivoted $\vec{w}^{\sigma}$ as before: we modify the type of joints that are marked by $\sigma$ only. Precisely speaking, the step sequences $(g_{i})_{i}$ and $(g_{i}^{\sigma})_{i}$ for $\vec{w}$ and $\vec{w}^{\sigma}$ coincide except at $\alpha_{j} + 1 \le i \le \beta_{j}$ for some $j$ such that $\sigma(j) = 1$. For $\sigma(j) = 1$ we set \[
(g^{\sigma}_{\alpha_j + 1}, \ldots, g^{\sigma}_{\beta_j}) := \begin{cases}

(a_1, \ldots, a_L) &\mbox{if } (g_{\alpha_j + 1}, \ldots, g_{\beta_j}) = (b_1, \ldots, b_L)\\
(b_1, \ldots, b_L) &\mbox{if } (g_{\alpha_j + 1}, \ldots, g_{\beta_j}) = (a_1, \ldots, a_L)

\end{cases}
\]
Other steps remain unchanged. Now, for \[
(h_{2i-1}^{\sigma}, h_{2i}^{\sigma}) := (w_{\alpha_{i}(\vec{w})}^{\sigma}, w_{\beta_{i}(\vec{w})}^{\sigma}),
\]
we have the following observation:

\begin{lem}\label{lem:wellSeveredPivotTeich2}
The sequence $((h_{i-1}^{\sigma})^{-1} h_{i}^{\sigma})_{i=1}^{2k+1}$ is $\mathscr{D}_{2}$-marked with repulsive sequences \[
(\phi_{+}, \phi_{1}^{\sigma}, \phi_{+}, \phi_{2}^{\sigma}, \ldots, \phi_{k}), \,\, (\phi_{1}^{\sigma}, \phi_{+}, \phi_{2}^{\sigma}, \phi_{+}, \ldots, \phi_{+}),
\]
where $\phi_{i}^{\sigma} \in \{\phi_{+}, \phi_{-}\}$ and $\phi_{i}^{\sigma} = \phi_{i}$ if and only if $\sigma(i) = 0$. Moreover, we have \begin{align}\label{eqn:wellSeveredPivotTeich1}
(h_{i}^{\sigma} x_{0}, h_{l}^{\sigma} x_{0})_{h_{j}^{\sigma} x_{0}} &\le \mathscr{D}_{3},\\ \label{eqn:wellSeveredPivotTeich2}
d(h_{i}^{\sigma}x_{0}, h_{l}^{\sigma}x_{0}) &\ge d(h_{i}^{\sigma}, h_{j}^{\sigma} x_{0}) + \mathscr{D}_{3}(l - j)
\end{align}
for each $0 \le i \le j \le l \le 2k+1$.
\end{lem}

\begin{proof}
Since $g_{i} = g_{i}^{\sigma}$ for all $i$ except for $\alpha_{j+1} \le i \le \beta_{j}$ where $\sigma(j) = 1$. In particular, we have $w_{\beta_{i-1}'(\vec{w}) \rightarrow \alpha_{i}'(\vec{w})}^{\sigma} \in \mathcal{C}_{\mathscr{D}_{1}}(\phi_{+} \rightarrow \phi_{+})$ for various $i$. Moreover, $w_{\alpha_{i}'(\vec{w}) \rightarrow \beta_{i}'(\vec{w})}^{\sigma} = (\phi_{i}'^{\sigma})^{F+1}\in \mathcal{C}_{\mathscr{D}_{1}}(\phi_{i}' \rightarrow \phi_{i}')$ for some $\phi_{i}'^{\sigma} \in \{\phi_{+}, \phi_{-}\}$. Then we have the marking information as in Lemma \ref{lem:wellSeveredTeich} (with constant $\mathscr{D}_{1}$), and the marking information as in Lemma \ref{lem:wellSeveredPivotTeich} (with constant $\mathscr{D}_{2}$) by taking subsequences. Finally, $\phi_{i}'^{\sigma} \neq \phi_{i}'$ if and only if $p_{i}'$ is chosen as a pivot index $p_{j}$ such that $\sigma(j) = 1$. This leads to the condition when $\phi_{i}^{\sigma}$ equals $\phi_{i}$.
\end{proof}

 We now prove an analogy of Lemma \ref{claim:step1} for sufficiently large $n$ in a different way, in the case of the Teichm\"uller space. 

\begin{lem}\label{claim:step1Teich}
Suppose that $n \ge (Z + 2 \mathscr{D}_{6})M/\eta$.	If $\kappa \neq \sigma \in \{0, 1\}^{N}$ and $\tau(w_{n}^{\kappa}) \le (2D - 2\eta /M)n$, then $$\tau(w_{n}^{\sigma}) \ge (2D + \eta/M)n.$$
\end{lem}
 \begin{proof}
 We first need the result of Claim \ref{claim:absolutediff}. Lemma \ref{lem:wellSeveredTeich} tells us that \[\begin{aligned}
\left|d(x_{0}, x_{n}^{\sigma}) - \left(\sum_{i=1}^{N+1} d(x_{\beta_{i-1}}^{\sigma}, x_{\alpha_{i}}^{\sigma}) +\sum_{i=1}^{N} d(x_{\alpha_{i}}^{\sigma}, x_{\beta_{i}}^{\sigma})\right) \right| \le 2N \mathscr{D}_{3}, \\
\left|d(x_{0}, x_{n}^{\kappa}) - \left(\sum_{i=1}^{N+1} d(x_{\beta_{i-1}}^{\kappa}, x_{\alpha_{i}}^{\kappa}) + \sum_{i=1}^{N} d(x_{\alpha_{i}}^{\kappa}, x_{\beta_{i}}^{\kappa})\right) \right| \le 2N \mathscr{D}_{3}.
\end{aligned}
\]
Recall that: \begin{enumerate}
\item $d(x_{\beta_{i-1}}^{\kappa}, x_{\alpha_{i}}^{\kappa}) = d(x_{\beta_{i-1}}^{\sigma}, x_{\alpha_{i}}^{\sigma})$ for all $i$, 
\item $d(x_{\alpha_{i}}^{\kappa}, x_{\beta_{i}}^{\kappa}) = d(x_{\alpha_{i}}^{\sigma}, x_{\beta_{i}}^{\sigma})$ for $i$ such that $\sigma(i) = 0$, and 
\item $|d(x_{\alpha_{i}}^{\kappa}, x_{\beta_{i}}^{\kappa}) - d(x_{\alpha_{i}}^{\sigma}, x_{\beta_{i}}^{\sigma})| \le Z$.
\end{enumerate}
From these, we deduce that \[
|d(x_{0}, x_{n}^{\sigma}) - d(x_{0}, x_{n}^{\kappa})| \le 4N(\mathscr{D}_{3} +Z) \le \frac{\eta n}{5M^{2}L}(\mathscr{D}_{3} + Z) \le \frac{\eta n}{40 M}.
\] For similar reasons, we also get \[
\begin{aligned}
|d(x_n, x_{\alpha_{i}'}^{\kappa}) - d(x_n, x_{\alpha_{i}'}^{\sigma})|& \le {\eta n \over 40M}, \\
|d(x_0, x_{\beta_{i}'}^{\kappa}) - d(x_0, x_{\beta_{i}'}^{\sigma})|& \le {\eta n \over 40M}
\end{aligned}
\]for $i=1, \cdots, N$ by considering partial sums.

Among the indices at which $\sigma$ and $\kappa$ differ, let $i$ and $j$ be the first and the last ones, respectively. We then have \begin{equation}\label{eqn:transTeichFirstLast1} \begin{aligned}
d(x_{0}, x_{\alpha(i)}^{\kappa}) + d(x_{0}, x_{\beta(j)}^{\kappa}) &\le 
d(x_{0}, x_{\alpha(i)}) + d(x_{0}, x_{\beta(j)}) + \frac{\eta n}{20 M} \\
&\le d(x_{0}, x_{n}) - 2(D + Q \eta / M)n + \frac{\eta n}{20 M} \\
&\le d(x_{0}, x_{n}^{\kappa}) - 2(D + 0.9 \eta /M) n \\
&\le d(x_{0}, x_{n}^{\kappa}) - Z
\end{aligned}
\end{equation}
for large enough $n$. We similarly have \begin{equation}\label{eqn:transTeichFirstLast2}
2d(x_{0}, x_{\beta(j)}^{\kappa}) \le d(x_{0}, x_{n}^{\kappa}) - 2(D + 0.95 \eta /M) n.
\end{equation}

Now Inequality \ref{eqn:transTeichFirstLast1} implies$$\begin{aligned}
	d(x_0, x^{\kappa}_{n \to \beta(j)}) & \ge d(x_0, x^{\kappa}_n) - d(x_0, x^{\kappa}_{\beta(j)}) \\
	& \ge d(x_0, x^{\kappa}_{\alpha(i)}) + Z.
	\end{aligned}
	$$
Moreover, $w_{\beta(j) \rightarrow n}^{\kappa}$ is $\mathscr{D}_{2}$-witnessed by $\phi_{+}$. Lemma \ref{lem:transTech} then tells us that $w_{\beta(j) \rightarrow n}^{\kappa} w_{0 \rightarrow \alpha(i)}^{\kappa} \phi_{\pm}$ is $\mathscr{D}_{4}$-witnessed by $\phi_{+}$. Also note that $w_{\beta(j) \rightarrow n}^{\kappa} = w_{\beta(j) \rightarrow n}^{\sigma}$ and $w_{0 \rightarrow \alpha(i)}^{\kappa} = w_{0 \rightarrow \alpha(i)}^{\sigma}$.

At this moment, if $(\phi_{i}^{\kappa})^{-1} w_{\alpha(i) \rightarrow 0}^{\kappa} w_{n \rightarrow \beta(j)}^{\kappa}$ is $\mathscr{D}_{2}$-witnessed by $(\phi_{i}^{\kappa})^{-1}$, then \[\begin{aligned}
& \quad \,\,(w_{\beta(j) \rightarrow n}^{\kappa} w_{0 \rightarrow \beta(i)}^{\kappa}\quad \,,w_{\beta(i) \rightarrow \beta(j)}^{\kappa}\, ,w_{\beta(j) \rightarrow n}^{\kappa} w_{0 \rightarrow \beta(i)}^{\kappa}\quad \, , \w_{\beta(i) \rightarrow \beta(j)}^{\kappa}\, ,\ldots) \\
&= (w_{\beta(j) \rightarrow n}^{\kappa} w_{0 \rightarrow \alpha(i)}^{\kappa} \phi_{i}^{\kappa}, w_{\beta(i) \rightarrow \beta(j)}^{\kappa}\, ,w_{\beta(j) \rightarrow n}^{\kappa} w_{0 \rightarrow \alpha(i)}^{\kappa} \phi_{i}^{\kappa}, w_{\beta(i) \rightarrow \beta(j)}^{\kappa}\, , \ldots)
\end{aligned}
\]
is $\mathscr{D}_{4}$-marked with repulsive sequences \[
(\phi_{+}, \phi_{+}, \ldots), \,\,(\phi_{i}^{\kappa}, \phi_{+}, \phi_{i}^{\kappa}, \phi_{+}, \ldots).
\]
Corollary \ref{cor:severalDirection} then tells us that $(v^{i}x_{0}, v^{k} x_{0})_{v^{j} x_{0}} \le \mathscr{D}_{6}$ for $v=w_{\beta(j) \rightarrow n}^{\kappa}w_{0 \rightarrow \beta(j)}^{\kappa}$ and $i < j < k$. This implies
\[
\begin{aligned}
\tau(w_{n}^{\kappa}) &= \tau((w_{\beta(j)}^{\kappa})^{-1} w_{n}^{\kappa} w_{\beta(j)}^{\kappa}) \\
&= \lim_{m \rightarrow \infty} \frac{1}{m} d(x_{0}, v^{m} x_{0}) \ge d(x_{0}, vx_{0}) - 2\mathscr{D}_{6} \\
&\ge d(x_{0}, w_{\beta(j) \rightarrow n}^{\kappa} x_{0}) - d(w_{\beta(j) \rightarrow n}^{\kappa}w_{0 \rightarrow \beta(j)}^{\kappa} x_{0}, w_{\beta(j) \rightarrow n}^{\kappa} x_{0}) - 2\mathscr{D}_{6}\\
&\ge d(x_{0}, x_{n}^{\kappa}) - 2d(x_{0}, x_{\beta(j)}^{\kappa}) - 2\mathscr{D}_{6} \\
&\ge 2(D+ 0.95 \eta/M) n - 2\mathscr{D}_{6},
\end{aligned}
\]
which contradicts the fact that $\tau(w_{n}^{\kappa}) \le (2D - 2\eta/M) n$.

Hence, $(\phi_{i}^{\kappa})^{-1} w_{\alpha(i) \rightarrow 0}^{\kappa} w_{n \rightarrow \beta(j)}^{\kappa}$ is not  $\mathscr{D}_{2}$-witnessed by $(\phi_{i}^{\kappa})^{-1}$. Instead, by Lemma \ref{lem:transTech}, $(\phi_{i}^{\sigma})^{-1} w_{\alpha(i) \rightarrow 0}^{\kappa} w_{n \rightarrow \beta(j)}^{\kappa}$ is $\mathscr{D}_{2}$-witnessed by $(\phi_{i}^{\sigma})^{-1}$. Then the above argument tells us that $\tau(w_{n}^{\sigma}) \ge (2D + 0.9\eta/M) n$. 
\end{proof}

In particular, for $\vec{w} \in G_{n, f}(Q)$, $\vec{w}^{\sigma} \notin G_{n}$ for any nontrivial $\sigma$. We now observe an analogue of Lemma \ref{claim:step2}.

\begin{lem}\label{claim:step2Teich}
Let  $Q=1$ and $n$ be a sufficiently large integer. Suppose that $\vec{w}, \vec{w}' \in G_{n, f}(Q=1)$ and the number of pivots $N(\vec{w})$, $N(\vec{w}')$ are $\lfloor \frac{\eta n}{40 M^{2}L}\rfloor$. Then for $\sigma, \sigma' \in \{0, 1\}^{N}$, $\vec{w}^{\sigma} = \vec{w}'^{\sigma'}$ if and only if $\vec{w}= \vec{w}'$ and $\sigma = \sigma'$.
\end{lem}

\begin{proof}

Let $\vec{v} = \vec{w}^{\sigma} = \vec{w}'^{\sigma'}$. As before, note that $$\mathcal{N}_{f}(\vec{w}; Q=1) \subseteq \mathcal{N}(\vec{w}) \subseteq \mathcal{N}(\vec{v}=\vec{w}^{\sigma}).$$ 
Moreover, for each $n_{i} \in \mathcal{N}_{f}(\vec{w}; Q=1)$ we have \[\begin{aligned}
d(x_{0}, x_{\beta_{i}'}^{\sigma})&\le d(x_{0}, x_{\beta_{i}'}) + \frac{\eta n}{40M}\\
&\le \frac{1}{2} d(x_{0}, x_{n})+ \frac{\eta n}{40M}- \left(D + \frac{\eta}{M}\right)n \\
&\le \frac{1}{2} d(x_{0}, x_{n}^{\sigma}) + \frac{3\eta n}{80M} - \left(D + \frac{\eta}{M}\right)n\\
&\le \frac{1}{2} d(x_{0}, x_{n}^{\sigma})- \left(D + \frac{0.9\eta}{M}\right)n
\end{aligned}
\]
from Claim \ref{claim:absolutediff}. Thus $n_{i} \in \mathcal{N}_{f}(\vec{v}; Q = 0.9)$. It follows that $$\mathcal{N}_{f}(\vec{w}; Q=1) \subseteq \mathcal{N}_{f}(\vec{v}; Q = 0.9)\quad \mbox{and} \quad  \vec{v} \in F_{n, f}(Q=0.9).$$ Similarly, $\mathcal{N}_{f}(\vec{w}'; Q=1) \subseteq \mathcal{N}_{f} (\vec{v}; Q=0.9)$.

Thus, we are able to pick forward pivots $p_{i}(\vec{w})$ and $p'_{i}(\vec{w}')$ of $\vec{w}$ and $\vec{w}'$ altogether for $\vec{v}$. (This will give $N(\vec{v}) \le \frac{\eta n}{20 M^{2}L}$ so we are safe.) Then Lemma \ref{claim:step1Teich} applied to $\vec{v} \in F_{n, f}(Q=0.9)$ yields a contradiction with $\vec{w}, \vec{w}' \in G_{n}$ unless $\sigma = \sigma'$.
\end{proof}

We are now ready to prove the second main theorem, Theorem \ref{thm:B}:

\mainthmTeich*

\begin{proof}
Lemma \ref{claim:step2Teich} tells us that $\Prob((\w_{i})_{i=1}^{n} \in G_{n, f}(Q=1)) \le P^{n}$ for sufficiently large $n$. Similarly we obtain $\Prob((\w_{i})_{i=1}^{n} \in G_{n, b}(Q=1)) \le P^{n}$. Since $G_{n, f}(Q=1) \cup G_{n, b}(Q=1) = G_{n}$, we deduce that $\Prob((\w_i)_{i=1}^n \in G_{n}) \le 2P^{n}$ for sufficiently large $n$. Thus, by the Borel-Cantelli lemma, almost every path avoids $G_{n}$ eventually. If $\w$ avoids $G_{n}$ eventually but $\tau(\w_{n}) \le (2D - 2\eta/M) n$ infinitely often, then $\w_{n}$ avoids $F_{n}$ infinitely often. However, since $W_{n}^{n} \ge W_{\lfloor n/3L\rfloor}$, the subadditive ergodic theorem implies that such path constitutes a measure zero set.

We now investigate the second assertion of Theorem \ref{thm:B}. When $\mu$ has finite first moment, the drift $\lambda$ of the random walk $\w$ is finite and strictly positive. Here, the strict positivity of $\lambda$ follows from the non-triviality of the Poisson boundary of non-elementary random walks on Teichm{\"u}ller space (cf. \cite[Theorem 2.3.2]{kaimanovich1996poisson}). As in the proof of the second assertion of Theorem \ref{thm:A}, we define the following set for $0<\epsilon < 1$ and $D = 0.5(1-\epsilon/2)\lambda$: $$\begin{aligned}
F_{n} & := \left\{ \vec{w} : \begin{matrix}
\# \mathcal{N}_{\lfloor \epsilon n/7 \rfloor, n}(\vec{w}) \ge {\eta \epsilon n \over 24L}, \,\,d(x_{0}, w_{n} x_{0}) \ge \left(1-\frac{\epsilon}{1000}\right) \lambda n,\\
d(x_{0}, w_{\lfloor \epsilon n / 7\rfloor} x_{0}) \le \frac{\epsilon \lambda n}{6} \end{matrix} \right\}\\
G_{n} & := \left\{ \vec{w} \in F_{n}: \tau(w_{n}) \le \left(2D - \frac{2\eta}{M} \right) n \right\}.
\end{aligned}$$

By the subadditive ergodic theorem, a.e. sample path $\w$ belongs to $F_{n}$ for all sufficiently large $n$. We then require $M > \frac{2000 \eta}{\lambda \epsilon} + {2000\eta \over \epsilon} + {2000 \over \epsilon} + 1$ and $0.9 \le Q \le 1$. Given $\vec{w} \in F_{n}$, we observe as before that \[
d(x_{0}, x_{n_{i}- L}) \le d(x_{0}, w_{\lfloor \epsilon n/7 \rfloor} x_{0}) \le \frac{1}{2}d(x_{0}, w_{n} x_{0}) - \left( D + \frac{\eta}{M}\right) n
\] 
for each $n_{i} \in \mathcal{N}_{\lfloor \epsilon n/7 \rfloor, n} (\vec{w})$. This implies that $\mathcal{N}_{\lfloor \epsilon n/7 \rfloor , n} (\vec{w}) \subseteq \mathcal{N}_{f}(\vec{w})$ and $\vec{w} \in F_{n, f}$. In other words, $F_{n, f}$ already covers $F_{n}$, which is stronger than what we hope in Lemma \ref{lem:setsizeTeich}. 

Given this, Lemma \ref{lem:wellSeveredPivotTeich}, Lemma \ref{claim:step1Teich} and Lemma \ref{claim:step2Teich} still hold true. Using them, we deduce that almost every path $\w$ does not fall into $G_{n}(Q=1)$ infinitely often. Note also that \[
2D - \frac{2 \eta}{M} \ge( 1-\epsilon) \lambda.
\]
Together with the observation that a.e. path does not avoid $F_{n}$ infinitely often, we deduce that $\liminf \frac{1}{n}\tau(\w_{n}) \ge (1-\epsilon) \lambda$ almost surely as desired.
\end{proof}

%
%

\medskip
\bibliographystyle{alpha}
\bibliography{translation_2}

\end{document}